\setlist[enumerate]{leftmargin=*}
\title[The Hausdorff--Young inequality]{The Hausdorff--Young inequality on Lie groups}
\author[M. G. Cowling]{Michael G. Cowling}
\address[M. G. Cowling]{School of Mathematics and Statistics \\ University of New South Wales \\ Sydney NSW 2052 \\ Australia}
\email{m.cowling@unsw.edu.au }
\author[A. Martini]{Alessio Martini}
\address[A. Martini]{School of Mathematics \\ University of Birmingham \\ Edgbaston \\ Birmingham B15 2TT \\ United Kingdom}
\email{a.martini@bham.ac.uk }
\author[D. M\"uller]{Detlef M\"uller}
\address[D. M\"uller]{Mathematisches Seminar \\ Christian-Albrechts-Universit\"at zu Kiel \\ Ludewyg--Meyn-Str.\ 4 \\ D-24118 Kiel \\ Germany}
\email{mueller@math.uni-kiel.de }
\author[J. Parcet]{Javier Parcet}
\address[J. Parcet]{Instituto de Ciencias Matem\'aticas \\ Consejo Superior de Investigaciones Cient\'ificas \\ C/ Nicol\'as Cabrera 13-15 \\ Madrid \\ Spain}
\email{javier.parcet@icmat.es}
\subjclass[2010]{22E30, 43A15, 43A30}
\keywords{best constant, Fourier transform, Hausdorff--Young inequality, Lie group}
\thanks{The first-named author was supported by the Australian Research Council (grant DP170103025). The second-named author was supported by the Engineering and Physical Sciences Research Council (grant EP/P002447/1). The fourth-named author was supported by the Europa Excelencia Grant MTM2016-81700-ERC and the CSIC Grant PIE-201650E030.}
\newtheorem{theorem}{Theorem}[section]
\newtheorem{corollary}[theorem]{Corollary}
\newtheorem{lemma}[theorem]{Lemma}
\newtheorem{proposition}[theorem]{Proposition}
\theoremstyle{definition}
\newenvironment{remark}
  {\pushQED{\qed}\remarkx}
  {\popQED\endremarkx}
\numberwithin{equation}{section}
\newcommand{\R}{\mathbb{R}}
\newcommand{\C}{\mathbb{C}}
\newcommand{\Z}{\mathbb{Z}}
\newcommand{\N}{\mathbb{N}}
\newcommand{\T}{\mathbb{T}}  
\newcommand{\Heis}{\mathbb{H}}  
\newcommand{\Four}{\mathcal{F}}    
\newcommand{\Hilb}{\mathcal{H}}    
\newcommand{\Lin}{\mathcal{L}}   
\newcommand{\tc}{\,:\,}
\newcommand{\pg}{pg} 
\newcommand{\unit}{\mathrm{u}}   
\newcommand{\loc}{\mathrm{loc}}
\newcommand\group[1]{\mathrm{#1}}
\newcommand\Lie[1]{\mathfrak{#1}}
\newcommand{\Sch}{\mathcal{S}}      
\newcommand{\HS}{\mathrm{HS}}       
\newcommand{\Cv}{\mathrm{Cv}}                
\newcommand{\VN}{\mathrm{VN}}                
\newcommand{\chrfn}{\mathbf{1}}        
\DeclareMathOperator{\Inn}{Inn} 
\newcommand{\Ad}{\operatorname{Ad}}     
\DeclareMathOperator{\supp}{supp}    
\DeclareMathOperator{\trace}{trace}  
\DeclareMathOperator*{\esssup}{ess\,sup}
\DeclareFontFamily{U}{mathb}{\hyphenchar\font45}
\DeclareFontShape{U}{mathb}{m}{n}{
      <5> <6> <7> <8> <9> <10> gen * mathb
      <10.95> mathb10 <12> <14.4> <17.28> <20.74> <24.88> mathb12
      }{}
\DeclareSymbolFont{mathb}{U}{mathb}{m}{n}
\DeclareMathSymbol{\bigast}        {1}{mathb}{"06}
\begin{document}

\begin{abstract}
We prove several results about the best constants in the Hausdorff--Young inequality for noncommutative groups.
In particular, we establish a sharp local central version for compact Lie groups, and extend known results for the Heisenberg group.
In addition, we prove a universal lower bound to the best constant for general Lie groups.
\end{abstract}
\maketitle

\section{Introduction}

For $f \in L^1(\R^n)$, define the Fourier transform $\hat f$ of $f$ by
\[
\hat f(\xi) = \int_{\R^n} f(x) \, e^{2\pi i \xi \cdot x} \, dx \qquad\forall \xi \in \R^n.
\]
Then the Riemann--Lebesgue lemma states that $\hat f \in C_0(\R^n)$ and
\[
\| \hat f \|_\infty \leq \| f\|_1.
\]
Further, the Plancherel theorem entails that if $f \in L^2(\R^n)$, then
\[
\| \hat f \|_2  =  \| f \|_2.
\]
Suppose that $1 \leq p \leq 2$ and $p'$ is the conjugate exponent to $p$, that is, $1/p' = 1 - 1/p$.
Then interpolation implies the Hausdorff--Young inequality, namely,
\begin{equation}\label{eq:HY}
\| \hat f \|_{p'} \leq C \|f\|_{p}
\end{equation}
for all $f \in L^p(\R^n)$, where $C \leq 1$. We denote the best constant for this inequality, that is, the smallest possible value of $C$, by $H_p(\R^n)$.
This was found many years after the original result.
We define the Babenko--Beckner constant $B_p$ by
\[
B_p =   \frac{ p^{1/2p} }{ (p')^{ 1/2p' } }.
\]
Then $B_p < 1$ when $1 < p < 2$.

\begin{theorem}[Babenko \cite{Bab-61}, Beckner \cite{Bec-1975}]
For all $p \in [1,2]$,
\[
H_p(\R^n) = (B_p)^n.
\]
\end{theorem}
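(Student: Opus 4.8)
The statement combines an upper bound, $H_p(\R^n)\le(B_p)^n$, with a matching lower bound; the former is the substantial half. The lower bound I would obtain by testing the Hausdorff--Young inequality \eqref{eq:HY} on the Gaussians $g_A(x)=e^{-\pi A|x|^2}$ with $A>0$. Since $\hat g_A(\xi)=A^{-n/2}e^{-\pi|\xi|^2/A}$, evaluating the relevant Gaussian integrals gives
\[
\|g_A\|_p=(Ap)^{-n/(2p)},\qquad \|\hat g_A\|_{p'}=A^{-n/(2p)}\,(p')^{-n/(2p')},
\]
so that $\|\hat g_A\|_{p'}/\|g_A\|_p=p^{n/(2p)}(p')^{-n/(2p')}=(B_p)^n$ for every $A$; hence $H_p(\R^n)\ge(B_p)^n$, the value being attained. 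This also settles the endpoints $p=1,2$, where $B_p=1$ and the claim reduces respectively to the Riemann--Lebesgue lemma and to Plancherel's theorem, so from now on $1<p<2$.

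Next I would reduce to dimension one. If $f=f_1\otimes\cdots\otimes f_n$ then $\hat f=\hat f_1\otimes\cdots\otimes\hat f_n$, and the $L^p$ and $L^{p'}$ norms factorise, whence $\|\hat f\|_{p'}\le H_p(\R)^n\|f\|_p$; since such tensor products are dense in $L^p(\R^n)$, this yields $H_p(\R^n)\le H_p(\R)^n$. Combined with the previous paragraph, $(B_p)^n\le H_p(\R^n)\le H_p(\R)^n$, so it remains only to prove $H_p(\R)\le B_p$.

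For this I would follow Beckner's argument \cite{Bec-1975}, which transfers a sharp inequality on the two-point space to the line by means of the central limit theorem. The core ingredient is the \emph{two-point inequality}: for all $a,b\in\C$,
\[
\left(\frac{|a+i\sqrt{p-1}\,b|^{p'}+|a-i\sqrt{p-1}\,b|^{p'}}{2}\right)^{\!1/p'}\le\left(\frac{|a+b|^{p}+|a-b|^{p}}{2}\right)^{\!1/p},
\]
whose multiplier $i\sqrt{p-1}$ has modulus exceeding the real hypercontractivity threshold $p-1$, so that it does not follow from Nelson's inequality and must be proved by hand. This inequality tensorises: for every $F\colon\{-1,1\}^N\to\C$, with normalised counting measure, the transform $T_N$ that multiplies the degree-$k$ homogeneous part of $F$ by $(i\sqrt{p-1})^k$ satisfies $\|T_NF\|_{p'}\le\|F\|_p$. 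Applying this to functions of the rescaled sum $(\epsilon_1+\cdots+\epsilon_N)/\sqrt N$ and letting $N\to\infty$, the central limit theorem (Krawtchouk polynomials converging to Hermite polynomials) turns it into a sharp inequality, on $\R$ equipped with the standard Gaussian measure, for the operator diagonalised by the Hermite polynomials with eigenvalues $(i\sqrt{p-1})^k$. As the Euclidean Fourier transform is likewise diagonalised, by the Hermite functions, with eigenvalues $i^k$, a standard change of variables relating Hermite polynomials to Hermite functions turns the Gaussian-measure inequality into $\|\hat g\|_{p'}\le B_p\|g\|_p$ for $g$ in a dense subspace of $L^p(\R)$, hence for all $g\in L^p(\R)$.

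The main obstacle is the two-point inequality itself: although it involves only a handful of real parameters, its proof is genuinely delicate --- one normalises $|a|=1$, reduces to a single-variable estimate, and must exploit precisely the interplay of $\sqrt{p-1}$, $p$ and $p'$ that makes the bound sharp --- and it is there that the constant $B_p$ originates. Making the central-limit passage and the identification of the limiting operator rigorous is a further, more routine, task. Two alternative routes are worth noting: when $p'$ is an even integer the inequality reduces, following Babenko \cite{Bab-61}, to the sharp Young convolution inequality; and, more generally, Lieb's theorem that operators with Gaussian integral kernels admit only Gaussian maximisers applies to the Hausdorff--Young functional and, combined with the explicit Gaussian ratio computed above, identifies $H_p(\R^n)$ with $(B_p)^n$ directly.
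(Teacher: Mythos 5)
The paper gives no proof of this theorem: it is quoted from Babenko \cite{Bab-61} and Beckner \cite{Bec-1975} (with \cite{lieb_1990} cited for an alternative proof), so there is no internal argument to compare with, and your outline follows exactly the classical route of those references. Within that outline, the Gaussian computation giving $H_p(\R^n)\geq (B_p)^n$ is correct and complete, and the endpoints $p=1,2$ are fine. But one intermediate step is flawed as written: from the inequality $\|\hat f\|_{p'}\leq H_p(\R)^n\|f\|_p$ for \emph{pure} tensor products $f_1\otimes\cdots\otimes f_n$ you cannot conclude the bound for all $f\in L^p(\R^n)$ ``by density'', because pure tensors do not form a linear subspace (only their span is dense), and a norm inequality does not pass from a generating set to its span --- this is precisely the nontrivial issue of tensorisation of $L^p\to L^{p'}$ operator norms. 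The correct argument writes the $n$-dimensional transform as a composition of partial one-dimensional transforms and uses Minkowski's integral inequality, which applies exactly because $p'\geq p$; equivalently, one invokes the standard fact that $\|T_1\otimes T_2\|_{L^p\to L^{q}}\leq\|T_1\|\,\|T_2\|$ when $q\geq p$.

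More importantly, the heart of the theorem --- the bound $H_p(\R)\leq B_p$ for $1<p<2$ --- is not actually proved: you state Beckner's two-point inequality but explicitly defer its proof, and the tensorisation on the discrete cube, the central-limit passage to the Hermite semigroup, and the renormalisation identifying the Fourier transform are described only in outline. Since the two-point inequality is where the constant $B_p$ and essentially all the analytic difficulty reside (as you note, the multiplier $i\sqrt{p-1}$ lies beyond the real hypercontractive range, so the estimate must be established directly), what you have is a correct road map of the known proof rather than a proof. The alternative routes you mention are sound but have the same character: for $p'\in2\Z$ the reduction to a sharp convolution inequality is the same mechanism as Proposition \ref{prop:basic}\ref{en:yheq} of the paper (though historically Babenko argued differently, the sharp Young inequality being later), and Lieb's Gaussian-maximiser theorem combined with your Gaussian ratio does identify $H_p(\R^n)$ --- but each again outsources the decisive estimate to the cited literature.
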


Babenko treated the case where $p' \in 2 \Z$, and Beckner proved the general case.
The extremal functions are gaussians; see \cite{lieb_1990} for an alternative proof.

One can extend the Babenko--Beckner theorem to more general contexts than $\R^n$, such as locally compact abelian groups $G$.
For instance, the best constant $H_p(G)$ for the inequality \eqref{eq:HY} when $G = \R^a \times \T^b \times \Z^c$ is $(B_p)^a$.
The extremal functions are of the form $\gamma \otimes \chi \otimes \delta$, where $\gamma$ is a gaussian on $\R^a$,  $\chi$ is a character of $\T^b$, and $\delta$ is the characteristic function of a point in $\Z^c$.

For nonabelian groups, matters are more complicated, in part because the interpretation of the $L^{q}$ norm of the Fourier transform for $q \in (2,\infty)$ is trickier. We refer the reader to Section \ref{s:FLq} below for details. General versions of the Hausdorff--Young inequality \eqref{eq:HY} were obtained by Kunze \cite{Kun-1958} and Terp \cite{terp_1980} for arbitrary locally compact groups $G$, and a number of works in the literature are devoted to the study of the corresponding best constants $H_p(G)$. It is known, at least in the unimodular case, that $H_p(G)<1$ for $p \in (1,2)$ if and only if $G$ has no compact open subgroups \cite{Russo-1974,fournier_1977}. On the other hand, when $H_p(G)$ is not $1$, its value is known only in few cases, and typically only for exponents $p$ whose conjugate exponent is an even integer; in addition, as shown by Klein and Russo, extremal functions need not exist \cite{KR-1978}.

Recently, various authors considered local versions of the Hausdorff--Young inequality. Namely, for each neighbourhood $U$ of the identity $e \in G$, define $H_p(G;U)$ as the best constant in the inequality \eqref{eq:HY} with the additional support constraint $\supp f \subseteq U$, and let $H_p^\loc(G)$ be the infimum of the constants $H_p(G;U)$. Clearly $H_p^\loc(G) \leq H_p(G)$, and equality holds whenever $G$ has a contractive automorphism. For other groups, however, the inequality may be strict, which makes the study of $H_p^\loc(G)$ interesting also for groups where $H_p(G) = 1$, such as compact groups. Indeed, in the case of the torus $G = \T^n$, the value of $H_p^\loc(G)$ is known and is strictly less than $1$ for $p \in (1,2)$.

\begin{theorem}[Andersson \cite{And-PhD,And-1994}, Sj\"olin \cite{Sjo-1995}, Kamaly \cite{Kam-2000}]\label{thm:local-HY-Tn}
For all $p \in [1,2]$,
\[
H_p^\loc(\T^n) = (B_p)^n.
\]
\end{theorem}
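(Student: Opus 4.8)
The plan is to deduce the identity $H_p^\loc(\T^n) = (B_p)^n$ from the Euclidean Babenko--Beckner theorem, exploiting the fact that a small neighbourhood of $e$ in $\T^n = \R^n/\Z^n$ is isometric to a cube in $\R^n$. The interesting range is $p \in (1,2)$, hence $p' \in (2,\infty)$: when $p=1$ or $p=2$ one has $B_1 = B_2 = 1$ and the inequality holds with constant $1$ by the Riemann--Lebesgue lemma and by Parseval's identity. Fix $\delta \in (0,1)$ and write $Q_\delta = (-\delta/2,\delta/2)^n$; identifying $Q_\delta$ with its image in $\T^n$, a function $f$ with $\supp f \subseteq Q_\delta$ can be viewed simultaneously on $\T^n$ and on $\R^n$, and then $\|f\|_{L^p(\T^n)} = \|f\|_{L^p(\R^n)}$, while the toral Fourier coefficients of $f$ at $k \in \Z^n$ are exactly the values $F(k)$ of the Euclidean Fourier transform $F := \hat f_{\R^n}$ at the integer lattice. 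Note that $\widehat{F}(\xi) = f(-\xi)$ is supported in $Q_\delta$. So, for $f$ supported in $Q_\delta$, the toral Hausdorff--Young inequality amounts to comparing $\|\{F(k)\}_{k\in\Z^n}\|_{\ell^{p'}(\Z^n)}$ with $\|f\|_{L^p(\R^n)}$, and in view of the Babenko--Beckner theorem it suffices to compare $\|\{F(k)\}_{k\in\Z^n}\|_{\ell^{p'}(\Z^n)}$ with $\|F\|_{L^{p'}(\R^n)}$.

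For the upper bound $H_p^\loc(\T^n) \le (B_p)^n$, the core is a sampling estimate: given $\varepsilon>0$, there is $\delta>0$ such that $\|\{F(k)\}_{k\in\Z^n}\|_{\ell^{p'}(\Z^n)} \le (1+\varepsilon)\,\|F\|_{L^{p'}(\R^n)}$ whenever $\widehat F$ is supported in $Q_\delta$. When $p'$ is an even integer $2m$ and $\delta$ is small enough (depending on $p'$), this is even an exact equality: $|F|^{p'} = F^m \overline{F}^m$ is then band-limited to $Q_{p'\delta} \subseteq (-1/2,1/2)^n$, so $\widehat{|F|^{p'}}$ vanishes on $\Z^n\setminus\{0\}$ and the Poisson summation formula gives $\sum_{k\in\Z^n}|F(k)|^{p'} = \int_{\R^n}|F|^{p'}$. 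For general $p' \in (2,\infty)$ one uses that, since $\widehat F$ lives in the small cube $Q_\delta$, the function $F$ is slowly varying: Bernstein's inequality gives $\|\nabla F\|_{p'} \le C_n\delta\|F\|_{p'}$, and comparing $|F(k)|^{p'}$ with the mean of $|F|^{p'}$ over $k+(-1/2,1/2)^n$ — via an elementary convexity inequality together with a Nikol'skii / Plancherel--P\'olya bound $\sum_{k}(\sup_{k+Q_1}|\nabla F|)^{p'} \lesssim_n \|\nabla F\|_{p'}^{p'}$ for band-limited $\nabla F$ — yields the estimate with $(1+\varepsilon)^{p'} = 1 + O_{p'}(\delta)$. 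Combining with Babenko--Beckner and a density argument (approximating $f \in L^p(Q_\delta)$ by continuous functions), one gets $\|\{\hat f(k)\}_{k\in\Z^n}\|_{\ell^{p'}(\Z^n)} \le (1+\varepsilon)(B_p)^n\|f\|_{L^p(\T^n)}$ for $f$ supported in $Q_\delta$, hence $H_p^\loc(\T^n) \le (B_p)^n$. I expect this quantitative sampling lemma for non-even $p'$ to be the main obstacle: it is the analogue of the step where, in the Euclidean proof, Babenko's even-integer argument had to be replaced by Beckner's hypercontractivity, and here one must instead control the band-limited behaviour of $F$ with constants tending to $1$.

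For the matching lower bound $H_p^\loc(\T^n) \ge (B_p)^n$, fix an arbitrary neighbourhood $U$ of $e$; it contains some $Q_\delta$. Test the inequality on concentrated Gaussians: let $g_t(x) = e^{-\pi t|x|^2}$, whose Euclidean Fourier transform $G_t(\xi) = t^{-n/2}e^{-\pi|\xi|^2/t}$ is a wide Gaussian, and set $f_t = \varphi\, g_t$ with $\varphi \in C_c^\infty(Q_\delta)$ equal to $1$ near the origin, so that $\supp f_t \subseteq Q_\delta \subseteq U$. As $t \to \infty$ the mass of $g_t$ concentrates at $0$, so $\|f_t\|_{L^p(\T^n)} = \|f_t\|_{L^p(\R^n)} = (1+o(1))\|g_t\|_{L^p(\R^n)}$. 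Moreover $g_t - f_t = (1-\varphi)g_t$ is smooth, supported away from $0$, with all $C^N$-norms exponentially small in $t$, so its Euclidean Fourier transform is $O_N((1+|\xi|)^{-N}e^{-ct})$; hence the toral Fourier coefficients of $f_t$ are $G_t(k) + r_t(k)$ with $\sum_{k\in\Z^n}|r_t(k)|^{p'}$ exponentially small in $t$, while $\|G_t\|_{L^{p'}(\R^n)}$ is only polynomially small. The Jacobi theta identity gives $\sum_{k\in\Z^n}|G_t(k)|^{p'} = (1+o(1))\int_{\R^n}|G_t|^{p'} = (1+o(1))\|G_t\|_{L^{p'}(\R^n)}^{p'}$ as $t\to\infty$, and since Gaussians are extremal for the Euclidean Hausdorff--Young inequality, $\|G_t\|_{L^{p'}(\R^n)} = (B_p)^n\|g_t\|_{L^p(\R^n)}$. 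Combining, $\|\{\hat f_t(k)\}_{k\in\Z^n}\|_{\ell^{p'}(\Z^n)} / \|f_t\|_{L^p(\T^n)} \to (B_p)^n$, whence $H_p(\T^n;U) \ge (B_p)^n$; as $U$ is arbitrary, $H_p^\loc(\T^n) \ge (B_p)^n$, and together with the upper bound this proves the identity.
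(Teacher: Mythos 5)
Your proposal is correct in outline, but it reaches the key upper bound by a genuinely different mechanism than the paper. Both arguments start the same way: a function supported in a small cube is viewed simultaneously on $\T^n$ and $\R^n$, so that the toral Fourier coefficients are the samples $\hat F|_{\Z^n}$ of the Euclidean transform, and everything reduces to comparing $\|\hat F|_{\Z^n}\|_{\ell^{p'}}$ with $\|\hat F\|_{L^{p'}}$ and then invoking Babenko--Beckner. The paper resolves this comparison by interpolation rather than by band-limited analysis: it fixes $\phi \in A(\R^n)$ with $\supp\phi$ in the fundamental domain and $\phi \equiv 1$ on $U$, considers the globally defined operator $TG = (\hat\phi * G)|_{\Z^n}$, checks the endpoint bounds $L^2 \to \ell^2$ and $L^\infty \to \ell^\infty$ with norm $\|\hat\phi\|_1$ (the $L^2$ endpoint using exactly your support observation), applies Riesz--Thorin, and then makes $\|\hat\phi\|_1$ tend to $1$ via the Leptin-type choice $\phi = |K|^{-1}\chrfn_{U+K}*\chrfn_K$; this sidesteps the fact that the raw restriction inequality only holds on the non-dense class of band-limited $\hat F$, which is precisely the obstruction you also identified. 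You instead prove a direct quantitative sampling estimate $\|\hat F|_{\Z^n}\|_{\ell^{p'}} \le (1+O_{p'}(\delta))\|\hat F\|_{L^{p'}}$ (exact, via Poisson summation, when $p' \in 2\Z$ and $p'\delta<1$; via Bernstein plus Plancherel--P\'olya/Peetre maximal bounds in general). That route is viable, but note that your H\"older step needs the Plancherel--P\'olya bound $\sum_k(\sup_{k+Q_1}|F|)^{p'} \lesssim_n \|F\|_{p'}^{p'}$ for $F$ itself in addition to the one you state for $\nabla F$; both are standard for functions with spectrum in a fixed bounded set, so this is a gap in the write-up rather than in the method. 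What each approach buys: the paper's operator-plus-interpolation scheme needs no quantitative band-limited machinery and is exactly the template that generalises to the central case on compact Lie groups in Section 3 (where a direct sampling estimate would be much harder to run), whereas your argument is more elementary and gives explicit $1+O(\delta)$ rates. For the lower bound, the paper simply quotes the general transplantation result (Theorem \ref{thm:local-HY}), valid for every Lie group; your truncated-Gaussian computation with the theta-function asymptotics is correct and self-contained, but is specific to the abelian setting.
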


Here we are interested in analogues of the above result for noncommutative Lie groups $G$.
We also study what happens when 
additional symmetries are imposed
by restricting
to functions $f$ on $G$ which are invariant under a compact group $K$ of automorphisms of $G$. Let us denote by $H_{p,K}(G)$ and $H_{p,K}^\loc(G)$ the corresponding global and local best Hausdorff--Young constants.
Note that the original constants $H_p(G)$ and $H_{p}^\loc(G)$ correspond to the case where $K$ is trivial. When $K$ is nontrivial, \emph{a priori} the new constants $H_{p,K}(G)$ and $H_{p,K}^\loc(G)$ might be smaller. However we can prove a universal lower bound, which is independent of the symmetry group $K$ and depends only on $p$ and the dimension of $G$.

\begin{theorem}\label{thm:local-HY}
Let $G$ be a Lie group and $K$ be a compact group of automorphisms of $G$. For all $p \in [1,2]$,
\[
H_{p,K}^\loc(G) \geq (B_p)^{\dim(G)}.
\]
\end{theorem}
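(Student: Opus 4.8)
The plan is to prove, for every neighbourhood $U$ of the identity $e$, the inequality $H_{p,K}(G;U)\geq(B_p)^{\dim(G)}$; since $H_{p,K}^\loc(G)=\inf_U H_{p,K}(G;U)$ this is exactly the assertion (the cases $p=1,2$ being elementary, so assume $p\in(1,2)$). As $H_{p,K}(G;U)$ is non-increasing in $U$, it suffices to treat small $U$; and since the action map $K\times G\to G$ is continuous, we may moreover assume that $U$ is $K$-invariant and contained in an open ball on which $\exp\colon\Lie{g}\to G$ is a diffeomorphism. Fix once and for all a $K$-invariant inner product $\langle\cdot,\cdot\rangle$ on $\Lie{g}$ (average an arbitrary one over the compact linear group $K$), put $n=\dim(G)$, and transfer Haar measure to $\Lie{g}$ by $\exp$, so that $dg=J(X)\,dX$ near $0$ with $J(0)=1$. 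The goal becomes: given such $U$ and $\varepsilon>0$, exhibit a $K$-invariant function $f$ on $G$ with $\supp f\subseteq U$ and $\|\widehat f\|_{p'}\geq((B_p)^n-\varepsilon)\|f\|_p$.

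For the test functions I take rescaled Gaussians in exponential coordinates. Fix a $K$-invariant $\chi\in C^\infty_c(\Lie{g})$ with $\chi\equiv 1$ near $0$ and $\supp\chi\subseteq\exp^{-1}(U)$, and set $f_t(\exp X)=\chi(X)\,e^{-\pi t^{-2}\langle X,X\rangle}$ for $t>0$. Since $\langle\cdot,\cdot\rangle$ is $K$-invariant and $\exp$ intertwines the $K$-action on $G$ with the linear $K$-action on $\Lie{g}$, each $f_t$ is $K$-invariant, and $\supp f_t\subseteq U$. By the duality $L^{p'}(\VN(G))=(L^p(\VN(G)))^*$ and the Parseval identity $\langle\widehat f,\widehat h\rangle=(f*h)(e)$ (Section~\ref{s:FLq}; for non-unimodular $G$ a modular factor intervenes, but $\Delta_G(e)=1$ makes it harmless at this scale), one has for every $h\in C_c(G)$
\[
\|\widehat{f_t}\|_{p'}\ \geq\ \frac{\bigl|(f_t*h)(e)\bigr|}{\|\widehat h\|_{L^p(\VN(G))}} .
\]
I apply this with $h=h_t$ the transported Euclidean dual extremiser, namely the Gaussian $h_t(\exp X)=\chi(X)\,e^{-\pi (p'-1)^{-1}t^{-2}\langle X,X\rangle}$ (equivalently, and technically a shade more convenient, the heat kernel at time $\asymp t^2$ of a fixed left-invariant Laplacian adapted to $\langle\cdot,\cdot\rangle$, which agrees with this Gaussian to leading order).

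Everything now comes down to two statements about the limit $t\to 0$. The first is elementary: as the convolution is evaluated at $e$, only $g\mapsto g^{-1}=\exp(-X)$ enters, so $(f_t*h_t)(e)=\int_{\Lie{g}}f_t(\exp X)\,h_t(\exp X)\,J(X)\,dX$ — no Baker--Campbell--Hausdorff expansion of the product is required — and this is $(1+o(1))$ times the Euclidean integral $\int_{\R^n}g_t(X)\,\widetilde h_t(X)\,dX$, because the increasingly concentrated integrand only sees $J\to J(0)=1$; likewise $\|f_t\|_{L^p(G)}=(1+o(1))\|g_t\|_{L^p(\R^n)}$, where $g_t$ and $\widetilde h_t$ denote $f_t$ and $h_t$ read off in the coordinate $X$. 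The second, which is the heart of the matter, is that the noncommutative norm of the rescaled group Fourier transform is asymptotically no larger than its Euclidean model:
\[
\|\widehat{h_t}\|_{L^p(\VN(G))}\ \leq\ (1+o(1))\,\|\widehat{\widetilde h_t}\|_{L^p(\R^n)} .
\]
Granting these, and recalling that $\widetilde h_t$ was chosen so that equality holds in H\"older's inequality for the Euclidean pair $\bigl(\widehat{g_t},\widehat{\widetilde h_t}\bigr)$, the displayed lower bound yields $\|\widehat{f_t}\|_{p'}/\|f_t\|_{L^p(G)}\geq(1-o(1))\,\|\widehat{g_t}\|_{L^{p'}(\R^n)}/\|g_t\|_{L^p(\R^n)}=(1-o(1))(B_p)^n$, the last equality being the elementary computation of the Hausdorff--Young ratio of a Gaussian (the deep content of the Babenko--Beckner theorem — that $(B_p)^n$ is optimal — is not needed here). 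Letting $t\to0$ gives $H_{p,K}(G;U)\geq(B_p)^n$, and since $U$ was an arbitrary small neighbourhood, $H_{p,K}^\loc(G)\geq(B_p)^n$.

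The main obstacle is the second limit statement, where the Lie structure must be used quantitatively. It expresses the principle that the dilated product $(g,h)\mapsto D_{1/t}(D_t g\cdot D_t h)$ on $\Lie{g}$, with $D_t=\exp\circ(X\mapsto tX)\circ\exp^{-1}$, contracts via Baker--Campbell--Hausdorff to the abelian law, so that $\VN(G)$ localised and rescaled by $t$ converges to $L^\infty(\R^n)=\VN(\R^n)$ with matching $L^p$-norms; as Plancherel measure need not depend continuously on such contractions, some care is required. One clean route is to take $h_t$ to be a heat kernel, reducing the estimate to the small-time on-diagonal heat-kernel asymptotics on $G$ (whose leading term is the Euclidean one), together with the routine identification of the heat operator as an element of $L^p(\VN(G))$ — immediate when $G$ is unimodular, and otherwise carried out in the Haagerup $L^p$-space formalism recalled in Section~\ref{s:FLq}. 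The remaining ingredients — the reductions, the $K$-invariance, the irrelevance of the modular function near $e$, and the Euclidean Gaussian bookkeeping — are routine.
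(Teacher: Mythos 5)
Your strategy --- bounding $\|\widehat{f_t}\|_{p'}$ from below by testing the noncommutative duality against an explicit near-extremal pair (a truncated $K$-invariant Gaussian $f_t$ and a heat kernel $h_t$), and reducing the required computation of $\|\widehat{h_t}\|_{L^p(\VN(G))}$ to small-time heat-kernel asymptotics --- is genuinely different from the paper's route, which characterises the $\Four L^{q}$ norm as the $L^1\to L^\infty$ norm of $|L_f\Delta^{1/q}|^{q}$ (Proposition \ref{prp:fouriernorm}) and then transplants that operator from $G$ to $\Lie{g}$ by a blow-up argument using strong resolvent convergence (Lemma \ref{lem:nu_localisation}, Proposition \ref{prop:nu_loc}\ref{en:hyloc}, Remark \ref{rmk:symmetry}). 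For unimodular $G$ your reduction is persuasive: there $\|e^{-s\mathcal{L}}\|_{L^p(\VN(G))}^p=\tau(e^{-ps\mathcal{L}})=h_{ps}(e)$ by positivity and the semigroup law, and the on- and near-diagonal small-time asymptotics you invoke (the latter is what the numerator pairing actually needs) are standard. You are also right that only the elementary Gaussian computation, not the full Babenko--Beckner theorem, is required, and that the dual function $h_t$ need not be $K$-invariant, so the symmetry constraint costs nothing.

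The step you yourself identify as the heart, namely $\|\widehat{h_t}\|_{L^p(\VN(G))}\le(1+o(1))\,\|\widehat{\widetilde h_t}\|_{L^p(\R^n)}$, is however only sketched, and for non-unimodular $G$ (the theorem is for all Lie groups) your treatment of the modular function is a genuine gap. In the Kunze--Terp/Hilsum setting the relevant quantity is $\||L_{h_t}\Delta^{1/p}|^p\|_{L^1_{\VN}}$: the Plancherel weight is not a trace, $\Delta^{1/p}$ is a globally unbounded multiplication operator that does not commute with $L_{h_t}$, and $p\in(1,2)$ forces a genuinely fractional power of this non-commuting product. Moreover the heat kernel is not compactly supported, so the observation that $\Delta(e)=1$ localises only the numerator pairing (where $\supp f_t$ does the work), not this norm; the exponential growth of $\Delta$ against the Gaussian decay of $h_t$ must be beaten quantitatively inside a Haagerup-$L^p$ modulus. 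Controlling precisely such objects is what the paper's Lemma \ref{lem:nu_localisation}\ref{en:nu_assfunccalc}--\ref{en:nu_asspower} is built for, via the conjugated operators $\Delta^{\alpha}L_{(\cdot)^**(\cdot)}\Delta^{\alpha}$, strong resolvent convergence and functional calculus; declaring it ``routine in the Haagerup formalism'' asserts the conclusion rather than proving it. (If you keep the truncated Gaussian instead of the heat kernel, the same norm estimate is needed for it, and the trace/semigroup shortcut is lost even in the unimodular case.) To close the argument you would need either to restrict to unimodular $G$, or to supply a quantitative proof that the modular corrections to the noncommutative $L^p$ norm of the heat operator are $o(1)$ as $t\to0$, for instance along the lines of the paper's transplantation lemma.
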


Recall that a function $f$ on a group $G$ is \emph{central} if $f(xy) = f(yx)$, that is, if $f$ is invariant under the group $\Inn(G)$ of inner automorphisms of $G$.
Garc{\'\i}a-Cuerva, Marco and Parcet \cite{GCMP-2003} and Garc{\'\i}a-Cuerva and Parcet \cite{GCP-2004} studied the Hausdorff--Young inequality for compact semi\-simple Lie groups $G$ restricted to central functions; in particular, they obtained the inequality $H^\loc_{p,\Inn(G)}(G) > 0$, which they applied to answer questions about Fourier type and cotype of operator spaces (see also \cite{Par-2006}). Theorem \ref{thm:local-HY} gives a substantially more precise lower bound to $H^\loc_{p,\Inn(G)}(G)$. As a matter of fact, in this case we can prove that equality holds.

\begin{theorem}\label{thm:local-central-HY-compact-Lie}
Suppose that $G$ is a compact connected Lie group.
Then, for all $p \in [1,2]$,
\[
H_{p,\Inn(G)}^\loc(G) = (B_p)^{\dim(G)}.
\]
\end{theorem}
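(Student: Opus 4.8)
The plan is to show that, as the support shrinks to $\{e\}$, the central Hausdorff--Young problem on $G$ degenerates into the Hausdorff--Young problem on the Lie algebra $\Lie g$ of $G$, regarded as a Euclidean space of dimension $\dim G$; since $H_p(\R^{\dim G}) = (B_p)^{\dim G}$ by the theorem of Babenko \cite{Bab-61} and Beckner \cite{Bec-1975}, this limiting value will be exactly the asserted constant. Set up the usual structure: fix a maximal torus $T$, with Weyl group $W$, positive roots $\Sigma^+$, half sum of positive roots $\rho$, Weyl denominator $\delta=\sum_{w\in W}\det(w)\,e^{w\rho}$, and alternating polynomial $\varpi(H)=\prod_{\alpha\in\Sigma^+}\langle\alpha,H\rangle$ on $\Lie t$, so that $\dim G=\dim T+2|\Sigma^+|$. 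Fix an $\Ad(G)$-invariant inner product on $\Lie g$, so that $\Ad(G)\subseteq O(\Lie g)$ and, near the origin, $\exp\colon\Lie g\to G$ is a diffeomorphism conjugating $\Ad(G)$ to inner automorphisms; thus central functions on $G$ supported near $e$ correspond to $\Ad(G)$-invariant functions on $\Lie g$ supported near $0$. (If $G$ has a torus factor, the analysis restricted to that factor is the torus case of Theorem~\ref{thm:local-HY-Tn}, and the two parts combine.)

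For a central function $f$ supported near $e$, I would first rewrite both sides representation-theoretically. The Weyl integration formula gives $\|f\|_{L^p(G)}^p=|W|^{-1}\int_T|f|^p|\delta|^2$. Since $f$ is central, $\hat f$ is scalar on each irreducible, so the Fourier $L^{p'}$-norm is $\|\hat f\|_{p'}^{p'}=\sum_\lambda d_\lambda^{2-p'}|\langle f,\chi_\lambda\rangle|^{p'}$, the sum running over dominant weights, where $d_\lambda=\varpi(\lambda+\rho)/\varpi(\rho)$ by the Weyl dimension formula and, by the Weyl character formula, $\langle f,\chi_\lambda\rangle=|W|^{-1}\int_T f\,\delta\,\overline{A_{\lambda+\rho}}$ with $A_\mu=\sum_w\det(w)e^{w\mu}$. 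Transporting these integrals to $\Lie t$ via $\exp$, using that $\delta(\exp H)$ is asymptotic to a unimodular constant times $\varpi(H)$ while $A_{\lambda+\rho}(\exp H)=\sum_w\det(w)e^{i\langle w(\lambda+\rho),H\rangle}$ exactly, and performing the change of variables $H\mapsto wH$ that collapses the alternating sum, together with the Harish-Chandra orbital integral formula --- which identifies $\varpi(\xi)$ times the Euclidean Fourier transform of the $\Ad(G)$-invariant lift $\tilde f:=f\circ\exp$, restricted to $\Lie t$, with a constant multiple of the Euclidean Fourier transform on $\Lie t$ of the alternating function $\varpi\cdot(\tilde f|_{\Lie t})$ --- this yields $\langle f,\chi_\lambda\rangle\sim c\,\varpi(\lambda+\rho)\,\widehat{\tilde f}(\lambda+\rho)$ for a constant $c$ depending only on $G$.

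Substituting, and noting that $\varpi(\lambda+\rho)>0$ for dominant $\lambda$, the powers of $\varpi(\lambda+\rho)$ combine to give $\|\hat f\|_{p'}^{p'}\sim\varpi(\rho)^{p'-2}|c|^{p'}\sum_\lambda\varpi(\lambda+\rho)^2|\widehat{\tilde f}(\lambda+\rho)|^{p'}$. Applying the Weyl integration formula on $\Lie g$ twice, to the $\Ad(G)$-invariant functions $|\tilde f|^p$ and $|\widehat{\tilde f}|^{p'}$, and recognising the weighted lattice sum as a Riemann sum for the corresponding Euclidean integral (this is where the support is shrunk), one obtains, uniformly over central $f$ supported in a small neighbourhood $U$ of $e$,
\[
\frac{\|\hat f\|_{p'}}{\|f\|_p}=\bigl(\varkappa(p)+o(1)\bigr)\,\frac{\|\widehat{\tilde f}\|_{L^{p'}(\R^{\dim G})}}{\|\tilde f\|_{L^p(\R^{\dim G})}}
\]
as $U\to\{e\}$, for some $\varkappa(p)>0$ depending only on $G$ and $p$. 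Taking the supremum over $f$, and using on the right the sharp Hausdorff--Young inequality on $\R^{\dim G}$ (centred Gaussians, which are $\Ad(G)$-invariant, being extremisers), gives $H_{p,\Inn(G)}^\loc(G)=\varkappa(p)\,(B_p)^{\dim G}$. It remains to see that $\varkappa(p)\equiv1$: by construction $\varkappa(p)$ is built from fixed normalization constants raised to the powers $1/p$ and $1/p'=1-1/p$, so $\log\varkappa(p)$ is an affine function of $1/p$; and it equals $1$ both at $p=2$, where Hausdorff--Young is the Plancherel identity on $G$ and on $\R^{\dim G}$, and at $p=1$, where $H_{1,\Inn(G)}^\loc(G)=1=(B_1)^{\dim G}$ --- the bound $\leq1$ from $|\chi_\lambda|\leq\chi_\lambda(e)=d_\lambda$, the bound $\geq1$ from a nonnegative central bump. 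An affine function of $1/p$ taking the value $1$ at $1/p=1$ and $1/p=\tfrac12$ is identically $1$, whence $\varkappa\equiv1$ and $H_{p,\Inn(G)}^\loc(G)=(B_p)^{\dim G}$. (As Theorem~\ref{thm:local-HY} already supplies the lower bound, one could instead stop once the uniform upper bound $H_{p,\Inn(G)}^\loc(G)\leq\varkappa(p)(B_p)^{\dim G}$ has been established.)

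The main obstacle is precisely the uniformity of the displayed asymptotic: passing from the discrete sum over the weight lattice to the Euclidean integral uniformly over all central $f$ supported in a shrinking neighbourhood of $e$. Since the Fourier transform of a compactly supported function has unbounded support, the tail of the lattice sum must be controlled; this is the compact Lie group counterpart of the torus case in Theorem~\ref{thm:local-HY-Tn}, and, as there, I expect it to be handled by a rescaling argument that exploits the fact that only the inequality with arbitrarily small loss is needed, first for a dense class of functions and then by approximation. A secondary, purely bookkeeping, difficulty --- keeping track of the normalization constants that enter $\varkappa(p)$ --- is circumvented above by combining the affine-in-$1/p$ structure with the two elementary endpoint exponents.
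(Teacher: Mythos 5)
Your overall philosophy (transfer the central problem on $G$ to the Euclidean problem on $\Lie{g}$ via character formulas, then invoke Babenko--Beckner) is the same as the paper's, but the step you yourself flag as ``the main obstacle'' --- the uniform passage from the weighted lattice sum $\sum_\lambda \varpi(\lambda+\rho)^2|\widehat{\tilde f}(\lambda+\rho)|^{p'}$ to the Euclidean $L^{p'}$ norm with constant $1+o(1)$, uniformly over all central $f$ supported in a shrinking $U$ --- is precisely the heart of the proof, and your proposal does not supply a mechanism for it. A Riemann-sum heuristic cannot be run as stated: what is needed for the upper bound is a \emph{discrete restriction-type inequality}, namely that the lattice sum is dominated by $(1+o(1))\,\|\widehat{\tilde f}\|_{L^{p'}}$ simultaneously for all admissible $f$, and pointwise asymptotics of characters plus ``recognising a Riemann sum'' give error terms that depend on $f$ (through the oscillation and tails of $\widehat{\tilde f}$) with no uniformity established. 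Moreover, as the paper's own revisit of the torus case explains, one cannot obtain this by interpolating the obvious $\ell^2$ and $\ell^\infty$ endpoint bounds, because the $\ell^2$ (Plancherel-based) restriction estimate only holds for transforms of functions supported in the fixed small set $V$, which is not a dense subspace of $L^2$; so the interpolation you are implicitly relying on is not available without an extra device.

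The paper resolves both difficulties at once and more cleanly than your asymptotic bookkeeping. First, instead of the approximations $\delta(\exp H)\sim\mathrm{const}\cdot\varpi(H)$ and the Weyl dimension formula, it uses Kirillov's character formula together with the weight $J^{1/2}$ (Jacobian of $\exp$): setting $F=J^{1/2}\,(f\circ\exp)$, one gets the \emph{exact} identity $\tilde f(\lambda)=d_\lambda\,\hat F(\lambda+\rho)$, so that $d_\lambda^{2-p'}|\tilde f(\lambda)|^{p'}=d_\lambda^{2}|\hat F(\lambda+\rho)|^{p'}$ with no asymptotics and no unknown constant $\varkappa(p)$ to pin down (your affine-in-$1/p$ endpoint trick becomes unnecessary, though it is only as solid as the unproved uniform asymptotic it is attached to). Second, to compare the weighted sum over $\lambda+\rho$ with $\|\hat F\|_{p'}$, it introduces an $\Ad(G)$-invariant $\phi\in A(\Lie{g})$ supported in $V$ and equal to $1$ on a smaller invariant set $W$, and studies the operator $TH(\lambda)=\hat\phi*H^{G}(\lambda+\rho)$ defined on \emph{all} of $L^1+L^\infty(\Lie{g}^*)$; the two endpoint bounds (with weight $d_\lambda^2$ at $q=2$, trivial at $q=\infty$) now hold on the full spaces, Riesz--Thorin applies, and since $T\hat F=\hat F(\cdot+\rho)$ when $\supp f\subseteq\exp(W)$, one obtains $\|f\|_{\Four L^{p'}}\le\|\hat\phi\|_1\,(B_p)^{n}\sup_{W}J^{1/2-1/p}\,\|f\|_p$, with both loss factors tending to $1$ as $W$ shrinks. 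Supplying an argument of this kind (or some other proof of the uniform discrete restriction estimate) is what your proposal is missing; without it the central displayed asymptotic, and hence the conclusion, is not established.
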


Note on the one hand that, in the abelian case $G = \T^n$, all functions are central, so Theorem \ref{thm:local-central-HY-compact-Lie} extends Theorem \ref{thm:local-HY-Tn}. On the other hand, it would be interesting to know whether the result holds also without the restriction to central functions.

More generally, one may ask whether the inequality in Theorem \ref{thm:local-HY} is actually an equality for an arbitrary Lie group $G$. As a matter of fact, the equality
\[
H_{p,K}^\loc(G) = (B_p)^{\dim(G)}
\]
holds for arbitrary $G$ and $K$ whenever $p' \in 2\Z$, as a consequence of a recent result of Bennett, Bez, Buschenhenke, Cowling and Flock \cite{BBBCF_2018}
and the relation between the best constants for the Young and the Hausdorff--Young inequalities (see Proposition \ref{prop:basic} below). In particular, by interpolation,
\[
H_{p,K}^\loc(G) < 1
\]
for all $p \in (1,2)$ and arbitrary $G$ and $K$ with $\dim(G) > 0$. 
Moreover, the equality
\begin{equation}\label{eq:hy_conjecture}
H_{p}(G) = H_p^\loc(G) = (B_p)^{\dim(G)}
\end{equation}
holds when $p' \in 2\Z$ for all Lie groups $G$ with a contractive automorphism (which are nilpotent---see \cite{Siebert-1986}), and also for all solvable Lie groups $G$ admitting a chain of closed subgroups
\[
\{e\} = G_0 < G_1 < \dots < G_{n-1} < G_n = G,
\]
where $G_j$ is normal in $G_{j+1}$ and $G_{j+1} / G_j$ is isomorphic to $\R$ (here $n=\dim(G)$). For many of those groups $G$, the upper bound $H_{p}(G) \leq (B_p)^{\dim(G)}$ for $p'\in 2\Z$ was proved in \cite{KR-1978}, but the question of the lower bound was left open there, except for the Heisenberg groups. Hence Theorem \ref{thm:local-HY} proves the sharpness of a number of results in \cite{KR-1978}.

The Heisenberg groups $\Heis_n$ are among the simplest examples of groups in the above class. Nevertheless, determining the value of $H_{p}(\Heis_n) = H_{p}^\loc(\Heis_n)$ appears to be a nontrivial problem when $p' \notin 2\Z$, and is related to a similar problem for the so-called Weyl transform. Recall that the Weyl transform $\rho$ on $\C^n$ maps functions on $\C^n$ to integral operators on $L^2(\R^n)$ \cite{Folland-1989},
and an inequality of Hausdorff--Young type can be proved for $\rho$  \cite{KR-1978,Russo-1979}:
for all $p \in [1,2]$,
\begin{equation}\label{eq:HY_weyl}
\| \rho(f) \|_{\Sch^{p'}(L^2(\R^n))} \leq C \|f\|_{L^p(\C^{n})},
\end{equation}
where $\Sch^{q}(\Hilb)$ denotes the $q$th Schatten class of operators on the Hilbert space $\Hilb$, and $C \leq 1$. As above, we can define $W_p(\C^n)$ as the best constant in \eqref{eq:HY_weyl}, as well as corresponding local and symmetric versions $W_{p}^\loc(\C^n), W_{p,K}(\C^n), W_{p,K}^\loc(\C^n)$. 
A scaling argument (see Proposition \ref{prp:weylheisenberg} below) then shows that, for all compact subgroups $K$ of the unitary group $\group{U}(n)$,
\begin{equation}\label{eq:HY_heis_weyl}
H_{p,K}(\Heis_n) = B_p \, W_{p,K}(\C^n)
\end{equation}
(here $\group{U}(n)$ acts naturally on $\C^n$ and the first layer of $\Heis_n$). So the problem of determining the best Hausdorff--Young constants for the Heisenberg group $\Heis_n$ is equivalent to the analogous problem for the Weyl transform.
In particular, \eqref{eq:HY_heis_weyl} and Theorem \ref{thm:local-HY} yield that
\[
W_{p,K}(\C^n) \geq (B_p)^{2n}
\]
for all $p \in [1,2]$. As an indication that equality may well hold, here we prove the following local result.

\begin{theorem}\label{thm:local-HY-weyl}
Let $K$ be a compact subgroup of $\group{U}(n)$. Then, for all $p \in [1,2]$,
\[
W_{p,K}^\loc(\C^n) \geq (B_p)^{2n}.
\]
Moreover, if $K \supseteq \group{U}(1) \times \dots \times \group{U}(1)$, then, for all $p \in [1,2]$,
\[
W_{p,K}^\loc(\C^n) = (B_p)^{2n}.
\]
\end{theorem}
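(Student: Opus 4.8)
The plan is to prove the inequality $W_{p,K}^\loc(\C^n) \geq (B_p)^{2n}$ for every compact $K \leq \group{U}(n)$ by exhibiting explicit near-extremisers, and to complement it, when $K \supseteq \group{U}(1) \times \dots \times \group{U}(1)$ (which we abbreviate $\group{U}(1)^n$), with the matching bound $W_{p,K}^\loc(\C^n) \leq (B_p)^{2n}$, obtained by reducing the $\group{U}(1)^n$-invariant Weyl transform near the origin to the Euclidean Fourier transform on $\R^2$ through the Laguerre calculus.

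For the lower bound, I would test \eqref{eq:HY_weyl} against the Gaussians $g_\alpha(z) = e^{-\alpha|z|^2}$ as $\alpha \to \infty$, restricted by a radial cutoff to the ball $\{|z| < \alpha^{-1/4}\}$ so that they lie in any prescribed neighbourhood of $0$ once $\alpha$ is large; being $\group{U}(n)$-invariant, these functions are automatically $K$-invariant for every $K \leq \group{U}(n)$. Using the tensor factorisation $\rho = \rho_1 \otimes \dots \otimes \rho_1$ of the Weyl transform along $\C^n = \C \times \dots \times \C$, together with the classical fact that $\rho_1$ carries a radial function on $\C$ to an operator diagonalised by the Hermite basis, with eigenvalues given by a Laplace-type Laguerre integral, one finds that $\rho_1(e^{-\alpha|\cdot|^2})$ has eigenvalue sequence a geometric progression $c(\alpha)\,r(\alpha)^k$ with $r(\alpha) \to 1$ as $\alpha \to \infty$; hence $\|\rho_1(e^{-\alpha|\cdot|^2})\|_{\Sch^{p'}} = c(\alpha)(1-r(\alpha)^{p'})^{-1/p'}$ and a short computation gives $\|\rho(g_\alpha)\|_{\Sch^{p'}}/\|g_\alpha\|_{L^p(\C^n)} \to (B_p)^{2n}$. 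The part removed by the cutoff has $L^p$-norm exponentially small relative to $\|g_\alpha\|_{L^p}$, and by \eqref{eq:HY_weyl} it perturbs both sides negligibly, so the ratio for the truncated Gaussians still tends to $(B_p)^{2n}$, giving $W_{p,K}^\loc(\C^n) \geq (B_p)^{2n}$.

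For the equality when $K \supseteq \group{U}(1)^n$, I would first observe that $K \mapsto W_{p,K}^\loc(\C^n)$ is non-increasing, since a larger symmetry group leaves a smaller class of admissible test functions; so it suffices to prove $W_{p,\group{U}(1)^n}^\loc(\C^n) \leq (B_p)^{2n}$. A $\group{U}(1)^n$-invariant function has the form $g(z) = G(|z_1|^2, \dots, |z_n|^2)$, and then $\rho(g)$ is diagonal in the tensor Hermite basis, with eigenvalue sequence $\lambda^{\otimes n}(G)$ indexed by $\N^n$, where $\lambda$ is the one-variable Laguerre transform $H \mapsto \bigl(\pi \int_0^\infty H(x)\,L_k(\pi x)\,e^{-\pi x/2}\,dx\bigr)_{k \geq 0}$; moreover $\|\rho(g)\|_{\Sch^{p'}} = \|\lambda^{\otimes n}(G)\|_{\ell^{p'}(\N^n)}$ and $\|g\|_{L^p(\C^n)} = \|G\|_{L^p((0,\infty)^n, \pi^n\,dx)}$. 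Since $1 \leq p \leq 2 \leq p'$, Minkowski's integral inequality allows the tensor power to be split: the norm of $\lambda^{\otimes n}$ from $L^p([0,\delta]^n, \pi^n\,dx)$ to $\ell^{p'}(\N^n)$ equals the $n$-th power of the norm of $\lambda$ from $L^p([0,\delta], \pi\,dx)$ to $\ell^{p'}(\N)$. Taking the neighbourhoods of $0$ in $\C^n$ to be polydiscs, this reduces everything to the one-dimensional statement that $\|\lambda\|_{L^p([0,\delta], \pi\,dx) \to \ell^{p'}(\N)} \to (B_p)^2$ as $\delta \to 0$.

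This one-dimensional statement is the core of the argument. I would prove it by showing that, for $H$ supported in $[0,\delta]$ with $\delta$ small, the Laguerre transform is asymptotically an order-zero Hankel transform. By the Mehler--Heine asymptotics, $L_k(\pi x)\,e^{-\pi x/2} \approx J_0\bigl(2\sqrt{\pi k x}\bigr)$, uniformly well over the range $0 \leq x \leq \delta$ and $0 \leq k \lesssim \delta^{-1}$ that carries essentially all of the $\ell^{p'}$-mass, so that the $k$-th Laguerre coefficient of $H$ is close to a value, sampled at a point of modulus $\sim \sqrt{k}$, of the Fourier transform $\widehat{\phi}$ of the radial function $\phi$ on $\R^2$ whose profile is $r \mapsto H(r^2/\pi)$. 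Since $H$ is concentrated near $0$, $\widehat{\phi}$ is slowly varying at unit scale, so the sum $\sum_k |\lambda(H)(k)|^{p'}$ is a Riemann sum converging to the radial integral $\int_0^\infty |\widehat{\phi}(\rho)|^{p'}\,\rho\,d\rho$, which after accounting for the normalisations is exactly $\|\widehat{\phi}\|_{L^{p'}(\R^2)}^{p'}$ (a fact one can check directly or read off from the lower bound already established). Combined with the Babenko--Beckner theorem on $\R^2$, $\|\widehat{\phi}\|_{L^{p'}(\R^2)} \leq (B_p)^2\,\|\phi\|_{L^p(\R^2)}$, and the identity $\|\phi\|_{L^p(\R^2)} = \pi^{1/p}\,\|H\|_{L^p(\pi\,dx)}$, this gives $\|\lambda(H)\|_{\ell^{p'}} \leq \bigl((B_p)^2 + o(1)\bigr)\|H\|_{L^p(\pi\,dx)}$ as $\delta \to 0$. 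The main obstacle is making this double passage to the limit quantitative: one needs uniform control of both the Mehler--Heine approximation and the Riemann-sum comparison, with all error terms $o(1)$ relative to $\|H\|_{L^p(\pi\,dx)}$; this is exactly where the hypothesis that the test functions are supported near the origin enters, and it is natural to isolate it as a separate lemma on Laguerre expansions.
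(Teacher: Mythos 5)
Your lower bound is correct and goes by a genuinely different route than the paper's. The paper obtains $W_{p,K}^\loc(\C^n)\geq(B_p)^{2n}$ by adapting the transplantation scheme of Section \ref{s:FLq} to twisted convolution (the characterisation of $\|\rho(f)\|_{\Sch^q}$ via $\||T_f|^q\|_{L^1\to L^\infty}$ and the degeneration of $\times_\lambda$ to ordinary convolution, plus the analogue of Remark \ref{rmk:symmetry}); you instead exhibit explicit near-extremisers, and the computation checks out: for $n=1$ the Hermite eigenvalues of $\rho(e^{-\alpha|\cdot|^2})$ are $s^{-1}(1-s^{-1})^k$ with $s=\alpha/\pi+1/2$, so $\|\rho(g_\alpha)\|_{\Sch^{p'}}/\|g_\alpha\|_{L^p}\to p^{1/p}(p')^{-1/p'}=(B_p)^2$ as $\alpha\to\infty$, tensorisation gives $(B_p)^{2n}$, the truncation error is exponentially small in both norms (absorb the Schatten side using $W_p(\C^n)\leq 1$), and radiality gives $K$-invariance for every $K\leq\group{U}(n)$. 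This is more elementary than the paper's argument and directly produces near-extremisers in every neighbourhood. For the upper bound, the reduction to $K=\group{U}(1)\times\dots\times\group{U}(1)$ by monotonicity, the Hermite diagonalisation, and the Minkowski tensorisation (legitimate since $p\leq p'$) are all fine.

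The gap is in the one-dimensional Laguerre lemma, which is exactly where the difficulty of the theorem sits and which you assert rather than prove; moreover the sketch offered for it contains a false supporting claim. First, it is not true that for $H\in L^p[0,\delta]$ the range $k\lesssim\delta^{-1}$ carries essentially all of the $\ell^{p'}$ mass: for $H(x)=e^{2\pi i\omega x}$ on $[0,\delta]$ with $\omega\gg\delta^{-1}$, the associated radial $\phi$ is a chirp whose Fourier transform lives at radii $\sim\omega\sqrt\delta$, so the coefficient mass sits near $k\sim\omega^2\delta$, arbitrarily far beyond $\delta^{-1}$. (This part is repairable, since Hilb--Szeg\H{o} asymptotics give the Bessel approximation uniformly for all $k$ when $0<x\leq\delta$, with errors that can be summed in $\ell^{p'}$.) The serious problem is the Riemann-sum step. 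Uniformly over admissible $H$, the only regularity of $\hat\phi$ you can invoke is $\|\hat\phi\|_\infty\lesssim\delta^{1/p'}\|H\|_{L^p}$ and $\|\nabla\hat\phi\|_\infty\lesssim\delta^{1/2+1/p'}\|H\|_{L^p}$; comparing $|\hat\phi(\rho_k)|^{p'}$ with the mean of $|\hat\phi|^{p'}$ over the $k$-th unit-area annulus (radial width $\sim k^{-1/2}$) produces per-cell errors of order $\bigl(\sup_{A_k}|\hat\phi|\bigr)^{p'-1}\|\nabla\hat\phi\|_\infty k^{-1/2}$, and the sum over all $k$ is not $o(\|H\|_{L^p}^{p'})$ on the basis of these bounds alone, because an arbitrary $L^p$ function has no pointwise Fourier decay; ``slowly varying at unit scale'' therefore does not close the double limit. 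One can try to rescue this with an absorption argument (H\"older in $k$ against the sampled sum itself, using $p'>2$), together with the all-$k$ Szeg\H{o} error summation, but none of that is in your proposal, and at that point you are reconstructing, with asymptotics and error terms, what the paper gets exactly: by \eqref{eq:bela1} the $k$-th coefficient of the weighted function $F=fe^{(\pi/2)|\cdot|^2}$ is an exact average of $\hat F$ against a probability measure, these measures sum precisely to Lebesgue measure, so the trivial endpoints $q=1,\infty$ of \eqref{eq:pqest} and Riesz--Thorin give $\|\tilde f\|_{\ell^{p'}}\leq\|\hat F\|_{p'}$ with no sampling error at all, and Babenko--Beckner on $\R^2$ finishes; the weight is $1+O(\delta)$ on small supports, which is why Proposition \ref{prop:locweyl} immediately yields the local equality. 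Remark \ref{rem:newideasneeded} concerns the global unweighted problem, so your local goal is not obstructed by it, but as it stands your key lemma is the theorem, not a step towards it.
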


Functions on $\C^n$ or $\Heis_n$ that are invariant under $\group{U}(1) \times \dots \times \group{U}(1)$ are called polyradial. Equality in Theorem \ref{thm:local-HY-weyl} is obtained as a consequence of the following weighted Hausdorff--Young inequality for polyradial functions $f$:
\begin{equation}\label{eq:weighted_HY_weyl}
\| \rho(f) \|_{\Sch^{p'}(\R^n)} \leq (B_p)^{2n} \| f e^{(\pi/2)|\cdot|^2} \|_{L^p(\C^n)}.
\end{equation}
Unfortunately we have not found a way to remove the weight and obtain the equality $W_{p,K}(\C^n) = W_{p,K}^\loc(\C^n)$ for arbitrary $p \in [1,2]$; note however that $W_{p,K}(\C^n) = W_{p,K}^\loc(\C^n) = (B_p)^{2n}$ when $p' \in 2\Z$, as proved in \cite{KR-1978}.

Both cases where we can prove equalities in Theorems \ref{thm:local-central-HY-compact-Lie} and \ref{thm:local-HY-weyl} for general $p \in [1,2]$ correspond to Gelfand pairs (see, for example, \cite{carcano_1987}): indeed, central functions on a compact group $G$ and polyradial functions on the Heisenberg group $\Heis_n$ form commutative subalgebras of the respective convolution algebras $L^1(G)$ and $L^1(\Heis_n)$. It seems a reasonable intermediate question to ask for best constants in Hausdorff--Young inequalities in the context of Gelfand pairs, since here the group Fourier transform reduces to the Gelfand transform for the corresponding commutative algebra of invariant functions, which makes the $L^q$ norm of the Fourier transform in these settings more accessible. Indeed, in both the proofs of Theorems \ref{thm:local-central-HY-compact-Lie} and \ref{thm:local-HY-weyl}, this additional commutativity allows one to relate the group Fourier transform and the Weyl transform with the Euclidean Fourier transform, for which the Babenko--Beckner result is available. Regrettably, even in the case of polyradial functions on the Heisenberg group we are not able yet to fully answer the question. Indeed, as we discuss in Section \ref{s:weyl}, in this case it seems unlikely that the best Hausdorff--Young constant on the Heisenberg group can be obtained by a direct reduction to the corresponding sharp Euclidean estimate, and new ideas appear to be needed.

As for the universal lower bound of Theorem \ref{thm:local-HY}, the intuitive idea behind its proof is that, at smaller and smaller scales, the group structure of a Lie group $G$ looks more and more like the abelian group structure of its Lie algebra $\Lie{g}$, whence $H_p^\loc(G)$ is likely to be related to $H_p(\Lie{g}) = (B_p)^{\dim(G)}$. Indeed, a scaling argument based on this idea readily yields the analogue of Theorem \ref{thm:local-HY} for Young's convolution inequality (see the discussion in Section \ref{s:FLq} below).
This appears to have been overlooked in \cite{KR-1978}, where a number of upper bounds for Young constants on Lie groups are proved, which are actually equalities in view of this observation.

The additional complication with the Hausdorff--Young inequality is that it involves the $L^q$ norm of the Fourier transform. While it is reasonably clear that,  at small scales, the noncommutative convolution on $G$ approximates the commutative convolution on $\Lie{g}$, the same is not so evident for the Fourier transform: indeed, if the group Fourier transform is defined, as it is common, in terms of irreducible unitary representations, then it is not immediately clear how to relate the representation theories of $G$ and $\Lie{g}$ for an arbitrary Lie group $G$, let alone the corresponding Fourier transforms and $L^q$ norms thereof. Here we completely bypass the problem, by characterising the $L^q$ norm of the Fourier transform in terms of an operator norm of a fractional power of an integral operator, acting on functions on $G$:
\begin{equation}\label{eq:FTnorm}
\| \hat f \|_q^q = \| |L_f \Delta^{1/q}|^q \|_{1 \to \infty}.
\end{equation}
Here $L_f$ is the operator of convolution on the left by $f$ and $\Delta$ is the operator of multiplication by the modular function of $G$. A transplantation argument, not dissimilar from those in \cite{mitjagin_1974,KST-1982,martini_joint_2017}, allows us to relate the operator $L_f \Delta^{1/q}$ on $G$ to its counterpart on $\Lie{g}$ and obtain the desired lower bound.

Although it might be evident to some experts in noncommutative integration, we are not aware of the characterisation \eqref{eq:FTnorm} being explicitly observed before. What is interesting about \eqref{eq:FTnorm} is that it allows one to access the $L^q$ norm of the Fourier transform through properties of a more ``geometric'' convolution-multiplication operator on $G$, which appears to be more tractable. As a matter of fact, when dealing with convolution, one can use induction-on-scales methods to completely determine the best local constants for the Young convolution inequality on any Lie group $G$; this remarkable result has been recently proved in \cite{BBBCF_2018}, as a corollary of a more general result for nonlinear Brascamp--Lieb inequalities. It would be interesting to know whether similar methods could be applied to the Hausdorff--Young inequality on noncommutative Lie groups as well.

\subsection*{Plan of the paper}
In Section \ref{s:FLq} we discuss the definition of the $L^q$ norm of the Fourier transform for an arbitrary Lie group, by comparing a number of definitions available in the literature, and prove the characterisation \eqref{eq:FTnorm}; we also present a proof of the universal lower bound of Theorem \ref{thm:local-HY}, as well as its analogue for the Young convolution inequality, and discuss relations between best constants for Young and Hausdorff--Young inequalities. The sharp local central Hausdorff--Young inequality for arbitrary compact Lie groups (Theorem \ref{thm:local-central-HY-compact-Lie}) is proved in Section \ref{s:compact}; to better explain the underlying idea without delving into technicalities, the proof of the abelian case (Theorem \ref{thm:local-HY-Tn}) is briefly revisited in Section \ref{s:torus}. Finally, in Section \ref{s:weyl} we discuss the relations between Hausdorff--Young constants for the Heisenberg group and the Weyl transform and prove Theorem \ref{thm:local-HY-weyl}, together with the weighted inequality \eqref{eq:weighted_HY_weyl} for polyradial functions.

\section{\texorpdfstring{$L^q$}{Lq} norm of the Fourier transform}\label{s:FLq}

Let $G$ be a Lie group (or, more generally, a separable locally compact group) with a fixed left Haar measure. 
In order to discuss best Hausdorff--Young constants in this generality, we first need to clarify what is meant by the ``Fourier transform'' in this setting and how Hausdorff--Young inequalities --- even the endpoint ones, such as the Plancherel formula --- can be stated in this context.

A common way to generalise the Fourier transformation to this setting exploits irreducible unitary representations of $G$ (see, for example, \cite{lipsman_1974} or \cite[Chapter 7]{folland_course_1995} for a survey). Namely, let $\widehat G_\unit$ be the ``unitary dual'' of $G$, that is, the set of (equivalence classes of) irreducible unitary representations of $G$, endowed with the Fell topology and the Mackey Borel structure. The (unitary) Fourier transform $\Four_\unit f$ of a function $f \in L^1(G)$ is then defined as the operator-valued function on $\widehat G_\unit$ given by
\[
\widehat G_\unit \ni \pi \mapsto \pi(f) = \int_G f(x) \pi(x) \,dx \in \Lin(\Hilb_\pi);
\]
here $\Lin(\Hilb_\pi)$ denotes the space of bounded linear operators on the Hilbert space $\Hilb_\pi$ on which the representation $\pi$ acts, and integration is with respect to the Haar measure. In case $G$ is unimodular and type I (this includes the cases where $G$ is abelian or compact), the Plancherel formula can be stated in the form
\begin{equation}\label{eq:plancherel_rep}
\|f\|^2_{L^2(G)} = \int_{\widehat G_\unit} \| \pi(f) \|_{\HS(\Hilb_\pi)}^2 \,d\pi
\end{equation}
for all $f \in L^1 \cap L^2(G)$. Here $\HS(\Hilb_\pi)$ denotes the space of Hilbert--Schmidt operators on $\Hilb_\pi$, and integration on $\widehat G_\unit$ is with respect to a suitable measure, called the Plancherel measure, which is uniquely determined by the above formula; in addition, the Fourier transformation $f \mapsto \Four_\unit f$ extends to an isometric isomorphism between $L^2(G)$ and the direct integral $L^2_\unit(\widehat G) := \int^\oplus_{\widehat G_\unit} \HS(\Hilb_\pi) \,d\pi$. Interpolation then leads to the Hausdorff--Young inequality
\begin{equation}\label{eq:Lq_rep}
\|\Four_\unit f\|_{L^{p'}_\unit(\widehat G)} := \left(\int_{\widehat G_\unit} \| \pi(f) \|_{\Sch^{p'}(\Hilb_\pi)}^{p'} \,d\pi\right)^{1/p'} \leq C \|f\|_{L^p(G)}
\end{equation}
when $1 < p < 2$, where $C = 1$; here, for all $q \in [1,\infty]$, $\Sch^q(\Hilb_\pi)$ denotes the $q$th Schatten class of operators on $\Hilb_\pi$, and the operator-valued $L^q$-spaces $L^q_\unit(\widehat G)$ are defined in terms of measurable fields of operators as in \cite{lipsman_1974}. The fact that the spaces $L^q_\unit(\widehat G)$ constitute a complex interpolation family, that is,
\begin{equation}\label{eq:interpol_unitLp}
[L^{q_0}_\unit(\widehat G),L^{q_1}_\unit(\widehat G)]_\theta = L^q_\unit(\widehat G)
\end{equation}
with equal norms for $q_0,q_1,q \in [1,\infty]$, $\theta \in (0,1)$, $1/q = (1-\theta)/q_0 + \theta/q_1$, readily follows from standard interpolation results for vector-valued Lebesgue spaces and Schatten classes (see, for example, \cite{triebel_1978,hytonen_2016,Pisier-Xu-2003}) and the structure of the measurable field of separable Hilbert spaces $\pi \mapsto \Hilb_\pi$ \cite[Proposition 7.19]{folland_course_1995}.

In the case where $G$ is not unimodular, under suitable type I assumptions it is possible to prove a Plancherel formula similar to \eqref{eq:plancherel_rep}, where the right-hand side is adjusted by means of ``formal dimension operators'' \cite{tatsuuma_1972,kleppner_lipsman_1972,kleppner_lipsman_1973,duflo_moore_1976,Fuehr_2005}. Analogous modifications of \eqref{eq:Lq_rep} lead to a version of the Hausdorff--Young inequality that has been studied in a number of works \cite{Eymard-Terp-1979,Russo-1979,Inoue-1992,Fuehr_2006,Baklouti-Ludwig-Scuto-Smaoui-2007}. 

When $G$ is not type I, 
 the above approach to the Plancherel formula based on irreducible unitary representation theory does not work as neatly.
This however does not prevent one from studying the Hausdorff--Young inequality. Indeed, what is possibly the first appearance in the literature of the Hausdorff--Young inequality in a noncommutative setting, that is, the work of Kunze \cite{Kun-1958} for arbitrary unimodular locally compact groups (not necessarily of type I), does not express the Fourier transform in terms of irreducible unitary representations, but uses instead the theory of noncommutative integration (the same theory was used in earlier works of Mautner \cite{mautner_unitary_1950} and Segal \cite{segal-1950} to express the Plancherel formula). This point of view was subsequently developed by Terp \cite{terp_1980} to cover the case of non-unimodular groups and more recently has been further extended to the context of locally compact quantum groups \cite{caspers_2013,cooney_2010}.

One way of thinking of noncommutative $L^q$ spaces is as complex interpolation spaces between a von Neumann algebra $M$ and its predual $M_*$ (which play the role of $L^\infty$ and $L^1$ respectively)  \cite{terp_1982,Kosaki-1984,Izumi-1997,Pisier-Xu-2003}. In general this requires establishing a ``compatibility'' between $M$ and $M_*$, which may involve a number of choices, but in our case there appears to be a natural way to proceed (see also \cite{forrest_2011,daws_2011}). Namely, the von Neumann algebra $\VN(G)$ of $G$ (that is, the weak${}^*$-closed $*$-subalgebra of $\Lin(L^2(G))$ of the operators which commute with right translations) can be identified with the space $\Cv^2(G)$ of left convolutors of $L^2(G)$, that is, those distributions on $G$ which are left convolution kernels of $L^2(G)$-bounded operators. Moreover, the predual $\VN(G)_*$ can be identified with the Fourier algebra $A(G)$, an algebra of continuous functions on $G$ defined by Eymard \cite{Eymard-1964} for arbitrary locally compact groups $G$. Now $A(G)$ and $\Cv^2(G)$ are naturally compatible as spaces of distributions on $G$ (see \cite[Propositions (3.26) and (3.27)]{Eymard-1964}), so we can use complex interpolation to define Fourier--Lebesgue spaces of distributions on $G$: for $q\in[1,\infty]$, we set
\[
\Four L^q(G) = \begin{cases}
A(G) &\text{if } q=1,\\
\Cv^2(G) &\text{if } q=\infty,\\
[A(G),\Cv^2(G)]_{1-1/q} &\text{if } 1 < q < \infty.
\end{cases}
\]
One can check that this definition corresponds to Izumi's left $L^p$ spaces \cite{Izumi-1997,Izumi-1998} for the von Neumann algebra $\VN(G)$ with respect to the Plancherel weight, and therefore it matches the construction given in \cite{caspers_2013,cooney_2010} for quantum groups. In particular $\Four L^2(G) = L^2(G)$ with equality of norms (see \cite[Section 5]{Izumi-1998} and \cite[Proposition 2.21(iii)]{caspers_2013}; this corresponds to the Plancherel theorem), while clearly $L^1(G) \subseteq \Cv^2(G)$ with norm-decreasing embedding. Interpolation then leads to the following formulation of the Hausdorff--Young inequality: $L^p(G) \subseteq \Four L^{p'}(G)$ and
\begin{equation}\label{eq:HY_FLq}
\| f \|_{\Four L^{p'}(G)} \leq C \| f\|_{L^p(G)}
\end{equation}
where $C= 1$ and $p \in [1,2]$.

We then define the $L^p$ Hausdorff--Young constant $H_p(G)$ on the group $G$ as the minimal constant $C$ for which \eqref{eq:HY_FLq} holds for all $f \in L^p(G)$. Similarly, if $U$ is a neighbourhood of the identity in $G$, we let $H_p(G;U)$ be the minimal constant $C$ in \eqref{eq:HY_FLq} when $f$ is constrained to have support in $U$, and define the local $L^p$ Hausdorff--Young constant $H_p^\loc(G)$ as the infimum of the constants $H_p(G;U)$ where $U$ ranges over the neighbourhoods of the identity of $G$.

The approach to Hausdorff--Young constants via $\Four L^q$ spaces is consistent with the unitary Fourier transformation approach described above, when the latter is applicable. Indeed, as discussed in \cite[Theorems 2.1 and 3.1]{lipsman_1974}, in the case where $G$ is unimodular and type I, the unitary Fourier transformation $\Four_\unit$ induces isometric isomorphisms $\Cv^2(G) \cong L^\infty_\unit(\widehat G)$ and $A(G) \cong L^1_\unit(\widehat G)$, besides the Plancherel isomorphism $L^2(G) \cong L^2_\unit(\widehat G)$ (analogous results in the nonunimodular case can be found in \cite[Theorems 3.48 and 4.12]{Fuehr_2005}); so by interpolation $\Four_u$ induces an isometric isomorphism between $\Four L^q(G)$ and $L^q_\unit(\widehat G)$ for all $q \in [1,\infty]$. Hence defining Hausdorff--Young constants in terms of the inequality \eqref{eq:Lq_rep} would lead to the same constants $H_p(G)$ and $H_p^\loc(G)$ as those we have defined in terms of $\Four L^q$ spaces. On the other hand, the approach via $\Four L^q$ spaces does not require type I assumptions, or even separability, and can be applied to every locally compact group $G$.

There is an alternative characterisation of the noncommutative $L^q$ spaces associated to $\VN(G)$, namely as certain spaces $L^q_{\VN}(\widehat G)$ of (closed, possibly unbounded) operators on $L^2(G)$. This characterisation, which is that originally used in the works of Kunze and Terp on the Hausdorff--Young inequality, corresponds to Hilsum's approach to noncommutative $L^q$ spaces \cite{Hilsum-1981} based on Connes's ``spatial derivative'' construction \cite{Connes-1980} (the work of Kunze is actually based on an earlier version of the theory \cite{dixmier_1953,segal-1953} that only applies to semifinite von Neumann algebras). We will not enter into the details of this construction and only recall two important properties. First, if the operator $T$ belongs to $L^q_{\VN}(\widehat G)$ for some $q \in [1,\infty)$, then $|T|^q = (T^* T)^{q/2}$ belongs to $L^1_{\VN}(\widehat G)$ and
\begin{equation}\label{eq:LqL1}
\|T\|_{L^q_{\VN}(\widehat G)}^q = \||T|^q\|_{L^1_{\VN}(\widehat G)}.
\end{equation}
Moreover, for all $q \in [1,\infty]$, an isometric isomorphism from $\Four L^q(G)$ to $L^q_{\VN}(\widehat G)$ is given by
\begin{equation}\label{eq:LqFT}
f \mapsto L_f \Delta^{1/q},
\end{equation}
where $L_f$ is the left-convolution operator by $f$, and we identify the modular function $\Delta$ of $G$ with the corresponding multiplication operator (see \cite[Proposition 2.21(ii)]{caspers_2013}). Recall that convolution on $G$ is given by
\[
L_f \phi(x) = f * \phi(x) = \int_G f(xy) \, \phi(y^{-1}) \,dy,
\]
at least when $f$ and $\phi$ are in $C_c(G)$.

Note that, when $q=p'$, \eqref{eq:LqFT} matches the definitions by Kunze and by Terp of the $L^p$ Fourier transformation $\Four_p : L^p(G) \to L^{p'}_{\VN}(\widehat G)$ for $p \in [1,2]$ \cite{Kun-1958,terp_1980}. In other words, the $L^p$ Fourier transformation $\Four_p : L^p(G) \to L^{p'}_{\VN}(\widehat G)$ factorises as the inclusion map $L^p(G) \to \Four L^{p'}(G)$ and the isometric isomorphism $\Four L^{p'}(G) \to L^{p'}_{\VN}(\widehat G)$, whence the compatibility  with the Kunze--Terp approach of the above definition of the best Hausdorff--Young constants based on \eqref{eq:HY_FLq}.

Another consequence of the above discussion is the following characterisation of the $\Four L^q(G)$ norm in terms of a more ``concrete'' operator norm.

\begin{proposition}\label{prp:fouriernorm}
For all $q \in [1,\infty)$ and $f \in \Four L^q(G)$,
\begin{equation}\label{eq:nu_fouriernorm}
\| f \|_{\Four L^q(G)} =  \| |L_f \Delta^{1/q}|^q \|_{L^1(G) \to L^\infty(G)}^{1/q}.
\end{equation}
\end{proposition}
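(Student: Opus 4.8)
The plan is to combine the two structural facts about Hilsum's spaces $L^q_{\VN}(\widehat G)$ recalled above, namely the isometry \eqref{eq:LqFT} and the identity \eqref{eq:LqL1}, with a direct computation of an integral kernel. Applying \eqref{eq:LqFT} and then \eqref{eq:LqL1} to the operator $T = L_f\Delta^{1/q}$, one gets
\[
\|f\|_{\Four L^q(G)}^q = \|L_f\Delta^{1/q}\|_{L^q_{\VN}(\widehat G)}^q = \bigl\| \, |L_f\Delta^{1/q}|^q \, \bigr\|_{L^1_{\VN}(\widehat G)},
\]
with $S := |L_f\Delta^{1/q}|^q$ a \emph{positive} element of $L^1_{\VN}(\widehat G)$. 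Thus it suffices to show that $\|S\|_{L^1_{\VN}(\widehat G)} = \|S\|_{L^1(G)\to L^\infty(G)}$ for every positive $S \in L^1_{\VN}(\widehat G)$.

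Next I would use \eqref{eq:LqFT} in the case $q=1$: it identifies $S$ with a unique $g \in \Four L^1(G) = A(G)$ via $S = L_g\Delta$, with $\|S\|_{L^1_{\VN}(\widehat G)} = \|g\|_{A(G)}$, and, since this identification of $L^1_{\VN}(\widehat G)$ with the predual $\VN(G)_* = A(G)$ is an order isomorphism and $S \geq 0$, the function $g$ is a continuous positive-definite function on $G$. A change of variables (using the inversion formula $\int_G h(y)\,dy = \int_G h(y^{-1})\,\Delta(y)^{-1}\,dy$, which makes the modular factor cancel) shows that $L_g\Delta$ acts on $C_c(G)$ as the integral operator with kernel $k(x,y) = g(xy^{-1})$:
\[
(L_g\Delta\,\phi)(x) = \int_G g(xy)\,\Delta(y^{-1})\,\phi(y^{-1}) \,dy = \int_G g(xy^{-1})\,\phi(y) \,dy .
\]
Since $g \in A(G) \subseteq C_0(G)$ is bounded and continuous, this kernel is bounded, and the operator norm of an integral operator with bounded continuous kernel from $L^1(G)$ to $L^\infty(G)$ equals the supremum norm of the kernel; hence $\|S\|_{L^1(G)\to L^\infty(G)} = \|g\|_\infty$.

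It then remains to note that $\|g\|_{A(G)} = \|g\|_\infty$. The bound $\|g\|_\infty \leq \|g\|_{A(G)}$ holds for every element of $A(G)$, from its description as an algebra of coefficient functions of the regular representation; conversely, as $g$ is positive-definite, viewed as a positive normal functional on $\VN(G)$ its norm equals its value at the identity operator $\lambda(e)$, which is $g(e) = \|g\|_\infty$. Combining the displayed identities gives
\[
\|f\|_{\Four L^q(G)}^q = \|S\|_{L^1_{\VN}(\widehat G)} = \|g\|_{A(G)} = \|g\|_\infty = \|S\|_{L^1(G)\to L^\infty(G)},
\]
which, after taking $q$th roots, is the proposition.

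I expect the only real difficulty to be bookkeeping rather than conceptual: one must make the manipulations with the (generally unbounded) operators $L_f\Delta^{1/q}$ and their fractional powers legitimate inside Hilsum's formalism, and in particular check carefully that $S$ corresponds to an element $g$ of $A(G)$, that positivity is preserved under the relevant identifications, and that the $L^1 \to L^\infty$ norm of the resulting integral operator is genuinely the sup norm of its kernel. Once the correct model of $L^1_{\VN}(\widehat G)$ is pinned down and these points are in place, the algebraic skeleton above is short.
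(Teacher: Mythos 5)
Your proposal is correct and follows essentially the same route as the paper's own proof: both reduce via \eqref{eq:LqFT} and \eqref{eq:LqL1} to the element $g \in A(G)$ with $L_g\Delta = |L_f\Delta^{1/q}|^q$, compute the kernel $g(xy^{-1})$, and use positive-definiteness of $g$ to get $\|g\|_{A(G)} = g(e) = \|g\|_\infty = \|L_g\Delta\|_{L^1(G)\to L^\infty(G)}$. The extra details you supply (the $L^1\to L^\infty$ norm of a kernel operator and the identification of the norm of a positive normal functional with its value at the identity) are exactly the facts the paper invokes implicitly.
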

\begin{proof}
By \eqref{eq:LqL1} and \eqref{eq:LqFT},
\[
\| f \|_{\Four L^q(G)} = \| L_f \Delta^{1/q} \|_{L^q_{\VN}(\widehat G)} = \| |L_f \Delta^{1/q}|^q \|_{L^1_{\VN}(\widehat G)}^{1/q} = \| g \|_{A(G)}^{1/q},
\]
where $g \in A(G)$ satisfies $L_g \Delta = |L_f \Delta^{1/q}|^q$. On the other hand, the operator $L_g \Delta$ is given by
\[
L_g \Delta \phi(x) = \int_G g(xy) \, \Delta(y^{-1}) \, \phi(y^{-1}) \,dy = \int_G g(xy^{-1}) \, \phi(y) \,dy;
\]
since $L_g \Delta = |L_f \Delta^{1/q}|^q$ is a positive operator, the kernel $g$ must be a function of positive type (see, for example, \cite[Section 3.3]{folland_course_1995}), whence
\[
\|g\|_{A(G)} = g(e) = \|g\|_\infty = \| L_g \Delta \|_{L^1(G) \to L^\infty(G)}
\]
and we are done.
\end{proof}

A classical way of accessing Hausdorff--Young constants is through their relations with best constants in the Young convolution inequalities. Recall that, for a possibly nonunimodular group $G$, the $k$-linear version of Young's inequality takes the following form:
for all $p_1,\dots,p_k,r \in [1,\infty]$ such that $\sum_{j=1}^k 1/p_j' = 1/r'$,
\begin{equation}\label{eq:nu_young}
\Bigl\| \bigast_{j=1}^k (f_j \Delta^{\sum_{l=1}^{j-1} 1/p_l'}) \Bigr\|_{L^r(G)} \leq C \prod_{j=1}^k \|f_j\|_{L^{p_j}(G)}
\end{equation}
where $C \leq 1$ (see \cite[Lemma 1.1]{terp_1980}, or \cite[Corollary 2.3]{KR-1978} where the inequality is written for the right Haar measure). As in the case of the Hausdorff--Young inequality, we can define the Young constant $Y_{p_1,\dots,p_k}(G)$ for $G$ as the smallest constant $C$ for which \eqref{eq:nu_young} holds for all $f_1 \in L^{p_1}(G), \dots, f_k \in L^{p_k}(G)$, as well as the localised versions $Y_{p_1,\dots,p_k}(G;U)$ for neighbourhoods $U$ of the identity of $G$ (corresponding to the constraint $\supp f_1, \dots, \supp f_k \subseteq U$) and $Y^\loc_{p_1,\dots,p_k}(G)$.

Note that the above Young inequality \eqref{eq:nu_young} is ``dual'' to the following H\"older-type inequality for $\Four L^p$-spaces: for all $p_1,\dots,p_k,r \in [1,\infty]$ such that $\sum_{j=1}^k 1/p_j = 1/r$,
\begin{equation}\label{eq:nu_nchoelder}
\Bigl\| \bigast_{j=1}^k (f_j \Delta^{\sum_{l=1}^{j-1} 1/p_l}) \Bigr\|_{\Four L^r(G)} \leq \prod_{j=1}^k \|f_j\|_{\Four L^{p_j}(G)};
\end{equation}
this is a rephrasing of H\"older's inequality for Hilsum's noncommutative $L^p$ spaces,
\[
\| T_1 \cdots T_k \|_{L^r_{\VN}(\widehat G)} \leq \prod_{j=1}^k \| T_j \|_{L^{p_j}_{\VN}(\widehat G)}
\]
\cite[Proposition 8]{Hilsum-1981}, via the isomorphism \eqref{eq:LqFT} from $\Four L^q(G)$ to $L^q_{\VN}(\widehat G)$ and the identities
\begin{equation}\label{eq:conv_modular}
\Delta^{\alpha} (f*g) = (\Delta^\alpha f) * (\Delta^\alpha g) \qquad\text{and}\qquad L_{\Delta^{\alpha} f} = \Delta^{\alpha} L_f \Delta^{-\alpha},
\end{equation}
valid for all $\alpha \in \C$. Let us also recall that
\begin{equation}\label{eq:adj_involution}
L_{f^*} = L_{f}^*,
\end{equation}
where $f \mapsto f^*$ is the isometric conjugate-linear involution of $L^1(G)$ given by
\[
f^*(x) = \Delta^{-1}(x) \, \overline{f(x^{-1})}.
\]

The proposition below summarises a number of relations between Young and Hausdorff--Young constants that can be found in the literature, at least in particular cases (see, for example, \cite{Bec-1975} and \cite{KR-1978}), as well as corresponding local versions.

\begin{proposition}\label{prop:basic}
Let $G$ be a locally compact group.
\begin{enumerate}[label=(\roman*)]
\item\label{en:yhineq} For all $p_1,\dots,p_k,q \in [1,2]$ such that $\sum_j 1/p_j' = 1/q$,
\begin{align*}
Y_{p_1,\dots,p_k}(G) &\leq H_{q}(G) \, H_{p_1}(G) \cdots H_{p_k}(G), \\
Y_{p_1,\dots,p_k}^\loc(G) &\leq H_{q}^\loc(G) \, H_{p_1}^\loc(G) \cdots H_{p_k}^\loc(G).
\end{align*}
\item\label{en:yheq} For all $p \in [1,2)$ such that $p'=2k$, $k \in \Z$, if $p_1=\dots=p_k=p$, then
\begin{align*}
H_{p}(G) &= Y_{p_1,\dots,p_k}(G)^{1/k}, \\
H_{p}^\loc(G) &= Y_{p_1,\dots,p_k}^\loc(G)^{1/k}.
\end{align*}
\item\label{en:ext} If $N$ is a closed normal subgroup of $G$, then, for all $p_1,\dots,p_k \in [1,\infty]$ such that $\sum_{j=1}^k 1/p_j' \in [0,1]$,
\begin{align*}
Y_{p_1,\dots,p_k}(G) &\leq Y_{p_1,\dots,p_k}(N) \, Y_{p_1,\dots,p_k}(G/N),\\
Y_{p_1,\dots,p_k}^\loc(G) &\leq Y_{p_1,\dots,p_k}^\loc(N) \, Y_{p_1,\dots,p_k}^\loc(G/N),
\end{align*}
with equality when $G \cong N \times (G/N)$.
\end{enumerate}
\end{proposition}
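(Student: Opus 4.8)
The three parts combine the isometric identification $f\mapsto L_f\Delta^{1/q}$ of $\Four L^q(G)$ with $L^q_{\VN}(\widehat G)$ from \eqref{eq:LqFT} with standard facts about the latter spaces (H\"older's inequality, the relation \eqref{eq:LqL1}, self-duality) and the modular identities \eqref{eq:conv_modular}, \eqref{eq:adj_involution}. For part \ref{en:yhineq} the plan is to squeeze Young's inequality between H\"older's inequality for $\Four L^q$ spaces and the Hausdorff--Young inequality, applied once ``forwards'' and once ``dually''. Given $f_1,\dots,f_k\in C_c(G)$, set $\phi = \bigast_{j=1}^k(f_j\Delta^{\sum_{l=1}^{j-1}1/p_l'})$; a computation with \eqref{eq:conv_modular} gives $L_\phi\Delta^{1/q} = \prod_{j=1}^k(L_{f_j}\Delta^{1/p_j'})$, so by H\"older in $L^q_{\VN}(\widehat G)$ (legitimate since $\sum_j 1/p_j' = 1/q$) and \eqref{eq:HY_FLq} for each factor,
\[
\|\phi\|_{\Four L^q(G)} \le \prod_{j=1}^k \|f_j\|_{\Four L^{p_j'}(G)} \le \prod_{j=1}^k H_{p_j}(G)\,\|f_j\|_{L^{p_j}(G)}.
\]
It then remains to prove the dual bound $\|\phi\|_{L^{q'}(G)} \le H_q(G)\|\phi\|_{\Four L^q(G)}$: since $\Four L^q(G)\cong L^q_{\VN}(\widehat G)$ and $L^{q'}(G)$ are, with compatible pairings, the duals of $\Four L^{q'}(G)$ and $L^q(G)$, this is the adjoint of the embedding $L^q(G)\hookrightarrow\Four L^{q'}(G)$ of norm $H_q(G)$; concretely, testing $\phi$ against $\psi\in C_c(G)$ with $\|\psi\|_{L^q(G)}\le1$ and using \eqref{eq:conv_modular} and the property $\tau(L_h\Delta)=h(e)$ of the Plancherel weight $\tau$, one evaluates the pairing of $L_\phi\Delta^{1/q}$ with $L_\psi\Delta^{1/q'}$ as $\int_G\phi(y^{-1})\Delta^{-1/q'}(y)\psi(y)\,dy$, bounds it by $H_q(G)\|\phi\|_{\Four L^q(G)}$, and notes that its supremum over such $\psi$ equals $\|\phi\|_{L^{q'}(G)}$. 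The local statement follows since $\supp\phi\subseteq\prod_j\supp f_j$: one localises the two steps to $U$ and to $(U^{-1})^k$ and lets $U\downarrow\{e\}$.

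For part \ref{en:yheq}, the inequality $Y_{p,\dots,p}(G)\le H_p(G)^k$ (and its local form) is the case $q=2$, $p_1=\dots=p_k=p$ of part \ref{en:yhineq}, using $H_2(G)=1$ (Plancherel). For the reverse inequality I would exploit that $p'=2k\in 2\Z$ to convert $\|f\|_{\Four L^{p'}(G)}^{p'}$ into an $L^2(G)$ norm. Putting $T = L_f\Delta^{1/p'}$, by \eqref{eq:LqL1}
\[
\|f\|_{\Four L^{p'}(G)}^{p'} = \bigl\| (T^*T)^k \bigr\|_{L^1_{\VN}(\widehat G)} = \|S\|_{L^2_{\VN}(\widehat G)}^2,
\]
where $S = (T^*T)^{k/2}$ for $k$ even and $S = T(T^*T)^{(k-1)/2}$ for $k$ odd (so that $S^*S=(T^*T)^k$). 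Using \eqref{eq:adj_involution} and \eqref{eq:conv_modular}, together with $T^*T = L_{\Delta^{1/p'}(f^**f)}\Delta^{1/k}$ and $k/p'=1/2$, one finds $S = L_V\Delta^{1/2}$ with $V$ a convolution of $k$ functions, each equal to $f$ or to $\Delta^{1/p'}f^*$, twisted by $\Delta^0,\Delta^{1/p'},\dots,\Delta^{(k-1)/p'}$ exactly as on the left of \eqref{eq:nu_young}. Since $\|\Delta^{1/p'}f^*\|_{L^p(G)} = \|f\|_{L^p(G)}$, Plancherel gives $\|S\|_{L^2_{\VN}(\widehat G)} = \|V\|_{L^2(G)}$, and \eqref{eq:nu_young} (with $k$ factors, all exponents $p$, hence $r=2$) yields $\|f\|_{\Four L^{p'}(G)}^{p'} = \|V\|_{L^2(G)}^2 \le Y_{p,\dots,p}(G)^2\|f\|_{L^p(G)}^{2k}$, i.e.\ $H_p(G)\le Y_{p,\dots,p}(G)^{1/k}$; the local version follows since $\supp V\subseteq(U\cup U^{-1})^k$ when $\supp f\subseteq U$.

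For part \ref{en:ext} the plan is a fibration argument over $N$. Since $\Delta_G|_N = \Delta_N$ for a closed normal subgroup, Weil's formula holds; writing $\Delta_G(x) = \Delta_{G/N}(xN)\,\delta_N(x)$ with $\delta_N|_N = \Delta_N$, and choosing a Borel section $G/N\to G$, one disintegrates the $(k-1)$-fold integral defining $\bigast_{j=1}^k(f_j\Delta_G^{\sum_{l=1}^{j-1}1/p_l'})$ into an inner integral over $N^{k-1}$ and an outer one over $(G/N)^{k-1}$. The inner integrals assemble into a $k$-fold twisted convolution on $N$ of the fibres of the $f_j$ (the section cocycle merely relabels the $N$-variable, which is harmless for $L^p$ norms), the outer one into a $k$-fold twisted convolution on $G/N$; applying \eqref{eq:nu_young} on $N$ fibrewise, then Minkowski's integral inequality, then \eqref{eq:nu_young} on $G/N$, gives $Y_{p_1,\dots,p_k}(G)\le Y_{p_1,\dots,p_k}(N)\,Y_{p_1,\dots,p_k}(G/N)$, and the same computation with supports constrained gives the local version. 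When $G\cong N\times(G/N)$ one may take the section to be a homomorphism and $\delta_N$ trivial on the second factor, so the twisted convolution on $G$ factors as a tensor product; testing on product functions and using near-extremisers on $N$ and on $G/N$ yields the reverse inequality, hence equality.

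The analytic ingredients (H\"older and duality for the spaces $L^q_{\VN}(\widehat G)$, Plancherel, and the inequalities \eqref{eq:nu_young} and \eqref{eq:HY_FLq}) are classical, so I expect the main obstacle to be bookkeeping: checking that the powers of the modular function generated by \eqref{eq:conv_modular} --- and, in part \ref{en:ext}, by Weil's formula and the section cocycle --- match exactly the twists appearing in \eqref{eq:nu_young} and \eqref{eq:nu_nchoelder}, and that all the estimates involved are insensitive to the non-homomorphic nature of the section.
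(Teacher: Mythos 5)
Your argument is correct and follows essentially the same route as the paper's: part \ref{en:yhineq} is the same combination of H\"older's inequality for the noncommutative $L^q$ spaces, duality against a test function, and the Hausdorff--Young inequality applied to that test function (you merely package the last two steps as a separate ``dual'' bound $\|\phi\|_{L^{q'}}\le H_q(G)\|\phi\|_{\Four L^q}$); part \ref{en:yheq} uses exactly the paper's factorisation of $|L_f\Delta^{1/p'}|^{p'}$ as $|L_g\Delta^{1/2}|^2$ with $g$ a $k$-fold convolution of copies of $f$ and $\Delta^{1/p'}f^*$, followed by Plancherel and Young. For part \ref{en:ext} your Weil-disintegration/tensor-product plan is precisely the Klein--Russo argument that the paper itself cites without further detail, so it is at the same level of rigour as the published proof.
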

\begin{proof}
\ref{en:yhineq}.
For all $f_1,\dots,f_k,g \in C_c(G)$, by \eqref{eq:HY_FLq} and \eqref{eq:nu_nchoelder},
\[\begin{split}
\left\langle \bigast_{j=1}^k (f_j \Delta^{\sum_{l=1}^{j-1} 1/p_l'}) , g \right\rangle  &\leq \left\| \bigast_{j=1}^k (f_j \Delta^{\sum_{l=1}^{j-1} 1/p_l'})  \right\|_{\Four L^q} \|  g \|_{\Four L^{q'}} \\
&\leq \| f_1\|_{\Four L^{p_1'}} \cdots \|f_k\|_{\Four L^{p_k'}} \|g\|_{\Four L^{q'}} \\
&\leq H_q(G) H_{p_1}(G) \cdots H_{p_k}(G) \|f_1\|_{L^{p_1}} \cdots \|f_k\|_{L^{p_k}} \|g\|_{L^q},
\end{split}\]
which proves that
\[
\left\|\bigast_{j=1}^k (f_j \Delta^{\sum_{l=1}^{j-1} 1/p_l'}) \right\|_{L^{q'}} \leq H_q(G) H_{p_1}(G) \cdots H_{p_k}(G) \|f_1\|_{L^{p_1}} \cdots \|f_k\|_{L^{p_k}},
\]
that is, $Y_{p_1,\dots,p_k}(G) \leq H_{q}(G) \, H_{p_1}(G) \cdots H_{p_k}(G)$. Note now that, if $f_1,\dots,f_k$ are supported in a neighbourhood $U$ of the identity, then $\bigast_{j=1}^k (f_j \Delta^{\sum_{l=1}^{j-1} 1/p_l'})$ is supported in $U^k$ and, to estimate its $L^{q'}$ norm, it is enough to test it against functions $g$ that are also supported in $U^k$; the same argument as above then also gives 
\[
Y_{p_1,\dots,p_k}(G;U) \leq H_{q}(G;U^k) \, H_{p_1}(G;U) \cdots H_{p_k}(G;U)
\]
and $Y_{p_1,\dots,p_k}^\loc(G) \leq H_{q}^\loc(G) \, H_{p_1}^\loc(G) \cdots H_{p_k}^\loc(G)$.

\ref{en:yheq}.
Part \ref{en:yhineq} gives us the inequality $H_{p}(G) \geq Y_{p_1,\dots,p_k}(G)^{1/k}$ and its local version.
On the other hand, for all $f \in C_c(G)$, if we define $\tilde f = \Delta^{1/p'} f^*$, then
\[
\|\tilde f\|_p = \|f\|_p
\]
and, by \eqref{eq:conv_modular} and \eqref{eq:adj_involution},
\[
L_{\tilde f} \Delta^{1/p'} = (L_f \Delta^{1/p'})^*.
\]
For all $j=1,\dots,k$, let $f_j$ be either $\tilde f$ or $f$, according to whether $k-j$ is odd or even, and define $g = \bigast_{j=1}^k (f_j \Delta^{(j-1)/p'})$. Then, since $p'=2k$,
\[\begin{split}
|L_f \Delta^{1/p'}|^{p'} &= [(L_f \Delta^{1/p'})^* (L_f \Delta^{1/p'}) ]^k \\
&= (L_{\tilde f} \Delta^{1/p'}) (L_{f} \Delta^{1/p'}) \cdots (L_{\tilde f} \Delta^{1/p'}) (L_{f} \Delta^{1/p'}) \\
&= |(L_{f_1} \Delta^{1/p'}) \cdots (L_{f_k} \Delta^{1/p'})|^2
\end{split}\]
and, by \eqref{eq:conv_modular},
\[
(L_{f_1} \Delta^{1/p'}) \cdots (L_{f_k} \Delta^{1/p'}) = L_g \Delta^{1/2}.
\]
So $|L_f \Delta^{1/p'}|^{p'} = |L_g \Delta^{1/2}|^2$ and, by \eqref{eq:nu_fouriernorm} and \eqref{eq:nu_young},
\[
\| f \|_{\Four L^{p'}}^{p'} = \| g \|_{\Four L^2}^2 = \| g \|_{L^2}^2 \leq Y_{p_1,\dots,p_k}(G)^2 \|f_1\|_{L^p}^2 \cdots \| f_k \|_{L^p}^2 = Y_{p_1,\dots,p_k}(G)^2 \|f\|_{L^p}^{p'},
\]
which gives the inequality $H_{p}(G) \leq Y_{p_1,\dots,p_k}(G)^{1/k}$. The same argument also gives $H_{p}(G;U) \leq Y_{p_1,\dots,p_k}(G;U)^{1/k}$ and $H_{p}^\loc(G) \leq Y_{p_1,\dots,p_k}^\loc(G)^{1/k}$.

\ref{en:ext}.
The inequalities are proved by a simple extension of Klein and Russo's argument for the case of semidirect products \cite[proof of Lemma 2.4]{KR-1978}, using the ``measure disintegration'' in \cite[Theorem (2.49)]{folland_course_1995}.
In the case of direct products, equalities follow by testing on tensor product functions (see \cite[Lemma 5]{Bec-1975}).
\end{proof}

The next lemma contains the fundamental approximation results that allow us to relate 
Hausdorff--Young constants on a Lie group $G$ and on its Lie algebra $\Lie{g}$ by means of a ``transplantation'' or ``blow-up'' technique.
The Lie algebra $\Lie{g}$ will be considered as an abelian group with addition, and the Lebesgue measure on $\Lie{g}$ is normalised so that the Jacobian determinant of the exponential map $\exp : \Lie{g} \to G$ is equal to $1$ at the origin. The context will make clear whether the notation for convolution, involution and convolution operators ($f*g$, $f^*$, $L_f$) refers to the group structure of $G$ or the abelian group structure of $\Lie{g}$.

Denote by $C_{\pg}([0,\infty))$ the space of continuous functions $\Phi : [0,\infty) \to \C$ with at most polynomial growth, that is, $|\Phi(u)| \leq C(1+u)^N$ for some $C,N \in (0,\infty)$ and all $u \in [0,\infty)$.

\begin{lemma}\label{lem:nu_localisation}
Let $G$ be a Lie group with Lie algebra $\Lie{g}$ of dimension $n$, and let $\exp : \Lie{g} \to G$ be the exponential map. Let $\Omega$ be an open neighbourhood of the origin in $\Lie{g}$ such that $\Omega = -\Omega$ and  $\exp|_\Omega : \Omega \to \exp(\Omega)$ is a diffeomorphism. For all $f \in C_c(\Lie{g})$, $\lambda \in (0,\infty)$, $\alpha \in \R$ and $p \in [1,\infty]$, define $f^{\lambda,p,\alpha} : G \to \C$ by
\begin{equation}\label{eq:nu_associatedp}
f^{\lambda,p,\alpha}(x) = \begin{cases}
\lambda^{-n/p} \Delta(x)^{-\alpha} f(\lambda^{-1} \exp|_\Omega^{-1}(x)) &\text{if } x \in \exp(\Omega),\\
0 &\text{otherwise.}
\end{cases}
\end{equation}
Set also $f^{\lambda,p} = f^{\lambda,p,0}$. Then the following hold.
\begin{enumerate}[label=(\roman*)]
\item\label{en:nu_asspnorm} For all $f \in C_c(\Lie{g})$, $\alpha \in \R$ and $p \in [1,\infty]$,
\begin{equation}\label{eq:nu_asspnormest}
\|f^{\lambda,p,\alpha}\|_{L^p(G)} \leq C_{\alpha,p,\Omega} \, \|f\|_{L^p(\Lie{g})}
\end{equation}
for all $\lambda \in (0,\infty)$, and
\begin{equation}\label{eq:nu_asspnormlim}
\|f^{\lambda,p,\alpha}\|_{L^p(G)} \to \|f\|_{L^p(\Lie{g})}
\end{equation}
as $\lambda \to 0$.

\item\label{en:nu_assinnerprod} For all $k \in \N$, $\alpha_1,\dots,\alpha_k,\beta\in \R$, $f_1,\dots,f_k,g \in C_c(\Lie{g})$,
\begin{equation}\label{eq:nu_innerconv}
\langle f_1^{\lambda,1,\alpha_1} * \cdots * f_k^{\lambda,1,\alpha_k} , g^{\lambda,\infty,\beta} \rangle_{L^2(G)} \to \langle f_1 * \cdots * f_k , g \rangle_{L^2(\Lie{g})} 
\end{equation}
as $\lambda \to 0$.

\item\label{en:nu_assfunccalc} For all $\alpha \in \R$, $f,g,h \in C_c(\Lie{g})$, $\Phi \in C_{\pg}([0,\infty))$,
\begin{equation}\label{eq:nu_continuousfc}
\langle \Phi(\Delta^\alpha L_{(f^{\lambda,1})^* * f^{\lambda,1}} \Delta^\alpha) g^{\lambda,2}, h^{\lambda,2}\rangle_{L^2(G)} \to  \langle\Phi(L_{f^* * f}) g, h\rangle_{L^2(\Lie{g})}
\end{equation}
as $\lambda \to 0$.

\item\label{en:nu_asspower} For all $\alpha \in \R$, $f,g,h \in C_c(\Lie{g})$ and $q \in [0,\infty)$,
\[
\lambda^{-n(q-1)} \langle  |L_{f^{\lambda,\infty}} \Delta^\alpha|^{q} g^{\lambda,1}, h^{\lambda,1}\rangle_{L^2(G)} \to  \langle|L_{f}|^{q} g, h\rangle_{L^2(\Lie{g})}
\]
as $\lambda \to 0$.
\end{enumerate}
\end{lemma}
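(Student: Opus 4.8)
The plan is to prove all four limits by the same mechanism: the rescaled maps $f \mapsto f^{\lambda,p,\alpha}$ transplant functions from $\Lie{g}$ onto $G$ in a way that, as $\lambda \to 0$, turns group convolution into abelian convolution, because near the identity the Baker--Campbell--Hausdorff series shows $\exp(\lambda X)\exp(\lambda Y) = \exp(\lambda(X+Y) + O(\lambda^2))$ and the modular function satisfies $\Delta(\exp(\lambda X)) = 1 + O(\lambda)$. All the expressions on the left-hand sides are multilinear in the (compactly supported, hence bounded) data, the supports shrink into $\exp(\Omega)$, and there the pullback coordinate $Y = \lambda^{-1}\exp|_\Omega^{-1}(x)$ makes every integral over $G$ into an integral over $\Lie{g}$ against the Jacobian of $\exp$ composed with the dilation, which tends to $1$ uniformly on compacta by our normalisation.

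First I would treat part \ref{en:nu_asspnorm}: writing the $L^p(G)$ norm of $f^{\lambda,p,\alpha}$ as an integral over $\exp(\Omega)$, changing variables $x = \exp(\lambda Y)$, and using that $d(\exp_* \mathrm{Leb})/dx$ and $\Delta(\exp(\lambda Y))^{-\alpha p}$ are bounded on a fixed compact set containing $\lambda \cdot \supp f$ for small $\lambda$ (giving the uniform bound \eqref{eq:nu_asspnormest} with $C_{\alpha,p,\Omega}$) and converge to $1$ pointwise (giving \eqref{eq:nu_asspnormlim} by dominated convergence; the case $p = \infty$ is handled separately and is elementary). For part \ref{en:nu_assinnerprod}, I would expand the $k$-fold group convolution as an iterated integral over $G^{k}$, substitute $x_j = \exp(\lambda Y_j)$, and observe that after pulling out the factor $\lambda^{-n}$ from each $f_j^{\lambda,1,\alpha_j}$ and one more from $g^{\lambda,\infty,\beta}$ these cancel against the $k$ Jacobian factors, while the group multiplication $\exp(\lambda Y_1)\cdots\exp(\lambda Y_k)$ rescaled by $\lambda^{-1}$ converges, via BCH, to $Y_1 + \dots + Y_k$, and the modular weights tend to $1$; dominated convergence (all integrands compactly supported, uniformly bounded) then gives the abelian inner product.

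For parts \ref{en:nu_assfunccalc} and \ref{en:nu_asspower} the extra ingredient is that a convolution operator's spectral calculus, respectively its absolute-value-to-a-power, must be controlled. The clean route is to reduce to part \ref{en:nu_assinnerprod}: for $\Phi$ a polynomial, $\Phi(\Delta^\alpha L_{(f^{\lambda,1})^**f^{\lambda,1}}\Delta^\alpha)$ applied to $g^{\lambda,2}$ and paired with $h^{\lambda,2}$ expands, using the identities \eqref{eq:conv_modular} and \eqref{eq:adj_involution}, into a finite sum of expressions of exactly the form in \ref{en:nu_assinnerprod} (with appropriate modular exponents $\alpha_j$ accumulating), so the polynomial case follows; one then passes to general $\Phi \in C_{\pg}$ by a Stone--Weierstrass / polynomial approximation argument on the spectra, which requires a uniform bound on $\|L_{(f^{\lambda,1})^**f^{\lambda,1}}\|$ and on the relevant $L^2(G)$ norms --- both supplied by \ref{en:nu_asspnorm} and Young's inequality. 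Part \ref{en:nu_asspower} is the same idea with $\Phi(u) = u^{q/2}$: write $|L_{f^{\lambda,\infty}}\Delta^\alpha|^{q} = \Phi(\Delta^\alpha L_{(f^{\lambda,\infty})^*}L_{f^{\lambda,\infty}}\Delta^\alpha)$-type expression, note that the product $L_{(f^{\lambda,\infty})^*} L_{f^{\lambda,\infty}}$ carries a factor $\lambda^{-n}$ relative to the normalisation used in \ref{en:nu_asspnorm} for $p=\infty$ (whence the compensating $\lambda^{-n(q-1)}$), and again approximate $u^{q/2}$ by polynomials on the (uniformly bounded) spectrum.

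The main obstacle is the last step in \ref{en:nu_assfunccalc}--\ref{en:nu_asspower}: passing from polynomials to a general continuous $\Phi$ of polynomial growth while the underlying operators live on the $\lambda$-dependent spaces $L^2(G)$ and, as $\lambda \to 0$, ``converge'' only in the weak multilinear sense of \ref{en:nu_assinnerprod}. One cannot simply invoke strong resolvent convergence without care, because the operators $L_{(f^{\lambda,1})^**f^{\lambda,1}}$ are not self-adjoint and their spectra must be confined to a fixed compact subset of $[0,\infty)$ uniformly in $\lambda$ (this uses positivity of $(f^{\lambda,1})^**f^{\lambda,1}$ together with the norm bound). The honest argument is: (a) establish a uniform-in-$\lambda$ operator-norm bound so all spectra sit in a fixed interval $[0,R]$; (b) prove the polynomial case from \ref{en:nu_assinnerprod}; (c) given $\epsilon$, pick a polynomial $P$ with $\sup_{[0,R]}|\Phi - P| < \epsilon$ and estimate $|\langle(\Phi-P)(A_\lambda)g^{\lambda,2},h^{\lambda,2}\rangle| \le \epsilon \|g^{\lambda,2}\|_2\|h^{\lambda,2}\|_2 \le C\epsilon$ uniformly, using the spectral theorem and the uniform $L^2$ bounds from \ref{en:nu_asspnorm}; (d) combine. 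Handling the non-self-adjointness cleanly --- e.g. by working with $L_{f^* * f}$ which \emph{is} self-adjoint and positive, and noting $\Delta^\alpha L_{(f^{\lambda,1})^* * f^{\lambda,1}}\Delta^\alpha$ is self-adjoint with respect to a modified but uniformly equivalent inner product --- is the delicate technical point; everything else is bookkeeping with BCH expansions and dominated convergence.
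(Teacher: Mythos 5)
Your treatment of parts \ref{en:nu_asspnorm} and \ref{en:nu_assinnerprod} (change of variables plus Baker--Campbell--Hausdorff and dominated convergence) is exactly the paper's argument, and your reduction of the polynomial case of \ref{en:nu_assfunccalc} to \ref{en:nu_assinnerprod} via \eqref{eq:conv_modular} and \eqref{eq:adj_involution}, as well as the scaling bookkeeping identifying \ref{en:nu_asspower} with the case $\Phi(u)=u^{q/2}$, also matches. The problem is the passage from polynomials to general $\Phi \in C_{\pg}([0,\infty))$. Your step (a) --- ``establish a uniform-in-$\lambda$ operator-norm bound so all spectra sit in a fixed interval $[0,R]$'' --- is false in the generality of the lemma: when $G$ is nonunimodular and $\alpha \neq 0$, the operator $A_\lambda = \Delta^\alpha L_{(f^{\lambda,1})^* * f^{\lambda,1}} \Delta^\alpha$ is in general an \emph{unbounded} (though nonnegative self-adjoint) operator on $L^2(G)$, because $\Delta^\alpha$ is an unbounded multiplication operator; the paper says this explicitly at the start of its proof of \ref{en:nu_assfunccalc}. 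No norm bound on $\|f^{\lambda,1}\|_{L^1}$ helps here. Consequently the spectral measures $\langle E_{A_\lambda}(\cdot)\, g^{\lambda,2}, h^{\lambda,2}\rangle$ need not be supported in a fixed compact interval, and your step (c), uniform polynomial approximation of $\Phi$ on $[0,R]$, collapses. This is not a peripheral case: in the application (Proposition \ref{prop:nu_loc}\ref{en:hyloc}) the lemma is invoked with $\alpha = 1/q \neq 0$ precisely to handle nonunimodular $G$, which is the reason the modular factors appear in the statement at all. You also misdiagnose the obstacle as non-self-adjointness and suggest a ``modified inner product'': in fact $A_\lambda = (L_{f^{\lambda,1}}\Delta^\alpha)^*(L_{f^{\lambda,1}}\Delta^\alpha)$ is self-adjoint and nonnegative for the standard inner product; unboundedness is the real issue.

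The paper's route avoids any uniform spectral bound. It conjugates $A_\lambda$ by a measure-preserving identification of (almost all of) $G$ with an open subset of $\Lie{g}$ and by the dilation $T_\lambda$, producing self-adjoint operators $\hat A_\lambda$ on the fixed space $L^2(\Lie{g})$ with $\langle \Phi(\hat A_\lambda) g, h\rangle = \langle \Phi(A_\lambda) g^{\lambda,2}, h^{\lambda,2}\rangle$ for small $\lambda$; the polynomial case (powers $N$ and $2N$) gives weak convergence and convergence of norms of $\hat A_\lambda^N g$, hence strong convergence $\hat A_\lambda^N g \to A^N g$ with $A = L_{f^**f}$; since $C_c(\Lie{g})$ is a core for the bounded operator $A$, this yields strong resolvent convergence $\hat A_\lambda \to A$, hence $\Phi(\hat A_\lambda) \to \Phi(A)$ strongly for bounded continuous $\Phi$, and polynomially growing $\Phi$ is handled by writing $\Phi(u) = \tilde\Phi(u)(1+u^N)$ and using the strong convergence of $\hat A_\lambda^N h$. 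Your compact-interval approximation argument does give a correct and somewhat simpler proof in the unimodular case (or for $\alpha = 0$), where $\|A_\lambda\| \leq \|f^{\lambda,1}\|_{L^1}^2$ is uniformly bounded by part \ref{en:nu_asspnorm}; but as written it does not prove the lemma as stated. To repair it along your lines you would need, for instance, to approximate $\Phi(u)/(1+u^N)$ uniformly on all of $[0,\infty)$ by ratios $P(u)/(1+u^N)$ and to control $\|(1+A_\lambda^N)g^{\lambda,2}\|_{L^2}$ uniformly via the polynomial case --- which is essentially a different argument that you have not supplied.
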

\begin{proof}
Let $J : \Lie{g} \to \R$ denote the modulus of the Jacobian determinant of $\exp$, and define $\Delta_e : \Lie{g} \to (0,\infty)$ to be $\Delta \circ \exp$

\ref{en:nu_asspnorm}.
Note that
\[
\|f^{\lambda,p,\alpha}\|_{p}^{p} = \lambda^{-n} \int_\Omega |f(\lambda^{-1} X)|^{p} (J\Delta_e^{-\alpha p})(X) \,dX = \int_{\lambda^{-1}\Omega} |f(X)|^{p}  (J \Delta_e^{-\alpha p})(\lambda X) \,dX.
\]
From this, \eqref{eq:nu_asspnormest} follows (with $C_{\alpha,p,\Omega}^p = \sup_{\Omega} J \Delta_e^{-\alpha p}$), and \eqref{eq:nu_asspnormlim} follows as well because $f$ is compactly supported and $\lim_{X \to 0} (J \Delta_e^{-\alpha p})(X) = J(0) \Delta(e)^{-\alpha p} = 1$.

\ref{en:nu_assinnerprod}.
By the Baker--Campbell--Hausdorff formula,
\[
\exp(X_1) \cdots \exp(X_k) = \exp(X_1 + \dots + X_k + B(X_1,\dots,X_k)),
\]
where $B(X_1,\dots,X_k) = \sum_{m \geq 2} B_m(X_1,\dots,X_k)$ and, for all $m \geq 2$, $B_m(X_1,\dots,X_k)$ is a homogeneous polynomial function of $X_1,\dots,X_k$ of degree $m$; indeed we can find a sufficiently small neighbourhood $\tilde\Omega \subseteq \Omega$ of the origin in $\Lie{g}$ so that, if $X_1,\dots,X_k \in \tilde\Omega$, then $X_1 + \dots + X_k + B(X_1,\dots,X_k) \in \Omega$.

Note that
\begin{multline*}
\langle f_1^{\lambda,1,\alpha_1} * \cdots * f_k^{\lambda,1,\alpha_k} , g^{\lambda,\infty,\beta} \rangle_{L^2(G)} \\
= \int_{G^k} f_1^{\lambda,1,\alpha_1}(x_1) \, \cdots \, f_k^{\lambda,1,\alpha_k}(x_k) \, \overline{g^{\lambda,\infty,\beta}(x_1\cdots x_k)} \,dx_1 \dots \,dx_k
\end{multline*}
If $\lambda$ is sufficiently small that $\bigcup_{j=1}^k \lambda \supp f_j \subseteq \exp(\tilde\Omega)$, then the last integral may be rewritten as
\[
 \int_{\Lie{g}^k} \bar g\Biggl(\sum_{j=1}^k X_j + \lambda^{-1}B(\lambda X_1,\dots,\lambda X_k)\Biggr) \prod_{j=1}^k ( f_j(X_j) (J \Delta_e^{-\alpha_j-\beta})(\lambda X_j) ) \,dX_1\cdots \,dX_k.
\]
Since $\lambda^{-1} B(\lambda X_1,\dots,\lambda X_k) = \lambda \sum_{m \geq 2} \lambda^{m-2} B_m(X_1,\dots,X_k)$ tends to $0$ as $\lambda \to 0$, the last integral tends to
$\langle f_1 * \cdots * f_k , g \rangle_{L^2(\Lie{g})}$.

\ref{en:nu_assfunccalc}.
Note first that $\Delta^\alpha L_{(f^{\lambda,1})^* * f^{\lambda,1}} \Delta^\alpha$ is a nonnegative self-adjoint operator on $L^2(G)$ (which may be unbounded when $G$ is nonunimodular) and that, for all $N \in \N$, the $L^2$-domain of $(\Delta^\alpha L_{(f^{\lambda,1})^* * f^{\lambda,1}} \Delta^\alpha)^N$ contains all compactly supported functions in $L^2(G)$, so the left-hand side of \eqref{eq:nu_continuousfc} is well-defined. Note moreover that
\begin{equation}\label{eq:nu_star}
(f^{\lambda,p,\alpha})^* = (f^*)^{\lambda,p,1-\alpha}
\end{equation}
whence, by \eqref{eq:conv_modular},
\[\begin{split}
&\langle (\Delta^\alpha L_{(f^{\lambda,1})^* * f^{\lambda,1}} \Delta^\alpha)^N g^{\lambda,2}, h^{\lambda,2} \rangle_{L^2(G)} \\ 
&= \left\langle \left(\bigast_{j=1}^N ((f^*)^{\lambda,1,1-(2j-1)\alpha} * f^{\lambda,1,-(2j-1)\alpha}) \right) * g^{\lambda,1,-2N\alpha},  h^{\lambda,\infty} \right\rangle.
\end{split}\]
So, in the case where $\Phi(u) = u^N$ for some $N \in \N$, \eqref{eq:nu_continuousfc} follows from \eqref{eq:nu_innerconv}.

Note that, by shrinking $\Omega$ if necessary, we may assume that $\Omega$ and $\exp(\Omega)$ have compact closures in $\Lie{g}$ and $G$, and moreover the topological boundary of $\exp(\Omega)$ has null Haar measure (indeed shrinking $\Omega$ does not change the left-hand side of \eqref{eq:nu_continuousfc} for $\lambda$ sufficiently small). As in \cite[proof of Theorem 5.2]{martini_joint_2017}, we can now extend the diffeomorphism $\phi := \exp|_\Omega^{-1} : \exp(\Omega) \to \Omega$ to a diffeomorphism $\phi_* : U \to V$, where $U$ and $V$ are open sets in $G$ and $\Lie{g}$ containing $\exp(\Omega)$ and $\Omega$, and moreover $G \setminus U$ has null Haar measure. Finally, let $J_* : V \to (0,\infty)$ be the density of the push-forward via $\phi_*$ of the Haar measure with respect to the Lebesgue measure (so $J_* = J$ on $\Omega$), and define an isometric isomorphism $\Psi : L^2(G) \to L^2(V)$ by
\[
\Psi(F) = (F \circ \phi_*^{-1}) \, J_*^{1/2} .
\]
Since $A_\lambda := \Delta^\alpha L_{(f^{\lambda,1})^* * f^{\lambda,1}} \Delta^\alpha$ is a self-adjoint operator on $L^2(G)$, we can define a self-adjoint operator $\tilde A_\lambda$ on $L^2(\Lie{g}) = L^2(V) \oplus L^2(\Lie{g} \setminus V)$ by
\[
\tilde A_\lambda = \begin{pmatrix} \Psi A_\lambda \Psi^{-1} & 0 \\ 0 & 0 \end{pmatrix}
\]
and another self-adjoint operator $\hat A_\lambda$ on $L^2(\Lie{g})$ by $\hat A_\lambda = T_\lambda^{-1} \tilde A_\lambda T_\lambda$, where $T_\lambda$ is the isometry on $L^2(\Lie{g})$ defined by
\[
T_\lambda f(X) = \lambda^{-n/2} f(X/\lambda).
\]
It is now not difficult to check that, for all $\Phi \in C_{\pg}([0,\infty))$ and $g,h \in C_c(\Lie{g})$,
\begin{equation}\label{eq:nu_conj_fcinner}
\langle \Phi(\hat A_\lambda) g, h \rangle_{L^2(\Lie{g})} = \langle \Phi(A_\lambda) g^{\lambda,2}, h^{\lambda,2} \rangle_{L^2(G)} 
\end{equation}
for all $\lambda$ sufficiently small that $\supp T_\lambda g \cup \supp T_\lambda h \subseteq \Omega$.

For all $N \in \N$, from the cases $\Phi(u) = u^N$ and $\Phi(u) = u^{2N}$ of \eqref{eq:nu_continuousfc} and \eqref{eq:nu_conj_fcinner} it follows that, for all $g,h \in C_c(\Lie{g})$,
\begin{equation}\label{eq:nu_conj_conv}
\langle \hat A_\lambda^N g, h \rangle_{L^2(\Lie{g})} \to \langle A^N g,h \rangle_{L^2(\Lie{g})}, \qquad \| \hat A_\lambda^N g \|_{L^2(\Lie{g})} \to \| A^N g \|_{L^2(\Lie{g})}
\end{equation}
as $\lambda \to 0$, where $A := L_{f^* * f}$. In particular, from this and the density of $C_c(\Lie{g})$ in $L^2(\Lie{g})$ it is not difficult to conclude that, for all $g \in C_c(\Lie{g})$,
\begin{equation}\label{eq:nu_strongpower}
\hat A_\lambda^N g \to A^N g
\end{equation}
in $L^2$-norm as $\lambda \to 0$ \cite[Proposition 3.32]{brezis}. Since $A$ is a bounded self-adjoint operator on $L^2(\Lie{g})$, $C_c(\Lie{g})$ is a core for $A$ and \cite[Theorem 9.16]{weidmann} implies that 
\[
\hat A_\lambda \to A
\]
in the sense of strong resolvent convergence as $\lambda \to 0$. In turn this implies that, for all bounded continuous functions $\Phi : [0,\infty) \to \C$,
\begin{equation}\label{eq:nu_soc}
\Phi(\hat A_\lambda) \to \Phi(A)
\end{equation}
in the sense of strong operator convergence as $\lambda \to 0$ \cite[Theorem 9.17]{weidmann}.

Suppose now that $\Phi \in C_{\pg}([0,\infty))$. Then we can write $\Phi(u) = \tilde\Phi(u) \, (1+u^N)$ for some bounded continuous function $\tilde \Phi : [0,\infty) \to \C$ and $N \in \N$. For all $g,h \in C_c(\Lie{g})$, by \eqref{eq:nu_conj_fcinner},
\[
\langle \Phi(A_\lambda) g^{\lambda,2}, h^{\lambda,2} \rangle_{L^2(G)} = \langle \Phi(\hat A_\lambda) g, h \rangle_{L^2(\Lie{g})} = \langle \tilde\Phi(\hat A_\lambda) g, h \rangle_{L^2(\Lie{g})} + \langle \tilde\Phi(\hat A_\lambda) g, \hat A_\lambda^N h \rangle_{L^2(\Lie{g})}
\]
for all $\lambda$ sufficiently small, and the last quantity tends to
\[
\langle \tilde\Phi(A) g, h \rangle_{L^2(\Lie{g})} + \langle \tilde\Phi(A) g, A^N h \rangle_{L^2(\Lie{g})} = \langle \Phi(A) g, h \rangle_{L^2(\Lie{g})}
\]
as $\lambda \to 0$, by \eqref{eq:nu_strongpower} and \eqref{eq:nu_soc}.

\ref{en:nu_asspower}.
This is just a restatement of part \ref{en:nu_assfunccalc} in the case where $\Phi(u) = u^{q/2}$.
\end{proof}

We can finally prove the enunciated relation between Hausdorff--Young constants of a Lie group and its Lie algebra. We find it convenient to state the result together with its analogue for Young constants, since both follow by the approximation results of Lemma \ref{lem:nu_localisation}. Part \ref{en:hyloc} of Proposition \ref{prop:nu_loc}, together with the following Remark \ref{rmk:symmetry} and the Babenko--Beckner theorem for $\R^n$, prove Theorem \ref{thm:local-HY}.

As in \cite{Siebert-1986}, we define a \emph{contractive automorphism} of a locally compact group $G$ as an automorphism $\tau$ such that $\lim_{k\to\infty} \tau^k(x)=e$ for all $x \in G$.

\begin{proposition}\label{prop:nu_loc}
Let $G$ be a locally compact group.
\begin{enumerate}[label=(\roman*)]
\item\label{en:yloc} For all $p_1,\dots,p_k \in [1,\infty]$ such that $\sum_{j=1}^k 1/p_j' \in [0,1]$,
\begin{equation}\label{eq:ytrivialineq}
Y_{p_1,\dots,p_k}(G) \geq Y_{p_1,\dots,p_k}^\loc(G),
\end{equation}
with equality when $G$ has a contractive automorphism; moreover, if $G$ is a Lie group with Lie algebra $\Lie{g}$,
\begin{equation}\label{eq:ylocineq}
Y_{p_1,\dots,p_k}^\loc(G) \geq Y_{p_1,\dots,p_k}(\Lie{g}).
\end{equation}
\item\label{en:hyloc} 
For all $p \in [1,2]$,
\begin{equation}\label{eq:hytrivialineq}
H_p(G) \geq H_p^\loc(G),
\end{equation}
with equality if $G$ has a contractive automorphism.
Moreover, when $G$ is an $n$-dimensional Lie group with Lie algebra $\Lie{g}$,
\begin{equation}\label{eq:hylocineq}
H_p^\loc(G) \geq H_p(\Lie{g}).
\end{equation}
\end{enumerate}
\end{proposition}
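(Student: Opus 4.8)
The plan is to obtain \eqref{eq:ytrivialineq} and \eqref{eq:hytrivialineq} immediately from the definitions, to handle the equality cases by letting a contractive automorphism play the role of a Euclidean dilation, and to prove the comparison inequalities \eqref{eq:ylocineq} and \eqref{eq:hylocineq} by transplanting $C_c(\Lie{g})$-functions into $G$ via the blow-up of Lemma \ref{lem:nu_localisation}. First, \eqref{eq:ytrivialineq} and \eqref{eq:hytrivialineq} hold because imposing a support condition on the functions in \eqref{eq:nu_young} or \eqref{eq:HY_FLq} can only decrease a best constant; note in passing that $Y_{p_1,\dots,p_k}(G;U)$ and $H_p(G;U)$ are nondecreasing in $U$, so the local constants are the limits over shrinking $U$. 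For the equality statements, suppose $G$ carries a contractive automorphism $\tau$, with module $m>0$. Since automorphisms fix the modular function, $\Delta_G\circ\tau=\Delta_G$, one checks that precomposition with $\tau^{-k}$ preserves the ratios in \eqref{eq:nu_young} and \eqref{eq:HY_FLq}: for the former this follows from $\bigast_j((f_j\circ\tau^{-k})\Delta^{\sum_{l<j}1/p_l'})=m^{-k(k-1)}\bigl(\bigast_j(f_j\Delta^{\sum_{l<j}1/p_l'})\bigr)\circ\tau^{-k}$ together with the homogeneity relation $\sum_j 1/p_j'=1/r'$; for the latter one writes $L_{f\circ\tau^{-k}}\Delta^{1/p'}$ as $m^{-k}$ times a conjugate of $L_f\Delta^{1/p'}$ by the operator $V_k\colon\phi\mapsto m^{-k/2}(\phi\circ\tau^k)$, which is unitary on $L^2(G)$, commutes with multiplication by $\Delta$, and scales the $L^1(G)\to L^\infty(G)$ operator norm by the compensating factor $m^{k}$, and then invokes Proposition \ref{prp:fouriernorm}. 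Since a contractive automorphism carries every compact subset of $G$ into any prescribed neighbourhood of $e$ after enough iterations (see \cite{Siebert-1986}), every $C_c$ function may be replaced by one supported in $U$ with the same ratio, so $Y_{p_1,\dots,p_k}(G;U)\ge Y_{p_1,\dots,p_k}(G)$ and $H_p(G;U)\ge H_p(G)$ for all $U$; with \eqref{eq:ytrivialineq} and \eqref{eq:hytrivialineq} this gives the equalities.

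For \eqref{eq:ylocineq} and \eqref{eq:hylocineq}, fix a neighbourhood $U$ of $e$; shrinking $U$, I may assume $U=\exp(\Omega)$ with $\Omega$ as in Lemma \ref{lem:nu_localisation} (this is enough by the monotonicity of the constrained constants). For the Young case, fix $f_1,\dots,f_k\in C_c(\Lie{g})$, set $\beta_j=\sum_{l<j}1/p_l'$, and put $g_j=f_j^{\lambda,p_j}$, which is supported in $U$ once $\lambda$ is small. A short computation from \eqref{eq:nu_associatedp} gives $g_j\Delta^{\beta_j}=\lambda^{n-n/p_j}f_j^{\lambda,1,-\beta_j}$, hence $\bigast_j(g_j\Delta^{\beta_j})=\lambda^{n/r'}\bigast_j f_j^{\lambda,1,-\beta_j}$ (using $nk-n\sum_j 1/p_j=n\sum_j 1/p_j'=n/r'$). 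Feeding the $g_j$ into the support-constrained form of \eqref{eq:nu_young} on $G$, the right-hand side tends to $Y_{p_1,\dots,p_k}(G;U)\prod_j\|f_j\|_{L^{p_j}(\Lie{g})}$ by Lemma \ref{lem:nu_localisation}\ref{en:nu_asspnorm}, while for the left-hand side I would test $\lambda^{n/r'}\bigast_j f_j^{\lambda,1,-\beta_j}$ against $g^{\lambda,\infty}$ for $g\in C_c(\Lie{g})$; since $g^{\lambda,r'}=\lambda^{-n/r'}g^{\lambda,\infty}$, Lemma \ref{lem:nu_localisation}\ref{en:nu_assinnerprod} yields in the limit the pairing $\langle f_1*\cdots*f_k,g\rangle_{L^2(\Lie{g})}$ divided by $\|g\|_{L^{r'}(\Lie{g})}$, and a supremum over $g$ (by duality, using $\Delta\equiv1$ on $\Lie{g}$) then gives $\liminf_\lambda\lambda^{n/r'}\|\bigast_j f_j^{\lambda,1,-\beta_j}\|_{L^r(G)}\ge\|f_1*\cdots*f_k\|_{L^r(\Lie{g})}$. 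Combining the two estimates, $\|f_1*\cdots*f_k\|_{L^r(\Lie{g})}\le Y_{p_1,\dots,p_k}(G;U)\prod_j\|f_j\|_{L^{p_j}(\Lie{g})}$ for all $f_j\in C_c(\Lie{g})$; since $U$ was arbitrary, a density argument yields \eqref{eq:ylocineq}.

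For \eqref{eq:hylocineq} (where one may assume $p>1$, the case $p=1$ being trivial as $H_1\equiv1$) I would run the same argument through the operator-norm description of Proposition \ref{prp:fouriernorm} and Lemma \ref{lem:nu_localisation}\ref{en:nu_asspower}. Transplant $f\in C_c(\Lie{g})$ by $f^{\lambda,p}=\lambda^{n/p'}f^{\lambda,1}=\lambda^{n/p'-n}f^{\lambda,\infty}$, supported in $U$ for small $\lambda$, and set $S_\lambda:=|L_{f^{\lambda,p}}\Delta^{1/p'}|^{p'}$, so that a computation gives $S_\lambda=\lambda^{-n(p'-1)}|L_{f^{\lambda,\infty}}\Delta^{1/p'}|^{p'}$. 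On one side, the support-constrained form of \eqref{eq:HY_FLq} on $G$ together with Proposition \ref{prp:fouriernorm} gives $\|S_\lambda\|_{L^1(G)\to L^\infty(G)}=\|f^{\lambda,p}\|_{\Four L^{p'}(G)}^{p'}\le H_p(G;U)^{p'}\|f^{\lambda,p}\|_{L^p(G)}^{p'}$, whose right-hand side tends to $H_p(G;U)^{p'}\|f\|_{L^p(\Lie{g})}^{p'}$ by Lemma \ref{lem:nu_localisation}\ref{en:nu_asspnorm}. On the other side, for $g,h\in C_c(\Lie{g})$ one has $|\langle S_\lambda g^{\lambda,1},h^{\lambda,1}\rangle_{L^2(G)}|\le\|S_\lambda\|_{L^1(G)\to L^\infty(G)}\,\|g^{\lambda,1}\|_{L^1(G)}\,\|h^{\lambda,1}\|_{L^1(G)}$, and by Lemma \ref{lem:nu_localisation}\ref{en:nu_asspower} (with $q=p'$, $\alpha=1/p'$) the left-hand side converges to $|\langle|L_f|^{p'}g,h\rangle_{L^2(\Lie{g})}|$, the $L^1(G)$-norms to $\|g\|_{L^1(\Lie{g})}$, $\|h\|_{L^1(\Lie{g})}$; taking $\liminf_\lambda$ and then a supremum over $g,h$ gives $\liminf_\lambda\|S_\lambda\|_{L^1(G)\to L^\infty(G)}\ge\||L_f|^{p'}\|_{L^1(\Lie{g})\to L^\infty(\Lie{g})}=\|f\|_{\Four L^{p'}(\Lie{g})}^{p'}$, the last equality being Proposition \ref{prp:fouriernorm} on $\Lie{g}$. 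Putting the two estimates together yields $\|f\|_{\Four L^{p'}(\Lie{g})}\le H_p(G;U)\|f\|_{L^p(\Lie{g})}$ for all $f\in C_c(\Lie{g})$, hence $H_p(\Lie{g})\le H_p(G;U)$ by density, which is \eqref{eq:hylocineq}.

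The point I expect to require most care is conceptual rather than computational: testing the convolution and operator norms on $G$ against transplanted functions only produces \emph{one-sided} ($\liminf$) control in terms of the Lie-algebra quantities — but this is exactly the direction needed, because $Y^\loc_{p_1,\dots,p_k}(G)$ and $H_p^\loc(G)$ are \emph{infima} of the constrained constants, so it is a lower bound for each $Y_{p_1,\dots,p_k}(G;U)$ and $H_p(G;U)$ that must be established; proving the reverse inequalities would instead need genuinely different, induction-on-scales ideas, as for Young's inequality in \cite{BBBCF_2018}. The remaining work is bookkeeping: the various powers of $\lambda$ and of $\Delta$ generated by the blow-up must cancel, which they do thanks to the homogeneity relations $\sum_j1/p_j'=1/r'$ and $1-1/p'=1/p$, and one uses that $C_c$ test functions suffice to compute the best constants on $\Lie{g}$ (in the few degenerate cases, such as some $p_j=\infty$ or $p=1$, all the constants involved equal $1$ and the statement is trivial).
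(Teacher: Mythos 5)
Your proposal is correct and follows essentially the same route as the paper: the trivial inequalities, the rescaling argument via iterates of the contractive automorphism, and the transplantation of $C_c(\Lie{g})$ functions through Lemma \ref{lem:nu_localisation} (parts \ref{en:nu_asspnorm}, \ref{en:nu_assinnerprod}, \ref{en:nu_asspower}) combined with Proposition \ref{prp:fouriernorm}, with the same cancellation of powers of $\lambda$ and $\Delta$. The only differences are cosmetic: the paper simply records the scaling identity $\| R_\gamma f \|_{\Four L^q(G)} = \kappa_\gamma^{-1/q'} \| f \|_{\Four L^q(G)}$, which you instead derive by unitary conjugation (note you conjugate by $V_k$ where you mean its inverse, and you use $k$ both for the number of functions and the iterate of $\tau$, but these slips do not affect the argument).
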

\begin{proof}
\ref{en:yloc}.
The first inequality is obvious.
Moreover, in case $G$ has a contractive automorphism, the reverse inequality follows from a scaling argument. Indeed, for all automorphisms $\gamma$ of $G$, there exists $\kappa_\gamma \in (0,\infty)$ such that the push-forward via $\gamma$ of the Haar measure on $G$ is $\kappa_\gamma$ times the Haar measure. So, if $R_\gamma f = f \circ \gamma^{-1}$, then
\[
\| R_\gamma f \|_{L^p(G)} = \kappa_\gamma^{-1/p} \| f\|_{L^p(G)}, \quad R_\gamma \Delta = \Delta, \quad R_\gamma \left( \bigast_{j=1}^k f_j \right) = \kappa_\gamma^{k-1} \bigast_{j=1}^k R_\gamma f_j,
\]
whence it is immediate that both sides of Young's inequality \eqref{eq:nu_young} are scaled by the same factor when each $f_j$ is replaced with $R_\gamma f_j$. Now, by density, the value of the best constant $Y_{p_1,\dots,p_k}(G)$ may be determined by testing \eqref{eq:nu_young} on arbitrary $f_1,\dots,f_k \in C_c(G)$. Moreover, if $\tau$ is a contractive automorphism of $G$ and $U$ is any neighbourhood of the identity, then, for all compact subsets $K \subseteq G$, there exists $N \in \N$ such that $\tau^N(K) \subseteq U$ \cite[Lemma 1.4(iv)]{Siebert-1986}; in particular, for all $f_1,\dots,f_k \in C_c(G)$, by taking $\gamma = \tau^N$ for sufficiently large $N \in \N$, we see that $\supp R_\gamma f_j \subseteq U$. This shows that $Y_{p_1,\dots,p_k}(G) \leq Y_{p_1,\dots,p_k}(G;U)$ for all neighbourhoods $U$ of $e \in G$, and consequently $Y_{p_1,\dots,p_k}(G) \leq Y_{p_1,\dots,p_k}^\loc(G)$.

As for the second inequality, let $U$ be an arbitrary neighbourhood of $e \in G$.
To conclude, it is sufficient to show that $Y_{p_1,\dots,p_k}(\Lie{g}) \leq Y_{p_1, \dots, p_k}(G;U)$.

Let $r \in [1,\infty]$ be defined by $\sum_j 1/p_j' = 1/r'$. Consider $g,f_1,\dots,f_k \in C_c(\Lie{g})$.
For all $\lambda \in (0,\infty)$, $\alpha \in \C$ and $p\in[1,\infty]$, define $g^{\lambda,p},f_j^{\lambda,p},f_j^{\lambda,p,\alpha}$ as in Lemma \ref{lem:nu_localisation}.
Then
$\bigcup_{j=1}^k \supp f_j^{\lambda,1} \subseteq U$
for all sufficiently small $\lambda$,
and therefore, by \eqref{eq:nu_young},
\begin{multline*}
\left\langle 
\bigast_{j=1}^k (f_j^{\lambda,1} \Delta^{\sum_{l=1}^{j-1} 1/p_l'})
, g^{\lambda,\infty} \right\rangle_{L^2(G)} \\
\leq Y_{p_1, \dots, p_k}(G;U) \, \| f_1^{\lambda,1} \|_{L^{p_1}(G)} \cdots \|f_k^{\lambda,1} \|_{L^{p_k}(G)} \|g^{\lambda,\infty} \|_{L^{r'}(G)}.
\end{multline*}
Note that $\sum_{j=1}^k 1/p_j + 1/r' = k$.
So the last inequality can be rewritten as
\begin{multline*}
\left\langle f_1^{\lambda,1,\alpha_1} * \dots * f_k^{\lambda,1,\alpha_k} , g^{\lambda,\infty} \right\rangle_{L^2(G)} \\
\leq Y_{p_1, \dots, p_k}(G;U) \, \| f_1^{\lambda,p_1} \|_{L^{p_1}(G)} \cdots \|f_k^{\lambda,p_k} \|_{L^{p_k}(G)} \|g^{\lambda,r'} \|_{L^{r'}(G)},
\end{multline*}
where $\alpha_j = -\sum_{l=1}^{j-1} 1/p_l'$. Hence, by Lemma \ref{lem:nu_localisation}, by taking the limit as $\lambda \to 0$, we obtain
\[
\langle f_1 * \dots * f_k , g \rangle_{L^2(\Lie{g})} \leq Y_{p_1, \dots, p_k}(G;U) \, \| f_1\|_{L^{p_1}(\Lie{g})} \cdots \|f_k \|_{L^{p_k}(\Lie{g})} \|g \|_{L^{r'}(\Lie{g})}.
\]
The arbitrariness of $f_1,\dots,f_k,g \in C_c(\Lie{g})$ implies that $Y_{p_1,\dots,p_k}(\Lie{g}) \leq Y_{p_1, \dots, p_k}(G;U)$.

\ref{en:hyloc}.
Much as in part \ref{en:yloc},
the first inequality is obvious, and equality follows from a rescaling argument when $G$ has a contractive automorphism, since
\[
\| R_\gamma f \|_{\Four L^q(G)} = \kappa_\gamma^{-1/q'} \| f \|_{\Four L^q(G)}
\]
for all automorphisms $\gamma$ of $G$.

As for the second inequality, we need to show that $H_p(\Lie{g}) \leq H_p(G;U)$ for all neighbourhoods $U$ of $e \in G$.
Set $q = p'$ and note that, by \eqref{eq:nu_fouriernorm},
\[
\|f\|_{\Four F^{q}(G)}^{q}
= \sup_{\|g\|_{L^1(G)}, \|h\|_{L^1(G)} \leq 1} \langle |L_f \Delta^{1/q}|^{q} g, h \rangle_{L^2(G)}.
\]

For $\lambda \in (0,\infty)$, $r \in [1,\infty]$ and $f,g,h \in C_c(\Lie{g})$, we define $f^{\lambda,r},g^{\lambda,r},h^{\lambda,r} : G \to \C$ as in Lemma \ref{lem:nu_localisation}.
For all sufficiently small $\lambda$, $\supp f^{\lambda,r} \subseteq U$ and therefore
\[
\langle |L_{f^{\lambda,\infty}} \Delta^{1/q}|^{q} g^{\lambda,1}, h^{\lambda,1} \rangle_{L^2(G)} \leq H_p(G;U)^{q} \|f^{\lambda,\infty} \|_{L^p(G)}^{q} \|g^{\lambda,1}\|_{L^1(G)} \|h^{\lambda,1}\|_{L^1(G)},
\]
that is,
\begin{multline*}
\lambda^{-n(q-1)} \langle |L_{f^{\lambda,\infty}} \Delta^{1/q}|^{q} g^{\lambda,1}, h^{\lambda,1} \rangle_{L^2(G)}\\
 \leq H_p(G;U)^{q} \|f^{\lambda,p} \|_{L^p(G)}^{q} \|g^{\lambda,1}\|_{L^1(G)} \|h^{\lambda,1}\|_{L^1(G)}.
\end{multline*}
As $\lambda \to 0$, by Lemma \ref{lem:nu_localisation} we then deduce that
\[
\langle |L_{f}|^{q} g, h \rangle_{L^2(\Lie{g})} \leq H_p(G;U)^{q} \|f \|_{L^p(\Lie{g})}^{q} \|g\|_{L^1(\Lie{g})} \|h\|_{L^1(\Lie{g})}.
\]
By the arbitrariness of $g,h \in C_c(\Lie{g})$,
\[
\|f\|_{\Four L^q(\Lie{g})} \leq H_p(G;U) \|f\|_{L^p(\Lie{g})}
\]
and finally, by the arbitrariness of $f \in C_c(\Lie{g})$, $H_p(\Lie{g}) \leq H_p(G;U)$.
\end{proof}

\begin{remark}\label{rmk:symmetry}
The argument in Proposition \ref{prop:nu_loc} can be extended to the case of inequalities restricted to particular classes of functions on $G$.
In particular, suppose that the class of functions is determined by invariance with respect to the action of a compact group $K$ of automorphisms of $G$.
Then it is possible to choose a positive inner product on $\Lie{g}$ so that $K$ acts on $\Lie{g}$ by isometries (take any inner product on $\Lie{g}$ and average it with respect to the action of $K$), and the correspondence \eqref{eq:nu_associatedp} preserves $K$-invariance whenever $\Omega$ is a ball centred at the origin.
Moreover the class of functions on $\Lie{g}$ under consideration contains all radial functions.
Since the extremisers for Young and Hausdorff--Young constants on $\Lie{g}$ are centred gaussians \cite{Bec-1975,brascamp_best_1976}, which may be assumed to be radial, the resulting lower bounds do not change. This observation completes the proof of Theorem \ref{thm:local-HY}.
\end{remark}

\begin{remark}\label{rmk:Y_HY}
While the inequalities \eqref{eq:ytrivialineq} and \eqref{eq:hytrivialineq} may be strict for certain Lie groups $G$ (note that, when $G$ is compact, the global Young and Hausdorff--Young constants are equal to $1$), it appears natural to ask whether the inequalities \eqref{eq:ylocineq} and \eqref{eq:hylocineq} are actually equalities. We are not aware of any counterexample. As a matter of fact, a particular case of a recent result of Bennett, Bez, Buschenhenke, Cowling and Flock about nonlinear Brascamp--Lieb inequalities \cite{BBBCF_2018} entails that equality \emph{always} holds in \eqref{eq:ylocineq} for all Lie groups $G$:
\[
Y_{p_1,\dots,p_k}^\loc(G) = Y_{p_1,\dots,p_k}(\Lie{g})
\]
for \emph{all} $p_1,\dots,p_k \in [1,\infty]$ such that $\sum_{j=1}^k 1/p_j' \in [0,1]$.
By Proposition \ref{prop:basic}\ref{en:yheq}, this in turn implies that
\[
H_p^\loc(G) = H_p(\Lie{g}) = (B_p)^{\dim G}
\]
for all $p \in [1,2]$ such that $p'$ is an even integer, and \emph{a fortiori} the same equality holds for the $K$-invariant version of the constants for any compact group of automorphisms $K$.
\end{remark}

As a consequence of the above results, we strengthen some results of Klein and Russo \cite[Corollaries 2.5' and 2.8]{KR-1978}, where upper bounds for Young and Hausdorff--Young constants are obtained for particular solvable Lie groups.
Klein and Russo explicitly remark that they are able to obtain equalities instead of upper bounds in the particular case of the Heisenberg groups and only for special exponents (through a different argument, involving the analysis of the Weyl transform) and seem to leave the general case open.
Here instead we obtain equality for all the Young constants, as well as a lower bound for the Hausdorff--Young constants (which becomes an equality in the case of Babenko's exponents).

\begin{corollary}
Let $G$ be a $n$-dimensional solvable Lie group admitting a chain of closed subgroups
\[
\{e\} = G_0 < \dots < G_n = G,
\]
where $G_j$ is normal in $G_{j+1}$ and $G_{j+1}/G_j$ is isomorphic to $\R$.
Denote by $B_p$ the Babenko--Beckner constant.
Then the following hold.
\begin{enumerate}[label=(\roman*)]
\item\label{en:nu_ysolvable} for all $p_1,\dots,p_k,r \in [1,\infty]$ such that $\sum_{j=1}^k 1/p_j' = 1/r'$,
\[
Y_{p_1,\dots,p_k}(G) = Y^\loc_{p_1,\dots,p_k}(G) = (B_{r'} B_{p_1} \cdots B_{p_k})^n;
\]
\item\label{en:nu_hysolvable} for all $p \in [1,2]$,
\[
H_p(G) \geq H_p^\loc(G) \geq (B_p)^n,
\]
with equalities if $p' \in 2\Z$.
\end{enumerate}
\end{corollary}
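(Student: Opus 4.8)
The plan is to sandwich each of the constants between a lower bound coming from the Lie algebra and an upper bound coming from the chain of subgroups, and then to observe that the two bounds agree.

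For \ref{en:nu_ysolvable} the lower bound requires nothing new: the underlying additive group of $\Lie{g}$ is $\R^n$, so Proposition \ref{prop:nu_loc}\ref{en:yloc} gives $Y^\loc_{p_1,\dots,p_k}(G) \geq Y_{p_1,\dots,p_k}(\Lie{g}) = Y_{p_1,\dots,p_k}(\R^n)$, and by the sharp Euclidean Young inequality of Beckner and Brascamp--Lieb \cite{Bec-1975,brascamp_best_1976} this equals $(B_{r'} B_{p_1}\cdots B_{p_k})^n$. Since $Y_{p_1,\dots,p_k}(G) \geq Y^\loc_{p_1,\dots,p_k}(G)$ trivially, everything reduces to the matching upper bound $Y_{p_1,\dots,p_k}(G) \leq (B_{r'} B_{p_1}\cdots B_{p_k})^n$. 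I would prove this, for \emph{every} Lie group admitting a chain of the stated kind, by induction on the length $m$ of the chain. The base case $m = 0$ is the trivial group, where the constant is $1$. For the inductive step, set $N = G_{m-1}$: it is a closed normal subgroup, $G/N$ is isomorphic to $\R$ (the top link of the chain), and $N$ inherits a chain of length $m-1$, so the extension inequality of Proposition \ref{prop:basic}\ref{en:ext} and the inductive hypothesis give $Y_{p_1,\dots,p_k}(G) \leq Y_{p_1,\dots,p_k}(N)\, Y_{p_1,\dots,p_k}(\R) \leq (B_{r'}B_{p_1}\cdots B_{p_k})^{m-1} \cdot (B_{r'}B_{p_1}\cdots B_{p_k})$, using once more the one-dimensional case of the Beckner--Brascamp--Lieb inequality. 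Applied to $G$ itself this closes part \ref{en:nu_ysolvable}.

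For \ref{en:nu_hysolvable} the chain of inequalities $H_p(G) \geq H_p^\loc(G) \geq H_p(\Lie{g}) = (B_p)^n$ is Proposition \ref{prop:nu_loc}\ref{en:hyloc} together with the Babenko--Beckner theorem on $\R^n$. For the reverse inequality when $p' \in 2\Z$, I would write $p' = 2k$ and feed part \ref{en:nu_ysolvable} into the Young--Hausdorff--Young conversion of Proposition \ref{prop:basic}\ref{en:yheq}: choosing $p_1 = \dots = p_k = p$, the exponent $r$ appearing in \eqref{eq:nu_young} is determined by $1/r' = \sum_j 1/p_j' = k/p' = 1/2$, so $r' = 2$ and the Babenko--Beckner factor $B_{r'} = B_2$ equals $1$; hence part \ref{en:nu_ysolvable} yields $Y_{p,\dots,p}(G) = Y^\loc_{p,\dots,p}(G) = (B_p)^{kn}$, and Proposition \ref{prop:basic}\ref{en:yheq} turns this into $H_p(G) = H_p^\loc(G) = (B_p)^n$. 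The degenerate value $p = 2$ (that is, $p' = 2$) is not covered by that conversion but is trivial, since $H_2(G) = 1 = (B_2)^n$ by Plancherel.

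I do not expect a genuine obstacle here: all the analytic content has already been packaged --- the extension property for Young constants under quotients, the comparison with the Lie algebra via the blow-up technique of Lemma \ref{lem:nu_localisation}, the passage between even-integer Hausdorff--Young exponents and multilinear Young constants, and the sharp Euclidean constants. The only thing needing attention is the exponent bookkeeping, specifically checking that with all $p_j = p$ and $p' = 2k$ one gets $r' = 2$, so that the spurious factor $B_{r'}$ collapses to $1$ and the multilinear Young constant on $G$ reduces back to exactly the expected power of $B_p$.
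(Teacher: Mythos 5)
Your proposal is correct and follows essentially the same route as the paper: the upper bound for the Young constants by iterating Proposition \ref{prop:basic}\ref{en:ext} along the chain with the sharp one-dimensional Euclidean constant, the lower bounds from Proposition \ref{prop:nu_loc} and the Babenko--Beckner theorem on $\Lie{g} \cong \R^n$, and the conversion to Hausdorff--Young constants for $p' \in 2\Z$ via Proposition \ref{prop:basic}\ref{en:yheq}. Your explicit treatment of the degenerate case $p'=2$ via Plancherel is a small point the paper leaves implicit, but it is not a different argument.
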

\begin{proof}
\ref{en:nu_ysolvable}.
The inequality $Y_{p_1,\dots,p_k}(G) \leq (B_{r'} B_{p_1} \cdots B_{p_k})^n$ can be obtained, as in \cite{KR-1978}, by iteratively applying Proposition \ref{prop:basic}\ref{en:ext} and the fact that $Y_{p_1,\dots,p_k}(\R) = B_{r'} B_{p_1} \cdots B_{p_k}$ \cite{Bec-1975,brascamp_best_1976}.
On the other hand, by Propositions \ref{prop:nu_loc}\ref{en:yloc} and \ref{prop:basic}\ref{en:ext},
\[
Y_{p_1,\dots,p_k}(G) \geq Y^\loc_{p_1,\dots,p_k}(G) \geq Y_{p_1,\dots,p_k}(\Lie{g}) = Y_{p_1,\dots,p_k}(\R)^n = (B_{r'} B_{p_1} \cdots B_{p_k})^n,
\]
and we are done.

\ref{en:nu_hysolvable}.
From part \ref{en:nu_ysolvable} and Proposition \ref{prop:basic}\ref{en:yheq}, we deduce immediately  that $H_p(G) =  (B_p)^n$ whenever ${{q}}$ is an even integer.
On the other hand, by Proposition \ref{prop:nu_loc}\ref{en:hyloc},
\[
H_{p}(G) \geq H^\loc_{p}(G) \geq H_{p}(\Lie{g}) = (B_p)^n,
\]
by \cite{Bec-1975}, and we are done.
\end{proof}

\section{The \texorpdfstring{$n$}{n}-torus \texorpdfstring{$\T^n$}{Tn} revisited}\label{s:torus}

The proof of the central local Hausdorff--Young theorem on a compact Lie group mimics that of the local Hausdorff--Young theorem on $\T^n$, and we present this case first to make the proof of the general case more evident.

\begin{proof}[Proof of Theorem \ref{thm:local-HY-Tn}]
There is no loss of generality in supposing functions smooth; this ensures that all the sums and integrals that occur in the proof below converge.

Let us identify $\T^n$ with the subset $(-1/2, 1/2]^n$ of $\R^n$. For $f \in L^1(\T^n)$, the Fourier transform $\hat f : \Z^n \to \C$ of $f$ is given by
\[
\hat f(\mu) = \int_{\T^n} f(x) \, e^{2\pi i \mu \cdot x} \, dx.
\]
for all $\mu \in \Z^n$.
We denote by $V$ the open subset $(-1/2,1/2)^n$ of $\R^n$.
For any function $f \in L^1(\T^n)$ such that $\supp f \subseteq V$, we define $F$ on $\R^n$ by
\[
F(x) =
\begin{cases}
 f(x) & \text{when $x \in V$,} \\
 0     & \text{otherwise;}
\end{cases}
\]
we say that $F$ corresponds to $f$.
Clearly $F \in L^1(\R^n)$ and $\hat F|_{\Z^n} = \hat f$; further, if $f$ is smooth, so is $F$.
We are going to transfer the sharp Hausdorff--Young theorem for $F$ to $f$.

The Plancherel formulae for Fourier series and Fourier integrals imply that
\[
\| \hat f \|_{\ell^2(\Z^n)} = \| f \|_{L^2(\T^n)} = \| F \|_{L^2(\R^n)} = \| \hat F \|_{L^2(\R^n)}  .
\]
In particular, since $\hat F|_{\Z^n} = \hat f$,
\begin{equation}\label{eq:restriction}
\| \hat F|_{\Z^n}  \|_{\ell^2(\Z^n)} \leq \| \hat F \|_{L^2(\R^n)}  .
\end{equation}
Further, trivially,
\[
\|\hat F|_{\Z^n}  \|_{\ell^\infty(\Z^n)} \leq \| \hat F \|_{L^\infty(\R^n)}.
\]
If we could interpolate between these inequalities, then it would follow that
\begin{equation}\label{eq:key}
\| \hat F|_{\Z^n}  \|_{\ell^q(\Z^n)} \leq \| \hat F \|_{L^q(\R^n)}
\end{equation}
for all $q \in [2,\infty]$ and $\hat F$ in $L^{{{q}}}(\R^n)$, whence
\[
\| \hat f \|_{\ell^{p'}(\Z^n)} = \| \hat F|_{\Z^n} \|_{\ell^{p'}(\Z^n)} \leq \| \hat F \|_{L^{p'}(\R^n)} \leq (B_p)^n \| F \|_{L^p(\R^n)} = (B_p)^n \| f \|_{L^p(\T^n)} ,
\]
and we would be done.
But we can \emph{not} interpolate, because \eqref{eq:restriction} does not hold for all $\hat F$ in $L^{2}(\R^n)$, or even for all $\hat F$ in a dense subspace of $L^{2}(\R^n)$, but only for those $\hat F$ where $\supp F \subseteq V$; \emph{inter alia}, this ensures that $\hat F$ is smooth so that $\hat F|_{\Z^n}$ is well-defined.
So we prove a variant of \eqref{eq:key}.

Let $U$ be a small neighbourhood $U$ of $0$ in $\T^n$ such that $\overline U \subseteq V$, and take $\phi \in A(\R^n)$ such that $\supp \phi \subseteq V $ and $\phi(x) = 1$ for all $x \in U$.
We now define
\[
T G =  (\hat\phi * G)|_{\Z^n}
\qquad\forall G \in L^1(\R^n) + L^\infty(\R^n).
\]
We claim that when $q \in [2, \infty]$,
\begin{equation}\label{eq:key2}
\|  TG \|_{\ell^q(\Z^n)}  \leq  \| \hat\phi \|_{L^1(\R^n)} \| G \|_{L^q(\R^n)}
\qquad\forall G \in L^{q}(\R^n).
\end{equation}

To prove the claim, observe that the inverse Fourier transform of $\hat\phi * G$ is supported in $V$, whence
\begin{equation*}
\| TG \|_{\ell^2(\Z^n)}
 =    \| (\hat\phi * G)|_{\Z^n} \|_{\ell^2(\Z^n)}
\leq  \|  \hat\phi * G \|_{L^2(\R^n)}
\leq \| \hat\phi \|_{L^1(\R^n)} \| G \|_{L^2(\R^n)},
\end{equation*}
for all $G \in L^{2}(\R^n)$, by \eqref{eq:restriction}  and a standard convolution inequality.
Similarly, since $\hat\phi * G$ is continuous, the same inequalities hold when $2$ is replaced by $\infty$.
Thus \eqref{eq:key2} holds when $q$ is $2$ or $\infty$.
The Riesz--Thorin interpolation theorem establishes \eqref{eq:key2} for all $q \in [2, \infty]$.

To conclude the proof, take $f \in C^\infty(\T^n)$ such that $\supp f \subseteq U$, and let $F$ correspond to $f$.
Then $\hat F \in L^1(\R^n) \cap L^{\infty}(\R^n)$ and $\hat\phi * \hat F = \hat F$.
Thus
\[
\| \hat f \|_{\ell^q(\Z^n)} = \|  T\hat F  \|_{\ell^q(\Z^n)}  \leq  \| \hat\phi \|_{L^1(\R^n)} \| \hat F \|_{L^q(\R^n)}
\]
by \eqref{eq:key2}.
This now gives
\[\begin{split}
\|  \hat f \|_{\ell^{p'}(\Z^n)}
&\leq  \| \hat\phi \|_{L^1(\R^n)} \| \hat F \|_{L^{p'}(\R^n)}\\
&\leq  \| \hat\phi \|_{L^1(\R^n)} (B_p)^n \| F \|_{L^p(\R^n)}
=  \| \hat\phi \|_{L^1(\R^n)} (B_p)^n \| f \|_{L^p(\Z^n)}.
\end{split}\]
This proves that $H_p(\T^n;U) \leq \|\hat\phi\|_{L^1(\R^n)} (B_p)^n$.

By choosing $U$ small enough, we may make $\|\hat\phi \|_{L^1(\R^n)}$ as close to $1$ as we like (see \cite{leptin}): indeed, we can take $\phi = |K|^{-1} \chrfn_{U+K} * \chrfn_K$, where $K=-K$ is a fixed small neighbourhood of the origin (here $\chrfn_\Omega$ denotes the characteristic function of a measurable set $\Omega \subseteq \R^n$ and $|\Omega|$ its Lebesgue measure), so that $\supp \phi \subseteq U +2K$ and
\[
1 = \phi(0) \leq \|\hat\phi\|_{L^1(\R^n)} \leq |K|^{-1} \|\chrfn_K\|_{L^2(\R^n)} \|\chrfn_{U+K}\|_{L^2(\R^n)} = (|U+K|/|K|)^{1/2}.
\]
So $H_p^\loc(\T^n) \leq (B_p)^n$, and the converse inequality is given by Theorem \ref{thm:local-HY}.
\end{proof}

\section{Compact Lie groups}\label{s:compact}

Before entering into the proof of Theorem \ref{thm:local-central-HY-compact-Lie}, we present a summary of the theory of representations and characters of compact connected Lie groups $G$.
For more details, the reader may
consult, for example, \cite{BtD_1985,Knapp-2002}. We assume throughout that $G$ is not abelian, since the abelian case was treated in Theorem \ref{thm:local-HY-Tn}.

A compact connected Lie group $G$ comes with a set $\Lambda^+$ of 
\emph{dominant weights}, which 
parametrise the collection of irreducible unitary representations $\pi_\lambda$ of $G$ modulo equivalence.
Each such representation $\pi_\lambda$ is of finite dimension $d_\lambda$ and has a character $\chi_\lambda$ given by $\trace \pi_\lambda(\cdot)$.

Assume that the Haar measure on $G$ is normalised so as to have total mass $1$.
The Peter--Weyl theory gives us the
Plancherel formula: if $f \in L^2(G)$, then
\[
\| f\|_2^2 = \sum_{\lambda \in \Lambda^+} d_\lambda \| \pi_\lambda(f) \|_\HS^2  .
\]
In other words, the group Plancherel measure on the unitary dual of $G$ can be identified with the discrete measure on $\Lambda^+$ that assigns mass $d_\lambda$ to the point $\lambda$.
From the discussion in Section \ref{s:FLq}, we deduce that
\[
\| f \|_{\Four L^q} = \left( \sum_{\lambda \in \Lambda^+} d_\lambda \| \pi_\lambda(f) \|_{\Sch^q}^q \right)^{1/q}.
\]
for all $q \in [1,\infty)$. If $f$ is a central function, then $\pi_\lambda(f)$ is a multiple of the identity and
\[
\tilde f(\lambda) := \int_G f(x) \, \chi_\lambda(x) \,dx = \trace \pi_\lambda(f),
\]
whence
\[
\| f \|_{\Four L^q} = 
\left( \sum_{\lambda \in \Lambda^+} d_\lambda^{2-q} |\tilde f(\lambda)|^q \right)^{1/q}.
\]
For $q=2$, this corresponds to the fact that the characters $\chi_\lambda$ form an orthonormal basis for the space of square-integrable central functions.

A more precise description of the set $\Lambda^+$ of dominant weights and the characters $\chi_\lambda$ can be given as follows.
Recall that the conjugation action of the group $G$ on itself determines the adjoint representation of $G$ on $\Lie{g}$:
\[
\exp( \Ad(x) Y) = x \exp(Y) x^{-1}
\qquad\forall x \in G \quad\forall Y \in \Lie{g}.
\]
Since $G$ is compact, there exists an $\Ad(G)$-invariant inner product on $\Lie{g}$, which in turn determines a Lebesgue measure on $\Lie{g}$; we scale the inner product so that the Jacobian determinant $J : \Lie{g} \to \R$ of the exponential mapping is $1$ at the origin. Clearly $J$ is an $\Ad(G)$-invariant function.

The group $G$ contains a maximal torus $T$, that is, a maximal closed connected abelian subgroup, which is unique up to conjugacy; its Lie algebra $\Lie{t}$ is a maximal abelian Lie subalgebra of $\Lie{g}$.
The set $\Gamma$ of $X$ in $\Lie{t}$ such that $\exp X = e$ is a lattice in $\Lie{t}$, and $T$ may be identified with $\Lie{t} / \Gamma$. The \emph{weight lattice} $\Lambda$ is the dual lattice to $\Gamma$, that is, the set of elements $\lambda$ of the dual space $\Lie{t}^*$ taking integer values on $\Gamma$: equivalently, $\Lambda$ is the set of the $\lambda \in \Lie{t}^*$ such that $X \mapsto e^{2\pi i \lambda(X)}$ descends to a character $\kappa_\lambda$ of $T$. We say that a weight $\lambda \in \Lambda$ occurs in a unitary representation $\pi$ of $G$ if the character $\kappa_\lambda$ of $T$ is contained in the restriction of $\pi$ to $T$. Weights occurring in the (complexified) adjoint representation are called \emph{roots}.
A choice of an ordering splits roots into 
into \emph{positive} and \emph{negative} roots. We denote by $\rho$ half the sum of the positive roots. The set $\Lambda^+$ of \emph{dominant weights} is the set of the $\lambda \in \Lambda$ having nonnegative inner product with all positive roots. 
The irreducible representation $\pi_\lambda$ of $G$ corresponding to $\lambda \in \Lambda^+$ is determined, up to equivalence, by the fact that $\lambda$ is the highest weight occurring in $\pi_\lambda$ (that is, $\lambda$ occurs in $\pi_\lambda$, while $\lambda + \alpha$ does not occur in $\pi_\lambda$ for any positive root $\alpha$).

Via the orthogonal projection of $\Lie{g}$ onto $\Lie{t}$, we can identify $\Lie{t}^*$ with a subspace of $\Lie{g}^*$. Given $\lambda$ in $\Lie{g}^*$, we write $O_{\lambda}$ for the compact set $\Ad(G)^* \lambda$, usually called the \emph{orbit} of $\lambda$. Kirillov's character formula \cite[p.\ 459]{Kir} states that, for 
all $X \in \Lie{g}$ 
and all $\lambda \in \Lambda^+$,
\begin{equation}\label{eq:kirillov}
J(X)^{1/2} \, \chi_\lambda( \exp( X) ) =  \int_{O_{\lambda+\rho}} \exp( 2 \pi i \xi \cdot X) \,d\sigma(\xi),
\end{equation}
where $\sigma$ is a canonical $\Ad(G)^*$-invariant measure on $O_{\lambda+\rho}$, and $\xi \cdot X$ denotes the duality pairing between $\xi \in \Lie{g}^*$ and $X \in \Lie{g}$.
When $X = 0$, this formula becomes the normalisation
\[
\int_{O_{\lambda+\rho}} \,d\sigma(\xi) = d_\lambda .
\]

\begin{proof}[Proof of Theorem \ref{thm:local-central-HY-compact-Lie}]
Take a small connected conjugation-invariant neighbourhood $U$ of the identity in $G$ that is also symmetric, that is, $U^{-1} = U$.
Then $U = \bigcup_{x \in G} x (U \cap T) x^{-1}$.
Let $V$ be the small connected neighbourhood of $0$ in $\Lie{g}$ such that $U = \exp V$ and
$\exp$ is a diffeomorphism from a neighbourhood of $\overline V$ onto a neighbourhood of $\overline U$ in $G$.

To a function $f$ on $G$ supported in $U$, we associate the function $F$ on $\Lie{g}$ supported in $V$ by the formula
\[
F(X)  =
\begin{cases}
J(X)^{1/2} \, f(\exp(X))   &\text{when $X \in V$,} \\
0                     &\text{otherwise}.
\end{cases}
\]
Then 
$\| J^{1/p -1/2} F \|_p = \| f  \|_p$.
We define the Fourier transform $\hat F$ of $F$ as follows:
\[
\hat F (\xi ) = \int_{\Lie{g}} F(X) \, \exp( 2\pi i \xi \cdot X) \, dX \qquad  \forall \xi \in \Lie{g}^*.
\]
The following conditions are equivalent: $f$ is central on $G$; $F$ is $\Ad(G)$-invariant on $\Lie{g}$; and $\hat F$ is $\Ad(G)^*$-invariant on $\Lie{g}^*$.

Assume that $f$ is central and supported in $U$, and let $F$ be the associated function on $\Lie{g}$. From the character formula \eqref{eq:kirillov}, a change of variables, and a change of order of integration,
\begin{align*}
\tilde f(\lambda)
&= \int_G f(x) \, \chi_\lambda(x) \,dx
  = \int_{\Lie{g}}  F(X) \int_{O_{\lambda+\rho}} \exp( 2 \pi i \xi \cdot X) \,d\sigma(\xi)\,dX \\
&=  \int_{O_{\lambda+\rho}}  \int_{\Lie{g}} F(X) \exp(2 \pi i \xi \cdot X) \,dX\, d\sigma(\xi)
  =  \int_{O_{\lambda+\rho}}  \hat F(\xi) \, d\sigma(\xi)
 = d_\lambda \hat F(\lambda+\rho).
\end{align*}
This, combined with the Plancherel theorems for central functions on $G$ and for functions on $\Lie{g}$, implies that
\[
\sum_{\lambda \in \Lambda^+}   d_\lambda^2 |\hat F(\lambda+\rho)|^2
= \| f\|_2^2
= \| F \|_2^2 = \| \hat F \|_2^2.
\]
For such functions, moreover, $\hat F$ is continuous and so
\[
\sup_{\lambda \in \Lambda^+} | \hat F (\lambda + \rho) |_\infty \leq \| \hat F \|_\infty.
\]

For a function $H$ on $\Lie{g}^*$, we define
\[
H^G(\lambda) = \int_G H(\Ad(g)^*\lambda) \, dg.
\]
Much as in the case of $\T^n$, we choose an $\Ad(G)$-invariant function $\phi \in A(\Lie{g})$ which vanishes off $V$ and takes the value $1$ on the open $\Ad(G)$-invariant subset $W$ of $V$.
For $H$ in $L^1(\Lie{g}^*) + L^\infty(\Lie{g}^*)$, we define the function $TH$ by
\[
TH(\lambda) = \hat\phi*H^G(\lambda + \rho)
\qquad\forall\lambda \in \Lambda^+.
\]
For such functions $H$, the inverse Fourier transform $F$ of $\hat\phi*H^G$ is supported in $V$ and is $\Ad(G)$-invariant, so the corresponding function $f$ on $G$ is central and supported in $U$.
From our previous discussion,
\[
\left( \sum_{\lambda \in \Lambda^+}  d_\lambda^2 |  TH (\lambda)|^2 \right)^{1/2}
 = \| \hat\phi* H^G \|_{2}
 \leq \|\hat\phi\|_1 \| H^G \|_2 \leq \|\hat\phi\|_1 \| H \|_2
\]
and
\[
 \sup_{\lambda \in \Lambda^+}  | TH (\lambda)| \leq \| TH \|_\infty \leq \|\hat\phi\|_1 \| H^G \|_\infty \leq \|\hat\phi\|_1 \| H \|_\infty .
\]
By Riesz--Thorin interpolation, when $2 \leq q < \infty$,
\[
\left( \sum_{\lambda \in \Lambda^+}  d_\lambda^2 |  TH (\lambda)|^q \right)^{1/q}
 \leq \|\hat\phi\|_1 \| H \|_q .
\]

Much as in the proof of Theorem \ref{thm:local-HY-Tn}, if $f$ is a central function on $G$ supported in $\exp(W) \subseteq U$, and $F$ is the $\Ad(G)$-invariant function on $\Lie{g}$ corresponding to $f$, then
$T\hat F(\lambda) = \hat\phi * \hat F(\lambda+\rho) = \hat F(\lambda+\rho)$
 for all $\lambda \in \Lambda^+$. Hence, if $n=\dim G$, from the Hausdorff--Young inequality on $\R^n$ we deduce that
\begin{multline*}
\|f\|_{\Four L^{p'}}= \left( \sum_{\lambda \in \Lambda^+}  d_\lambda^{2-p'} |   \tilde f(\lambda) |^{p'} \right)^{1/{p'}}
  =  \left( \sum_{\lambda \in \Lambda^+}  d_\lambda^2 |   \hat F (\lambda+\rho) |^{p'} \right)^{1/{p'}} \\
\leq \| \hat\phi \|_1 \| \hat F \|_{p'} 
\leq \| \hat\phi \|_1 (B_p)^n  \| F \|_p
\leq \| \hat\phi \|_1 (B_p)^n \sup_{X \in W} J(X)^{1/2-1/p} \| f \|_p,
\end{multline*}
which shows that $H_{p,\Inn(G)}(G;\exp(W)) \leq \| \hat\phi \|_1 (B_p)^n \sup_{X \in W} J(X)^{1/2-1/p}$.
By taking $W$ small, we may make both $\sup_{X \in W} J(X)^{1/2-1/p}$ and $\| \hat\phi \|_1$ close to $1$. So $H_{p,\Inn(G)}^\loc(G) \leq (B_p)^n$, and the converse inequality is given by Theorem \ref{thm:local-HY}.
\end{proof}

\section{The Weyl transform}\label{s:weyl}

In this section, we shall  mostly adopt the notation from Folland's book \cite{Folland-1989}. 
The Weyl transform $\rho(f)$ of a function $f \in L^1(\C^n)$
can be written as the operator
\[
\rho(f) =\int_{\R^n} \int_{\R^n} f(u+iv) \, e^{2\pi i(uD+vX)}\, du \,dv
\]
on $L^2(\R^n)$,
where $uD=\sum_{j=1}^n u_j D_j$ and $vX=\sum_{j=1}^n v_j X_j$,  and where $D_j$ and $X_j$ denote the operators
\[
D_j\phi(x)=\frac 1{2\pi i}\frac \partial{\partial x_j} f(x)  \qquad\text{and}\qquad X_j\phi(x)=x_j\phi(x).
\]
Explicitly, $\rho(f)$ is the integral operator given by
\[
\rho(f) \phi(x)=\int_{\R^n} K_f(x,y) \, \phi(y) \, dy,
\]
with integral kernel given by
\[
K_f(x,y)=\int_{\R^n} f(y-x+iv) \, e^{\pi i v(x+y)} \, dv.
\]
As Folland points out on page 24 of his monograph, this notion of ``Weyl transform'' is historically incorrect---the Weyl transform of $f$ should rather be
$\rho(\hat f)$,  the pseudodifferential operator associated to  the symbol $f$ in the Weyl calculus \cite[Chapter 2]{Folland-1989}.
Nevertheless, we shall use the definition of Weyl transform above.

In \cite{KR-1978}, the authors consider the operator $\nu(f)$ given by
\[
\nu(f) =\int_{\R^n}\int_{\R^n} f(u +iv) \, e^{2\pi iuD} \, e^{2\pi ivX}\, du \, dv ,
\]
and call this the Weyl operator associated to $f$---this appears to be even more inappropriate, as $\nu(f)$ is actually more closely related to the Kohn--Nirenberg calculus (see, for example, \cite[(2.32)]{Folland-1989}). In any case, it is easily seen that the operators $\nu(f)$ and $\rho(f)$ are related by the identity
\begin{equation}\label{eq:nurho}
\nu(f)=\rho(e^{i\pi u \cdot v} f)
\end{equation}
 (compare also \cite[Proposition 2.33]{Folland-1989}).

We are interested in best constants in Hausdorff--Young inequalities of the form
\begin{equation}\label{eq:hyweyl}
\|\rho(f)\|_{\Sch^{p'}}\leq C \|f\|_{L^p(\C^n)},
\end{equation}
for suitable functions $f$,  for instance Schwartz functions.
In light of \eqref{eq:nurho}, we may work with $\nu(f)$ in place of $\rho(f)$ equally well. As discussed in the introduction, we denote by $W_p(\C^n)$ the best constant $C$ in \eqref{eq:hyweyl}, and use the symbols $W_p^\loc(\C^n)$, $W_{p,K}(\C^n)$ and $W_{p,K}^\loc(\C^n)$ for the corresponding local and $K$-invariant variants.

If $p=2$,  then  $\rho$ is indeed isometric from $L^2(\C^{n})$ onto the space of Hilbert--Schmidt operators \cite[Theorem (1.30)]{Folland-1989}, and thus the following ``Plancherel identity'' for the Weyl transform holds true:
\begin{equation}\label{eq:weyl_plancherel}
\|\rho(f)\|_{\HS}=\|f\|_2.
\end{equation}
This tells us that $W_2(\C^n) = 1$ and, by interpolation, $W_p(\C^n) \leq 1$ for all $p \in [1,2]$.

However, as Klein and Russo have shown, $W_p(\C^n) < 1$ when $1<p<2$.
Indeed, \cite[Theorem 1]{KR-1978} may be restated by saying that
\begin{equation}\label{eq:weylconstant}
W_p(\C^n) = (B_p)^{2n}
\end{equation}
when $p'\in 2\Z$.
Moreover, in contrast with the Euclidean case,
there are no extremal functions for the optimal estimate---the best constant can only be found as a limit, for instance along a suitable family of Gaussian functions $f$. This raises the question whether \eqref{eq:weylconstant} holds
for more general $p\in[1,2]$.

Besides being of interest in its own right, the determination of the best constants in the Hausdorff--Young inequality \eqref{eq:hyweyl} for the Weyl transform on $\C^n$ is relevant to the analysis of the analogous inequality on the Heisenberg group $\Heis_n$. Indeed, the proof of Klein and Russo \cite[Theorem 3]{KR-1978} that
\begin{equation}\label{eq:heisconstant}
H_p(\Heis_n) = (B_p)^{2n+1}
\end{equation}
when $p' \in 2\Z$ is based on a reduction, via a scaling argument, to the corresponding result \eqref{eq:weylconstant} for the Weyl transform. A somewhat refined version of the scaling argument, presented below, shows that the problem of determining the best Hausdorff--Young constants for the Heisenberg group is completely equivalent to the analogous problem for the Weyl transform, irrespective of the exponent $p \in [1,2]$, and also in case of restriction to functions with symmetries.

\begin{proposition}\label{prp:weylheisenberg}
For all compact subgroups $K$ of $\group{U}(n)$ and all $p \in [1,2]$,
\[
H_{p,K}(\Heis_n) = B_p W_{p,K}(\C^n).
\]
\end{proposition}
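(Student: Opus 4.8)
The plan is to factor the group Fourier transform on $\Heis_n=\C^n\times\R$ through the partial Fourier transform in the central variable followed by the Weyl transform, and then to transfer the Hausdorff--Young inequality for the Weyl transform, picking up the extra factor $B_p$ from the one-dimensional Fourier transform in the centre. For $F\in C_c^\infty(\Heis_n)$ and $\hbar\in\R\setminus\{0\}$, write $F^\hbar(z)=\int_\R F(z,t)\,e^{2\pi i\hbar t}\,dt$. With the conventions of \cite{Folland-1989}, the Schr\"odinger representation satisfies $\pi_\hbar(F)=\rho_\hbar(F^\hbar)$, where $\rho_\hbar$ is the Weyl transform rescaled by the Planck parameter $\hbar$ (so $\rho_1=\rho$), and the Plancherel theorem for $\Heis_n$ (recall $\Heis_n$ is unimodular and type I) identifies $\Four L^{p'}(\Heis_n)$ isometrically with the direct integral of $\Sch^{p'}(L^2(\R^n))$ against $|\hbar|^n\,d\hbar$, so that
\[
\|F\|_{\Four L^{p'}(\Heis_n)}^{p'}=\int_\R\|\rho_\hbar(F^\hbar)\|_{\Sch^{p'}}^{p'}\,|\hbar|^n\,d\hbar .
\]
Since the dilation $(z,t)\mapsto(rz,r^2t)$ is an automorphism of $\Heis_n$, the representation $\pi_\hbar\circ\delta_r$ has central character $e^{2\pi i\hbar r^2t}$ and so, by the Stone--von Neumann theorem, is equivalent to $\pi_{\hbar r^2}$ through a (metaplectic) unitary; unwinding this for $r=|\hbar|^{1/2}$ gives
\[
\|\rho_\hbar(g)\|_{\Sch^{p'}}=|\hbar|^{-n}\,\|\rho(g\circ\delta_{|\hbar|^{-1/2}})\|_{\Sch^{p'}},\qquad\|g\circ\delta_{|\hbar|^{-1/2}}\|_{L^p(\C^n)}=|\hbar|^{n/p}\,\|g\|_{L^p(\C^n)},
\]
where $\delta_r(z)=rz$ now denotes the \emph{isotropic} dilation on $\C^n$; note that this dilation commutes with $\group{U}(n)\supseteq K$, so it preserves $K$-invariance. (For $\hbar<0$ one uses in addition $\pi_\hbar\cong\overline{\pi_{-\hbar}}$, which affects none of the norms above.)

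To prove $H_{p,K}(\Heis_n)\le B_p\,W_{p,K}(\C^n)$, let $F\in C_c^\infty(\Heis_n)$ be $K$-invariant. Then each $F^\hbar\circ\delta_{|\hbar|^{-1/2}}$ is $K$-invariant, so by the Weyl Hausdorff--Young inequality \eqref{eq:hyweyl} and the scaling identities, $\|\rho_\hbar(F^\hbar)\|_{\Sch^{p'}}\le|\hbar|^{-n/p'}W_{p,K}(\C^n)\,\|F^\hbar\|_{L^p(\C^n)}$; substituting into the Plancherel formula above, the powers of $|\hbar|$ cancel and
\[
\|F\|_{\Four L^{p'}(\Heis_n)}^{p'}\le W_{p,K}(\C^n)^{p'}\int_\R\|F^\hbar\|_{L^p(\C^n)}^{p'}\,d\hbar .
\]
Since $p\le p'$, Minkowski's integral inequality bounds the last integral by $\bigl(\int_{\C^n}\bigl(\int_\R|F^\hbar(z)|^{p'}\,d\hbar\bigr)^{p/p'}\,dz\bigr)^{p'/p}$; but for each fixed $z$ the function $\hbar\mapsto F^\hbar(z)$ is the Euclidean Fourier transform of $t\mapsto F(z,t)$, so the Babenko--Beckner theorem on $\R$ gives $\int_\R|F^\hbar(z)|^{p'}\,d\hbar\le B_p^{p'}\|F(z,\cdot)\|_{L^p(\R)}^{p'}$. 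Integrating in $z$ yields $\|F\|_{\Four L^{p'}(\Heis_n)}\le B_p\,W_{p,K}(\C^n)\,\|F\|_{L^p(\Heis_n)}$, which gives the bound since $C_c^\infty$ functions are dense.

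For the reverse inequality, fix $\varepsilon>0$, choose a $K$-invariant Schwartz function $g$ on $\C^n$ with $\|\rho(g)\|_{\Sch^{p'}}\ge(W_{p,K}(\C^n)-\varepsilon)\|g\|_{L^p(\C^n)}$, and for $\lambda>0$ set $F_\lambda(z,t)=g(z)\,h_\lambda(t)$ with $h_\lambda(t)=e^{-2\pi it}e^{-\pi\lambda t^2}$, which is $K$-invariant. Then $F_\lambda^\hbar=\widehat{h_\lambda}(\hbar)\,g$ with $\widehat{h_\lambda}(\hbar)=\lambda^{-1/2}e^{-\pi(\hbar-1)^2/\lambda}$ a Gaussian concentrating at $\hbar=1$, so
\[
\|F_\lambda\|_{\Four L^{p'}(\Heis_n)}^{p'}=\int_\R|\widehat{h_\lambda}(\hbar)|^{p'}\,\|\rho_\hbar(g)\|_{\Sch^{p'}}^{p'}\,|\hbar|^n\,d\hbar,\qquad\|F_\lambda\|_{L^p(\Heis_n)}^{p}=\|g\|_{L^p(\C^n)}^{p}\,(\lambda p)^{-1/2}.
\]
As $\lambda\to0$ the probability densities $|\widehat{h_\lambda}(\hbar)|^{p'}\,d\hbar/\|\widehat{h_\lambda}\|_{L^{p'}(\R)}^{p'}$ form an approximate identity at $\hbar=1$, while $\hbar\mapsto\|\rho_\hbar(g)\|_{\Sch^{p'}}^{p'}|\hbar|^n$ is bounded and continuous at $\hbar=1$ (a routine estimate via the scaling identities and interpolation between $\|\rho(\cdot)\|_{\Sch^2}=\|\cdot\|_{L^2}$ and $\|\rho(\cdot)\|_{\Sch^\infty}\le\|\cdot\|_{L^1}$); hence, after dividing by $\|F_\lambda\|_{L^p(\Heis_n)}^{p'}$ and observing that the leftover power of $\lambda$ vanishes precisely because $B_p^{p'}=p^{p'/(2p)}(p')^{-1/2}$, one obtains
\[
\lim_{\lambda\to0}\frac{\|F_\lambda\|_{\Four L^{p'}(\Heis_n)}^{p'}}{\|F_\lambda\|_{L^p(\Heis_n)}^{p'}}=B_p^{p'}\,\frac{\|\rho(g)\|_{\Sch^{p'}}^{p'}}{\|g\|_{L^p(\C^n)}^{p'}}\ge\bigl(B_p\,(W_{p,K}(\C^n)-\varepsilon)\bigr)^{p'}.
\]
Letting $\varepsilon\to0$ gives $H_{p,K}(\Heis_n)\ge B_p\,W_{p,K}(\C^n)$.

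The main obstacle is the bookkeeping of the first paragraph: fixing the normalisations so that the group Fourier transform on $\Heis_n$ really is the partial Fourier transform in the centre composed with the Planck-parameter Weyl transform, and checking that the Stone--von Neumann rescaling of $\rho_\hbar$ uses the \emph{isotropic} dilation of $\C^n$ (it is this isotropy, inherited from the fact that the dilations act as scalars on the first layer, that keeps the argument compatible with every compact $K\subseteq\group{U}(n)$). Once this is in place, the upper bound is an exact cancellation of powers of $|\hbar|$ followed by Minkowski's inequality and the one-dimensional Hausdorff--Young inequality, and the lower bound is a standard Gaussian concentration; in particular there is no loss at any step, which is why the identity, rather than merely an inequality, holds.
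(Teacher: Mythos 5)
Your proposal is correct and follows essentially the same route as the paper: the upper bound via the Plancherel decomposition $\|F\|_{\Four L^{p'}}^{p'}=\int\|\pi_\lambda(F)\|_{\Sch^{p'}}^{p'}|\lambda|^n\,d\lambda$, the scaling identity relating $\pi_\lambda(F)$ to $\rho$ of a dilated $F^\lambda$ (with the powers of $|\lambda|$ cancelling), Minkowski's integral inequality and the sharp one-dimensional Hausdorff--Young inequality; and the lower bound via tensor products with modulated gaussians concentrating in frequency, an approximate-identity limit using continuity of $\lambda\mapsto\|\rho(Z_\lambda f)\|_{\Sch^{p'}}$, and the exact gaussian extremality $\|\hat\phi\|_{p'}=B_p\|\phi\|_p$. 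The only cosmetic difference is that the paper derives the pointwise bound $B_p\|\rho(f)\|_{\Sch^{p'}}\le H_{p,K}(\Heis_n)\|f\|_p$ for every $K$-invariant $f$ (handling $\lambda<0$ via $\|\rho(Z_{-\lambda}f)\|_{\Sch^q}=\|\rho(Z_\lambda f)\|_{\Sch^q}$) rather than starting from a near-extremiser, which is an equivalent formulation.
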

\begin{proof}
Let us identify $\Heis_n$ with $\C^n \times \R$ with group law
\[
(z,t) \cdot (z',t') = (z+z',t+t'+\Im(\bar z \cdot z')/2).
\]
The Lebesgue measure on $\C^n \times \R$ is a Haar measure on $\Heis_n$, which we fix throughout.
The Schr\"odinger representation $\pi$ of $\Heis_n$ on $L^2(\R^n)$ is given by
\[
\pi(u+iv,t) \phi(x) = e^{2\pi i t + 2\pi i v \cdot x + \pi i u \cdot v} \phi(u+x)
\]
\cite[(1.25)]{Folland-1989}.
For all $\lambda \in \R \setminus \{0\}$, the map $A_\lambda : \Heis_n \to \Heis_n$, given by
\[
A_\lambda(z,t) = \begin{cases}
(\sqrt{|\lambda|} \, z, \lambda t) &\text{if $\lambda > 0$,}\\
(\sqrt{|\lambda|} \, \bar z, \lambda t) &\text{if $\lambda < 0$},
\end{cases}
\]
is an automorphism of $\Heis_n$. The representations $\pi_\lambda = \pi \circ A_\lambda$
form a family of pairwise inequivalent irreducible unitary representations of $\Heis_n$, in terms of which we can express the Plancherel formula for $\Heis_n$:
\[
\| F \|^2_{L^2(\Heis_n)} = \int_{\R \setminus \{0\}} \| \pi_\lambda(F) \|_{\HS}^2 \, |\lambda|^n \,d\lambda. 
\]
\cite[p.\ 39]{Folland-1989}. Hence, by the discussion in Section \ref{s:FLq}, for all $q \in [1,\infty)$,
\begin{equation}\label{eq:FLq_Heis}
\| F \|^q_{\Four L^q} = \int_{\R \setminus \{0\}} \| \pi_\lambda(F) \|_{\Sch^q}^q \, |\lambda|^n \,d\lambda. 
\end{equation}

For all $F \in L^1(\Heis_n)$ and $\lambda \in \R$, let us set
\[
F^\lambda(z) = \int_\R F(z,t) \, e^{2\pi i t \lambda} \,dt.
\]
Then
\begin{equation}\label{eq:Heis_rep_weyl}
\pi_\lambda(F) = \rho(Z_\lambda F^\lambda),
\end{equation}
where, for all functions $f$ on $\C^n$,
\[
Z_\lambda f(z) = \begin{cases}
|\lambda|^{-n} f(|\lambda|^{-1/2} z) & \text{if $\lambda>0$,}\\
|\lambda|^{-n} f(|\lambda|^{-1/2} \bar z)  & \text{if $\lambda<0$.}
\end{cases}
\]

From the definition of $\rho$, it is not difficult to show that
\[
\rho(Z_{-1} f) = S \rho(f)^* S,
\]
where $S f(z) = f^*(z) = \overline{f(-z)}$. From this it readily follows that 
\begin{equation}\label{eq:weyl_conj}
\|\rho(Z_{-\lambda} f)\|_{\Sch^q} = \|\rho(Z_\lambda f)\|_{\Sch^q}
\end{equation}
for all $\lambda \in \R \setminus \{0\}$ and $q \in [1,\infty]$.

Let $F \in C^\infty_c(\Heis_n)$ be $K$-invariant. Then $Z_\lambda F^\lambda$ is also $K$-invariant for all $\lambda >0$. Hence, by \eqref{eq:FLq_Heis}, \eqref{eq:Heis_rep_weyl} and \eqref{eq:weyl_conj},
\[\begin{split}
\|F\|_{\Four L^{p'}} 
 &= \left( \int_{\R \setminus \{0\}} \| \rho(Z_{|\lambda|} F^\lambda) \|_{\Sch^{p'}}^{p'} \, |\lambda|^n \,d\lambda \right)^{1/p'} \\
&\leq W_{p,K}(\C^n) \left( \int_{\R \setminus \{0\}} \| Z_{|\lambda|} F^\lambda \|_{p}^{p'} \, |\lambda|^n \,d\lambda \right)^{1/p'} \\
&= W_{p,K}(\C^n) \left( \int_{\R \setminus \{0\}} \| F^\lambda \|_{p}^{p'}  \,d\lambda \right)^{1/p'} \\
&\leq W_{p,K}(\C^n) \left( \int_{\C^n} \left( \int_\R | F^\lambda(z) |^{p'}  \,d\lambda \right)^{p/p'} \,dz \right)^{1/p} \\
&\leq W_{p,K}(\C^n) B_p \|F\|_p,
\end{split}\]
where we applied, in order, the sharp Hausdorff--Young inequality for the Weyl transform and $K$-invariant functions, a scaling, the Minkowski integral inequality (note that $p'/p \geq 1$) and the sharp Hausdorff--Young inequality on $\R$. This shows that $H_{p,K}(\Heis_n) \leq B_p W_{p,K}(\C^n)$.

Conversely, let $f \in C^\infty_c(\C^n)$ be $K$-invariant and $\phi : \R \to \C$ be in the Schwartz class, and let $F = f \otimes \phi$. Then $F$ is also $K$-invariant, and moreover $F^\lambda = \hat\phi(\lambda) f$. So, by applying the sharp Hausdorff--Young inequality on $\Heis_n$ to $F$ we obtain that
\begin{equation}\label{eq:test_sharp_HY_heis_tensor}
\left( \int_{\R \setminus \{0\}} \| \rho(Z_\lambda f) \|_{\Sch^{p'}}^{p'} \, |\hat\phi(\lambda)|^{p'} |\lambda|^n \,d\lambda \right)^{1/p'} 
\leq H_{p,K}(\Heis_n) \|f\|_p \|\phi\|_p.
\end{equation}
For $\mu \in (0,\infty)$ and $\lambda_0 \in \R \setminus \{0\}$, take 
\[
\phi(t) = e^{-\pi \mu t^2 - 2\pi i t \lambda_0},
\]
so that
\[
\hat\phi(\lambda) = \mu^{-1/2} e^{-(\pi/\mu) (\lambda-\lambda_0)^2} \qquad\text{and}\qquad  \|\hat \phi\|_{p'} = B_p \|\phi\|_p,
\]
since gaussians are extremal functions for the Hausdorff--Young inequality on $\R$.
With this choice of $\phi$, the inequality \eqref{eq:test_sharp_HY_heis_tensor} can be rewritten as
\[
B_p ( R_f * \Phi_\mu (\lambda_0) )^{1/p'} \leq H_{p,K}(\Heis_n) \|f\|_p,
\]
where $*$ denotes convolution on $\R$ and
\[
R_f(\lambda) = \| \rho(Z_\lambda f) \|_{\Sch^{p'}}^{p'} |\lambda|^n, \qquad \Phi_\mu(\lambda) = \frac{e^{-(\pi p'/\mu) \lambda^2}}{\int_\R e^{-(\pi p'/\mu) s^2} \,ds}.
\]
Note that $\lambda \mapsto Z_\lambda f$ is continuous $\R \setminus \{0\} \to L^p(\C^n)$ and $\rho : L^p(\C^n) \to \Sch^{p'}(L^2(\R^n))$ is continuous too, 
 so $R_f$ is a continuous function on $\R \setminus \{0\}$. Moreover $\Phi_\mu$ is an approximate identity as $\mu \to 0$. Hence, by taking the limit as $\mu \to 0$,
we obtain
\[
B_p \sup_{\lambda \in\R \setminus \{0\}} \| \rho(Z_\lambda f) \|_{\Sch^{p'}}  |\lambda|^{n/p'} \leq  H_{p,K}(\Heis_n) \|f\|_p,
\]
which for $\lambda = 1$ gives
\[
B_p \| \rho(f) \|_{\Sch^{p'}}  \leq  H_{p,K}(\Heis_n) \|f\|_p,
\]
that is, $B_p W_{p,K}(\C^n) \leq H_{p,K}(\Heis_n)$.
\end{proof}

Let us come back to the question whether the identity \eqref{eq:weylconstant}  holds for arbitrary $p \in [1,2]$.
The following result, which allows for arbitrary $p$ but restricts the class of functions $f$ and, regrettably, also requires a weight in the $p$-norm, gives another indication that this might be true.
Recall that a function $f$ on $\C^{n}$ is \emph{polyradial} if
\[
f(z) = f_0(|z_1|,\dots,|z_n|),
\]
or, equivalently, if $f$ is invariant under the $n$-fold product group $\group{U}(1) \times \dots \times \group{U}(1)$.

\begin{proposition}\label{prop:locweyl} If $f\in C_c^\infty(\C^{n})$ is polyradial, then, for all $p \in [1,2]$,
\begin{equation}\label{eq:hyweyl2}
\|\rho(f)\|_{\Sch^{p'} }\leq (B_p)^{2n}\|f e^{(\pi/2) |\cdot|^2}\|_{L^p(\C^{n})}.
\end{equation}
\end{proposition}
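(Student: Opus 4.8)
The plan is to exploit that polyradial functions form a commutative subalgebra (a Gelfand-pair situation): the Weyl transform of such a function is diagonalised by the Hermite basis, its eigenvalue sequence turns out to be a family of averages of the Euclidean Fourier transform of $f e^{(\pi/2)|\cdot|^2}$ against explicit probability measures, and \eqref{eq:hyweyl2} then drops out of Jensen's inequality together with the Babenko--Beckner theorem on $\R^{2n}\cong\C^n$.

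\emph{Step 1 (diagonalisation).} The torus $\group{U}(1)^n$ rotates the coordinates of $\C^n$, and the Schr\"odinger representation intertwines this action with the joint action of the phase rotations $e^{i\theta_j N_j}$ on $L^2(\R^n)$, where $N_j$ is the $j$-th harmonic oscillator and $N_j h_\alpha = \alpha_j h_\alpha$ on the Hermite basis $\{h_\alpha\}_{\alpha\in\N^n}$. Hence, for polyradial $f$, $\rho(f)$ commutes with every $N_j$, so $\rho(f) h_\alpha = c_\alpha(f)\, h_\alpha$. A direct computation with the integral kernel $K_f$ gives $\rho(e^{-(\pi/2)|\cdot|^2}) = P_0$, the rank-one projection onto $\C h_0$, and more generally $\rho(g_\alpha) = P_\alpha$, where $g_\alpha(z) = \prod_j L_{\alpha_j}(\pi|z_j|^2)\, e^{-(\pi/2)|z_j|^2}$ and $L_k$ is the $k$-th Laguerre polynomial (the classical description of the Weyl transform on polyradial functions; see \cite{Folland-1989}). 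By \eqref{eq:weyl_plancherel} the system $\{g_\alpha\}$ is orthonormal, and completeness of the Laguerre polynomials makes it an orthonormal basis of the polyradial subspace of $L^2(\C^n)$; therefore $c_\alpha(f) = \langle f, g_\alpha\rangle$ up to conjugation, and $\|\rho(f)\|_{\Sch^{p'}}^{p'} = \sum_\alpha |c_\alpha(f)|^{p'}$.

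\emph{Step 2 (Fourier averages).} Put $G = f e^{(\pi/2)|\cdot|^2} \in C_c^\infty(\C^n)$, so $f = G e^{-(\pi/2)|\cdot|^2}$; then $c_\alpha(f) = \langle G, \mathcal{L}_\alpha\rangle$ with $\mathcal{L}_\alpha(z) = g_\alpha(z)\, e^{-(\pi/2)|z|^2} = \prod_j L_{\alpha_j}(\pi|z_j|^2)\, e^{-\pi|z_j|^2}$, and hence $c_\alpha(f) = \langle \hat G, \widehat{\mathcal{L}_\alpha}\rangle$ by Parseval on $\R^{2n}$. The crucial point is the explicit formula, which I would obtain from the generating identity $\sum_k L_k(s)t^k = (1-t)^{-1} e^{-st/(1-t)}$ and the Gaussian Fourier transform on $\R^2$ (the Fourier transform of $(1-t)^{-1} e^{-\pi|w|^2/(1-t)}$ is $e^{-\pi(1-t)|\xi|^2}$, and one matches Taylor coefficients in $t$):
\[
\widehat{\mathcal{L}_\alpha}(\xi) = \prod_j \frac{\pi^{\alpha_j}}{\alpha_j!}\, |\xi_j|^{2\alpha_j}\, e^{-\pi|\xi_j|^2} \ \geq\ 0 .
\]
Thus $d\mu_\alpha := \widehat{\mathcal{L}_\alpha}(\xi)\,d\xi$ is a probability measure on $\R^{2n}$ (since $\int \widehat{\mathcal{L}_\alpha} = \mathcal{L}_\alpha(0) = 1$), and summing the exponential series coordinatewise gives the resolution of identity $\sum_\alpha \widehat{\mathcal{L}_\alpha} \equiv 1$.

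\emph{Step 3 (conclusion) and main obstacle.} Since $p' \geq 2$, the map $t \mapsto t^{p'}$ is convex, so Jensen's inequality against each $\mu_\alpha$, summation over $\alpha$, and Tonelli (using $\sum_\alpha \widehat{\mathcal{L}_\alpha} \equiv 1$) give
\[
\sum_\alpha |c_\alpha(f)|^{p'} = \sum_\alpha \Bigl|\int \hat G \, d\mu_\alpha\Bigr|^{p'} \leq \sum_\alpha \int |\hat G|^{p'}\, d\mu_\alpha = \int_{\R^{2n}} |\hat G(\xi)|^{p'}\, d\xi ,
\]
whence $\|\rho(f)\|_{\Sch^{p'}} \leq \|\hat G\|_{L^{p'}(\R^{2n})}$ (for $p=1$ this is the trivial bound $\sup_\alpha |c_\alpha(f)| \leq \|\hat G\|_\infty \leq \|G\|_1$), and the Babenko--Beckner theorem on $\R^{2n}$ finishes: $\|\hat G\|_{L^{p'}} \leq (B_p)^{2n}\|G\|_{L^p} = (B_p)^{2n}\|f e^{(\pi/2)|\cdot|^2}\|_{L^p(\C^n)}$. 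I expect the main obstacle to be precisely the positivity $\widehat{\mathcal{L}_\alpha} \geq 0$ together with the identity $\sum_\alpha \widehat{\mathcal{L}_\alpha} \equiv 1$: this is what converts the reduction to the Euclidean Fourier transform into an honest averaging with no loss at any exponent, and it is also what forces the exponent $\pi/2$ in the weight (through the normalisation $\rho(e^{-(\pi/2)|\cdot|^2}) = P_0$, which pins down the $g_\alpha$). The rest is bookkeeping: getting the constants in the Laguerre-function description of Step 1 exactly right.
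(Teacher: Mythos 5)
Your proof is correct and essentially the paper's: the same diagonalisation of $\rho(f)$ in the Hermite/Laguerre basis, the same probability measures with densities $\prod_j\frac{\pi^{\alpha_j}|\xi_j|^{2\alpha_j}}{\alpha_j!}e^{-\pi|\xi_j|^2}$ (which the paper derives from a Laguerre--Bessel identity rather than the generating function), and the same reduction to the Babenko--Beckner bound on $\R^{2n}$ for $\hat G$. Your Jensen-plus-Tonelli step merely replaces the paper's Riesz--Thorin interpolation of $\phi\mapsto\breve\phi$, and both rest on exactly the two facts you highlight, namely that the $\mu_\alpha$ are probability measures and that $\sum_\alpha\widehat{\mathcal{L}_\alpha}\equiv 1$.
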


As observed in the introduction, this inequality implies that $W^\loc_{p,K}(\C^n) \leq (B_p)^{2n}$ for $K = \group{U}(1) \times \dots \times \group{U}(1)$, and \emph{a fortiori} also for any larger group $K$.

\begin{proof}
We present a proof of Proposition \ref{prop:locweyl} which follows the philosophy of the proof of Theorem \ref{thm:local-central-HY-compact-Lie}. The key is the  following identity relating Laguerre polynomials to Bessel functions:
\begin{equation}\label{eq:bela1}
L^\alpha_k(x)=\frac {e^x x^{-\alpha/2}}{k!} \int_0^\infty t^{k+\alpha/2} \, J_\alpha(2\sqrt{xt}) \, e^{-t} \, dt \qquad\forall x>0,
\end{equation}
where $\alpha \in (-1,\infty)$ \cite[(4.19.3)]{lebedev}.
In order to avoid technicalities, let us concentrate on the case where $n=1$;  we shall later indicate the straightforward changes in the argument  which are needed to deal with  general $n\geq 1$.

If $f(z)=f_0(|z|)$ is a radial $L^1$-function on $\C$, then one may use the orthonormal basis of Hermite functions $h_k$ ($k\in \N$)  of $L^2(\R)$ to represent the operator $\rho(f)$ as an infinite diagonal
 matrix, with diagonal elements given by
\begin{equation}\label{eq:diag1}
\tilde f(k):=\langle \rho(f) h_k,h_k\rangle = \int_{\C} f(z) \, \chi_k(z) \, dz \qquad\forall k\in\N,
\end{equation}
where
$\chi_k$ is the Laguerre function
\[
\chi_k(z)=e^{-(\pi/2)|z|^2}L^0_k(\pi |z|^2).
\]
(see \cite[(1.45) and (1.104)]{Folland-1989}; see also \cite[(1.4.32)]{thangavelu_1998}). In particular,
\begin{equation}\label{eq:weyl_schatten_rad}
\|\rho(f)\|_{\Sch^q}=\|\tilde f\|_{\ell^q}
\end{equation}
for all $q \in [1,\infty]$.

Recall also  that the Euclidean Fourier transform of any radial $L^1$-function $g$ on $\C \cong \R^2$ can be written in polar coordinates as
\begin{equation}\label{eq:hatrad}
\hat g(\zeta)=2\pi \int_0^\infty g_0(r) \, J_0(2\pi |\zeta| r) r\, dr,
\end{equation}
where $g(z) = g_0(|z|)$.
We assume that $f$ has compact support, and put  $F(z)=e^{(\pi/2) |z|^2} f(z)$.
Since also $\hat F$ is radial, we may write $\hat F(\zeta)=\hat F_0(|\zeta|).$
Combining  \eqref{eq:bela1} and \eqref{eq:hatrad},   we obtain
\begin{equation}\label{eq:tk1}
\tilde f(k)=\int_0^\infty \hat F_0\big(\sqrt{t/\pi}\big)\,  \frac {t^k}{k!} e^{-t} \,dt,
\end{equation}
which can be re-written as
\begin{equation}\label{eq:tk2}
\tilde f(k)=\int_{\C} \hat F(\zeta)  \frac {\pi^k|\zeta|^{2k}}{k!} e^{-\pi|\zeta|^2} \,d\zeta=\int_{\C}\hat F(\zeta) \, d\mu_k(\zeta),
\end{equation}
 where the measures
$d\mu_k$, $k\in \N$,  are probability measures on $\C$.
Combining the aforementioned Plancherel identity for the Weyl transform, which leads to
\[
\sum_{k\in \N}\left|\int_{\C}\hat F(\zeta) \, d\mu_k(\zeta) \right|^2 = \|f\|_2^2 \leq \|F\|_2^2 =\|\hat F\|_2^2,
\]
with the trivial estimate
\[
\sup_{k\in\N} \left|\int_{\C}\hat F(\zeta) \, d\mu_k(\zeta) \right| \leq \|\hat F\|_\infty,
\]
we see that from here on we can easily modify the argument in the proof of Theorem \ref{thm:local-central-HY-compact-Lie} in order to arrive at \eqref{eq:hyweyl2}.

Indeed, an even simpler interpolation argument is possible here, which avoids any smallness assumption on the support of $f$.
For suitable functions $\phi$ on the positive real line, let us write
\[
\breve \phi(k)=\int_0^\infty \phi(t) \, \frac{t^k}{k!} \, e^{-t} \,dt
\]
for all $k \in \Z$. We claim that
\begin{equation}\label{eq:pqest}
\|\breve \phi\|_{\ell^q}\le \|\phi\|_{L^q(\R^+, dt)}
\end{equation}
for all $q \in [1,\infty]$. Indeed, this estimate is trivial for $q=\infty$,  since the $\frac{t^k}{k!} \, e^{-t} \,dt$ are probability measures, and for $q=1$ we may estimate as follows:
\[
\sum_{k=0}^\infty|\breve \phi(k)|\le\int_0^\infty |\phi(t)| \sum_{k=0}^\infty \frac{t^k}{k!} \, e^{-t} \,dt =\|\phi\|_1.
\]
Thus, \eqref{eq:pqest} follows by Riesz--Thorin interpolation.
From \eqref{eq:pqest} and \eqref{eq:tk1},
\begin{align*}
\|\tilde f \|_{\ell^q}\le \left(\int _0^\infty \left| \hat F_0\big(\sqrt{t/\pi}\big) \right|^q \, dt\right)^{1/q}=\|\hat F\|_q,
\end{align*}
and thus, by \eqref{eq:weyl_schatten_rad} and the sharp Hausdorff--Young inequality on $\R^2$, we obtain
\[
\|\rho(f)\|_{\Sch^{p'}}=\|\tilde f \|_{\ell^{p'}}\le (B_p)^2 \|F\|_p,
\]
whence \eqref{eq:hyweyl2} follows.

Let us finally indicate the changes needed to deal with the case of arbitrary $n$.
The Laguerre functions must be replaced by the $n$-fold tensor products
\[
\chi_k(z_1,\dots, z_n)=\chi_{k_1}(z_1) \dots \chi_{k_1}(z_1),\]
where $k=(k_1,\dots,k_n)\in \N^n$, and thus, in place of \eqref{eq:diag1},
\[
\tilde f(k)=\int_{\C^{n}} f(z_1,\dots, z_n) \,\chi_k(z_1,\dots, z_n)\, dz_1\dots dz_n
\]
where $k\in \N^n$.
Accordingly, the measures $d\mu_k$ must be replaced by the $n$-fold tensor products
$d\mu_k=d\mu_{k_1}\otimes \dots \otimes d\mu_{k_n}$, which are again probability measures, and so on.
It then becomes evident that the proof carries over without any difficulty to this general case.
\end{proof}

\begin{remark}\label{rem:newideasneeded}
There are indications that it may not be possible to establish \eqref{eq:hyweyl2} without the presence of the weight $e^{(\pi/2) |\cdot|^2}$ by means of a reduction to the Euclidean Fourier transform and the Babenko--Beckner estimate, and that new techniques are required.
Let us again restrict our discussion for simplicity to the case $n=1$.

There is another interesting identity relating Laguerre functions and Bessel functions, namely
\[
e^{-x/2} x^{\alpha/2}L^\alpha_k(x)=\frac{(-1)^k}2  \int_0^\infty  J_\alpha(\sqrt{xy}) \, e^{-y/2} \, y^{\alpha/2} \, L_k^\alpha(y) \,dy \qquad \forall x>0,
\]
where $\alpha \in (-1,\infty)$ \cite[(4.20.3)]{lebedev}.
For $\alpha=0$,  this in combination with \eqref{eq:hatrad} implies the well-known identity
\begin{equation}\label{eq:ch_ft}
\chi_k(z)=e^{-(\pi/2)|z|^2} L^0_k(\pi |z|^2)= \frac{(-1)^k}{2} \widehat{\chi_k}(z/2)
\end{equation}
(see \cite[Remark after Theorem (1.105)]{Folland-1989}, which is based on a more conceptual approach based on the Wigner transform).
This easily leads to  the identity
\begin{equation}\label{eq:tk3}
\tilde f(k)=\int_{\C} \hat f(\zeta) \,  (-1)^k \, 2 \, \chi_k(2\zeta)\, d\zeta=\int_{\C} \hat f(\zeta) \,d\nu_k(\zeta).
\end{equation}
In contrast with \eqref{eq:tk2}, the signed measure $d\nu_k$ oscillates when $k \geq 1$ and is no longer a probability measure.
Indeed, by \cite[Lemma 1]{markett}, we have
\[
\|\chi_k\|_1\sim k^{1/2}\quad \mbox{as}\quad  k\to \infty.
\]
Thus we cannot use \eqref{eq:tk3} in place of \eqref{eq:tk2} as before in order to get a sharp Hausdorff--Young estimate for $\rho(f)$ without a weight.

Even the case where $p'=2m$ for some $m \in \Z$ does not seem to allow one to reduce to the Euclidean estimate. Indeed, note that, for all $f \in L^1(\C^n)$,
\begin{equation}\label{eq:twisted_weyl}
\rho(f^*) = \rho(f)^* \qquad\text{and}\qquad \rho(f) \, \rho(g)=\rho(f\times g),
\end{equation}
where $f^*(z) = \overline{f(-z)}$ and $f\times g$ denotes the twisted convolution of $f$ and $g$, that is,
\begin{equation}\label{eq:twisted}
f \times g(z) = \int_{\C^n} f(z-w) \, g(w) \, e^{\pi i \Im (\bar z \cdot w)} \,dw
\end{equation}
\cite[(1.32)]{Folland-1989}.
In particular, if $f$ is radial and real-valued, then $f = f^*$ and therefore
\[
\|\rho(f)\|_{\Sch^{2m}}^{2m}=\|\rho(f)^m\|_{\HS}^2 =\| \rho(f\times \cdots \times f)\|_\HS^2 =\|f\times \cdots \times f\|_2^2,
\]
 with $m$ factors $f$.
A reduction to the sharp estimate for the Euclidean Fourier transform $\hat f$ of $f$ would therefore require the validity of an estimate of the form
\begin{equation}\label{nored}
\|f\times \cdots \times f\|_2\le \|\hat f\|_{2m}^m=\|f* \cdots * f\|_2,
\end{equation}
where $*$ denotes the Euclidean convolution.
However this estimate is false, even when $m=2$.

Indeed, it is sufficient to test the estimate \eqref{nored} when $f = \chi_k$.
Note that, from \eqref{eq:diag1} and the orthogonality of Laguerre polynomials,
\[
\tilde\chi_k(l) = \langle \rho(\chi_k) h_l, h_l \rangle = \langle \chi_k, \chi_l \rangle = \delta_{kl}.
\]
In particular $\rho(\chi_k \times \chi_k) = \rho(\chi_k)$, that is,
\[
\chi_k \times \chi_k = \chi_k.
\]
Therefore
\[
\|\chi_k \times \chi_k\|_2 = \|\chi_k\|_2 = 1,
\]
while
\[
\|\hat\chi_k\|_4 = 2^{-1/2} \|\chi_k\|_4 \sim k^{-1/4} (\log k)^{1/4} \quad\text{as } k \to \infty,
\]
by \eqref{eq:ch_ft} and \cite[Lemma 1]{markett}.
This shows that \eqref{nored} cannot hold when $m=2$ and for all radial real-valued functions $f$ (not even with some constant larger than one multiplying the right-hand side).
\end{remark}

In order to conclude the proof of Theorem \ref{thm:local-HY-weyl}, we need to prove the lower bound
\begin{equation}\label{eq:weyl_lowerbd}
W^\loc_K(\C^n) \geq (B_{p})^{2n}
\end{equation}
for any compact subgroup $K$ of $\group{U}(n)$. As we will see, this can be done much as in Section \ref{s:FLq}. For a function $f \in L^1(\C^n) + L^2(\C^n)$, let $T_f$ denote the operator of twisted convolution on the left by $f$, that is,
\[
T_f \phi = f \times \phi.
\]
In analogy with Proposition \ref{prp:fouriernorm}, we can characterise the Schatten norms of Weyl transforms $\rho(f)$ as follows.

\begin{proposition}
For all $q \in [2,\infty]$ and $f \in C_c(\C^n)$,
\[
\|\rho(f)\|_{\Sch^q}^q = \||T_f|^q\|_{L^1(\C^n) \to L^\infty(\C^n)}.
\]
\end{proposition}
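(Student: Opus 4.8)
The plan is to reproduce the argument of Proposition~\ref{prp:fouriernorm}, with the group von Neumann algebra replaced by the von Neumann algebra generated by the twisted convolution operators $T_f$, which the Weyl transform identifies with an algebra of left multiplication operators. By the Plancherel identity \eqref{eq:weyl_plancherel} and the fact that $\rho$ maps $L^2(\C^n)$ onto the Hilbert--Schmidt class \cite[Theorem (1.30)]{Folland-1989}, the map $U \colon g \mapsto \rho(g)$ is a unitary isomorphism of $L^2(\C^n)$ onto $\HS(L^2(\R^n))$. For a bounded operator $A$ on $L^2(\R^n)$ write $M_A$ for the operator of left multiplication by $A$ on $\HS(L^2(\R^n))$; then $A \mapsto M_A$ is a normal unital $*$-homomorphism of $\Lin(L^2(\R^n))$ into $\Lin(\HS(L^2(\R^n)))$. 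First I would check, using the identities \eqref{eq:twisted_weyl} (suitably extended from $L^1$ to $L^2$ by density), that
\[
U \, T_g \, U^{-1} = M_{\rho(g)} \qquad\text{for all } g \in L^2(\C^n);
\]
in particular each $T_g$ with $g\in L^2(\C^n)$ is bounded on $L^2(\C^n)$ and $\|T_g\|_{L^2 \to L^2} = \|\rho(g)\|_{\Sch^\infty}$, which already settles the case $q = \infty$.

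Since $A \mapsto M_A$ is a $*$-homomorphism, it commutes with the continuous functional calculus, so $|M_A|^q = M_{|A|^q}$ for every bounded $A$ and every $q \in [1,\infty)$; hence $U \, |T_f|^q \, U^{-1} = M_{|\rho(f)|^q}$. Now for $f \in C_c(\C^n)$ one has $\rho(f) \in \HS(L^2(\R^n))$ with $\|\rho(f)\|_{\Sch^\infty} \le \|f\|_{L^1} < \infty$, so $|\rho(f)|^2 = \rho(f)^*\rho(f)$ is trace class and, because $q/2 \ge 1$, so is $|\rho(f)|^q = (|\rho(f)|^2)^{q/2}$; in particular $|\rho(f)|^q \in \HS(L^2(\R^n))$. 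By surjectivity of $\rho$ there is then a unique $g \in L^2(\C^n)$ with $\rho(g) = |\rho(f)|^q$, and by the intertwining relation $M_{|\rho(f)|^q} = M_{\rho(g)} = U \, T_g \, U^{-1}$, so that
\[
|T_f|^q = T_g .
\]
Note that $\rho(g) \ge 0$, that is, $g$ is of positive type with respect to twisted convolution.

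To conclude I would show, exactly as in Proposition~\ref{prp:fouriernorm}, that $\|g\|_{L^\infty} = g(0) = \|\rho(f)\|_{\Sch^q}^q$ and that $\|T_g\|_{L^1(\C^n)\to L^\infty(\C^n)} = \|g\|_{L^\infty}$. For the first point, write $\rho(g)^{1/2} = \rho(h)$ with $h \in L^2(\C^n)$; self-adjointness of $\rho(g)^{1/2}$ together with \eqref{eq:twisted_weyl} forces $h = h^*$, whence $g = h^* \times h$ and
\[
g(z) = \int_{\C^n} \overline{h(w-z)}\, h(w)\, e^{\pi i \Im(\bar z \cdot w)} \,dw .
\]
This is a continuous function of $z$ with $g(0) = \|h\|_{L^2}^2 = \|\rho(h)\|_{\HS}^2 = \trace \rho(g) = \|\rho(f)\|_{\Sch^q}^q$, and $|g(z)| \le \|h\|_{L^2}^2 = g(0)$ for all $z$ by the Cauchy--Schwarz inequality; hence $\|g\|_{L^\infty} = g(0)$. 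For the second point, $T_g = |T_f|^q$ is the integral operator on $\C^n$ with kernel $k_g(z,w) = g(z-w) e^{\pi i \Im(\bar z\cdot w)}$ (see \eqref{eq:twisted}), and the $L^1 \to L^\infty$ operator norm of an integral operator with essentially bounded kernel is the essential supremum of its kernel, here $\sup_{z,w} |g(z-w)| = \|g\|_{L^\infty}$. Combining the two points gives
\[
\bigl\| |T_f|^q \bigr\|_{L^1(\C^n) \to L^\infty(\C^n)} = \|T_g\|_{L^1(\C^n)\to L^\infty(\C^n)} = \|g\|_{L^\infty} = g(0) = \|\rho(f)\|_{\Sch^q}^q ,
\]
as required.

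The bulk of the work, and the only genuinely delicate point, is the second paragraph: verifying that $|\rho(f)|^q$ again lies in the range of $\rho|_{L^2(\C^n)}$ --- equivalently, that it equals twisted convolution against a bona fide (positive-type, hence bounded and continuous) function $g$ --- and carrying the polar decomposition, the power $(\cdot)^q$, and the identification of $T_g$ with an integral operator cleanly through the homomorphism $A \mapsto M_A$ and the unitary $U$. This is precisely the analogue, for the Weyl transform, of the noncommutative-integration facts \eqref{eq:LqL1}--\eqref{eq:LqFT} invoked in Proposition~\ref{prp:fouriernorm}; once it is in place, the remaining steps are routine computations.
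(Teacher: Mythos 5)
Your argument is correct, and its skeleton is the same as the paper's: both proofs use that $\rho$ is unitary from $L^2(\C^n)$ onto $\HS(L^2(\R^n))$ and intertwines twisted convolution with operator multiplication, so that the resulting $*$-isomorphism carries the functional calculus of $T_f$ to that of $\rho(f)$, and both then realise the relevant power of $|T_f|$ as a twisted convolution operator $T_g$ with $g \in L^2(\C^n)$ and finish by reading off an operator norm from the explicit kernel $g(z-w)e^{\pi i \Im(\bar z\cdot w)}$. The one genuine divergence is the final step. The paper takes $\rho(g)=|\rho(f)|^{q/2}$ (the half power), writes $\||T_f|^q\|_{L^1\to L^\infty}=\|T_g^2\|_{L^1\to L^\infty}=\|T_g\|^2_{L^1\to L^2}$ using nonnegativity of $T_g$, and computes $\|T_g\|_{L^1\to L^2}=\|g\|_2=\|\rho(g)\|_{\HS}$; this needs only the $L^2$ norm of $g$ and no continuity or boundedness of $g$. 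You instead take $\rho(g)=|\rho(f)|^{q}$ (the full power) and argue that $g$ is of twisted positive type, hence continuous and bounded with $\|g\|_\infty=g(0)=\trace\rho(g)=\|\rho(f)\|_{\Sch^q}^q$, and that $\|T_g\|_{L^1\to L^\infty}=\|g\|_\infty$; this mirrors more literally the group-case Proposition \ref{prp:fouriernorm} (where positivity of the kernel gives $\|g\|_{A(G)}=g(e)=\|g\|_\infty$), at the price of justifying the factorisation $g=h^*\times h$ via the extension of $\rho(h_1\times h_2)=\rho(h_1)\rho(h_2)$ and $\rho(h^*)=\rho(h)^*$ from $L^1$ to $L^2$ — which is legitimate, since twisted convolution maps $L^2\times L^2$ into $L^2\cap C_b$ and the identities pass to the limit. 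So both routes work; the paper's is marginally leaner, yours makes the analogy with the group-von-Neumann-algebra proof more transparent.
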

\begin{proof}
From the Plancherel formula \eqref{eq:weyl_plancherel} for the Weyl transform, together with \eqref{eq:twisted_weyl},
it is easily seen that, for all $f \in L^1(G)$,
\[
\|\rho(f)\|_{L^2(\R^n)\to L^2(\R^n)} = \|T_f\|_{L^2(\C^n) \to L^2(\C^n)}.
\]
This corresponds to the well-known fact that the norm of a linear operator on $L^2(\R^n)$ is the same as the norm of the corresponding left-multiplication operator on $\HS(L^2(\R^n))$. Note, moreover, that the analogue of \eqref{eq:twisted_weyl} holds:
\[
T_{f^*} = T_f^* \qquad\text{and}\qquad T_{f \times g} = T_f T_g.
\]
Hence the correspondence $\rho(f) \mapsto T_f$ induces an isometric $*$-isomorphism between $\Lin(L^2(\R^n))$ and the von Neumann algebra of operators on $L^2(\C^n)$ generated by $\{ T_f \tc f \in L^1(\C^n) \}$.

Take now $f \in C_c(\C^n)$. Then $\rho(f) \in \Sch^q(\C^n)$ and
\[
\| \rho(f) \|_{\Sch^q(\C^n)}^q = \| |\rho(f)|^{q/2} \|_{\HS}^2.
\]
Since $|\rho(f)|^{q/2} \in \HS(L^2(\R^n))$, by the Plancherel theorem for the Weyl transform there exists $g \in L^2(\C^n)$ such that
\[
\rho(g) = |\rho(f)|^{q/2}.
\]
Since isomorphisms between von Neumann algebras preserve the polar decomposition and the functional calculus, 
\[
T_g = |T_f|^{q/2}.
\]
In order to conclude, then it is enough to show that
\[
\|\rho(g)\|_{\HS}^2 = \|T_g^2\|_{L^1(\C^n) \to L^\infty(\C^n)}.
\]
On the other hand, $T_g = |T_f|^{q/2}$ is a nonnegative self-adjoint operator, so
\[
\|T_g^2\|_{L^1(\C^n) \to L^\infty(\C^n)} = \|T_g\|^2_{L^1(\C^n) \to L^2(\C^n)}
\]
and, according to \eqref{eq:twisted}, $T_g$ is an integral operator with kernel $\tilde K_g$ given by
\[
\tilde K_g(z,w) = g(z-w) \, e^{\pi i \Im(\bar z \cdot w)},
\]
whence
\[
\|T_g\|_{L^1(\C^n) \to L^2(\C^n)} = \esssup_{w \in \C^n} \|\tilde K_g(\cdot,w)\|_2 = \|g\|_2 = \|\rho(g)\|_{\HS},
\]
and we are done.
\end{proof}

Given the above characterisation, the proof of the inequality \eqref{eq:weyl_lowerbd} proceeds, much as in Section \ref{s:FLq}, via a ``blow-up'' argument. The main observation here is that, if $S_\lambda$ denotes the $L^1$-isometric scaling on $\C^n$,
\[
S_\lambda f(z) = \lambda^{-2n} f(z/\lambda),
\]
then
\[
(S_\lambda f) \times (S_\lambda g) = S_\lambda(f \times_\lambda g),
\]
where
\[
f \times_\lambda g(z) = \int_{\C^n} f(z-w) \, g(w) \, e^{\pi i \lambda^2 \Im (\bar z \cdot w)} \,dw;
\]
moreover, from the above formula it is clear that, as $\lambda \to 0$, the scaled twisted convolution $\times_\lambda$ tends to the standard convolution on $\C^n \cong \R^{2n}$ (see also \cite{cowling_1981}). Following this idea, it is not difficult to prove the analogues of Lemma \ref{lem:nu_localisation} and Proposition \ref{prop:nu_loc}, where the twisted convolution $\times$ and the standard convolution on $\C^n$ take the place of the convolutions on the Lie group and the Lie algebra respectively. In addition, the action of $\group{U}(n)$ on functions on $\C^n$ commutes with the scaling operators $S_\lambda$ and the twisted convolution, so the analogue of Remark \ref{rmk:symmetry} applies here. We leave the details to the interested reader.

\begin{remark}
Given the noncommutative subject of this paper, it is natural to ask whether the best constants $H_p(G), H^\loc_p(G),\dots$ are the same in the category of operator spaces (that is, quantized or noncommutative Banach spaces). To be more precise, let us equip the (commutative and noncommutative) $L^q$-spaces involved in the corresponding Hausdorff--Young inequality with their natural operator space structures \cite{Pisier-2003}. Does the complete $L^p \to L^{p'}$ norm of the Fourier transform coincide with the corresponding norm $H_p(G)$ in the category of Banach spaces? In the Euclidean case of $H_p(\R^n)$, this problem was asked by Pisier in 2002 to the fourth-named author, but it is still open. \'Eric Ricard recently noticed that such a result for the Euclidean Fourier transform (that is, its completely bounded norm is still given by the Babenko--Beckner constant raised to the dimension of the underlying space) would give the expected constants for the Weyl transform in CCR algebras and, therefore, also for the Fourier transform in the Heisenberg group. Unfortunately, Beckner's original strategy crucially uses hypercontractivity, which has been recently proved to fail in the completely bounded setting \cite{AAA}. In conclusion, the above discussion indicates one more time (see Remark \ref{rem:newideasneeded}) that some new ideas seem to be necessary to solve these questions.
\end{remark}

\newcommand{\arttitle}[1]{\lq #1\rq,}
\newcommand{\journal}[1]{\textit{#1} }
\newcommand{\jvolume}[1]{\textbf{{#1}}}
\newcommand{\booktitle}[1]{\emph{{#1}.}}
\providecommand{\bysame}{\leavevmode\hbox to3em{\hrulefill}\thinspace}


\begin{thebibliography}{99}

\bibitem{And-PhD}
M.~E. Andersson,
\booktitle{The Hausdorff--Young inequality and Fourier type}
Ph.D.\ Thesis, Uppsala, 1993.

\bibitem{And-1994}
M.~E. Andersson,
\arttitle{Local variants of the Hausdorff--Young inequality}
pages 25--34 in:
\booktitle{Analysis, algebra, and computers in mathematical research, ({L}ule\aa, 1992)}
Lecture Notes in Pure and Appl. Math., vol.\ 156.
Dekker, New York, 1994.

\bibitem{Bab-61}
K.~I. Babenko,
\arttitle{An inequality in the theory of Fourier integrals}
\journal{Izv. Akad. Nauk SSSR Ser. Mat.} \jvolume{25} (1961), 531---542 (Russian); translated as
\journal{Amer. Math. Soc. Transl. Ser. 2} \jvolume{44} (1962), 115--128.

\bibitem{Baklouti-Ludwig-Scuto-Smaoui-2007}
A. Baklouti, J. Ludwig, L. Scuto, and K. Smaoui,
\arttitle{Estimate of the $L^p$-Fourier transform norm on strong $\ast$-regular exponential solvable Lie groups}
\journal{Acta Math. Sin. (Engl. Ser.)} \jvolume{23} (2007), 1173--1188.

\bibitem{AAA}
I. Bardet and C. Rouz\'e,
\arttitle{Hypercontractivity and logarithmic Sobolev inequality for nonprimitive quantum Markov semigroups and estimation of decoherence rates}
preprint (2018), \texttt{arXiv:1803.05379}.

\bibitem{Bec-1975}
W. Beckner,
\arttitle{Inequalities in Fourier analysis}
\journal{Ann. of Math. (2)} \jvolume{102} (1975), 159--182.

\bibitem{BBBCF_2018}
J. Bennett, N. Bez, S. Buschenhenke, M.~G. Cowling, and T.~C. Flock,
\arttitle{On the nonlinear Brascamp--Lieb inequality}
preprint (2018), \texttt{arXiv:1811.11052v1}.

\bibitem{bergh_interpolation_1976}
J. Bergh and J. L\"ofstr\"om,
\booktitle{Interpolation spaces. An introduction}
Springer--Verlag, Berlin, 1976.

\bibitem{brascamp_best_1976}
H.~J. Brascamp and E.~H. Lieb,
\arttitle{Best constants in {Y}oung's inequality, its converse, and its generalization to more than three functions}
\journal{Adv. Math.} \jvolume{20} (1976), 151--173.

\bibitem{brezis}
H. Brezis,
\booktitle{Functional analysis, Sobolev spaces and partial differential equations}
Universitext.
Springer, New York, 2011.

\bibitem{BtD_1985}
T. Br\"ocker and T. tom Dieck,
\booktitle{Representations of compact Lie groups}
Graduate Texts in Mathematics, vol.\ 98.
Springer--Verlag, New York, 1985.

\bibitem{carcano_1987}
G. Carcano,
\arttitle{A commutativity condition for algebras of invariant functions}
\journal{Boll. Un. Mat. Ital. B (7)} \jvolume{1} (1987), 1091--1105.

\bibitem{caspers_2013}
M. Caspers,
\arttitle{The $L^p$-Fourier transform on locally compact quantum groups}
\journal{J. Operator Theory} \jvolume{69} (2013), 161--193.

\bibitem{Connes-1980}
A. Connes,
\arttitle{On the spatial theory of von Neumann algebras}
\journal{J. Funct. Anal.} \jvolume{35} (1980), 153--164.

\bibitem{cooney_2010}
T. Cooney,
\arttitle{A Hausdorff--Young inequality for locally compact quantum groups}
\journal{Internat. J. Math.} \jvolume{21} (2010), 1619--1632.

\bibitem{cowling_1981}
M.~G. Cowling,
\arttitle{A remark on twisted convolution}
\journal{Proceedings of the Seminar on Harmonic Analysis (Pisa, 1980). Rend. Circ. Mat. Palermo (2)} (1981), 203--209.

\bibitem{daws_2011}
M. Daws,
\arttitle{Representing multipliers of the Fourier algebra on non-commutative $L^p$ spaces}
\journal{Canad. J. Math.} \jvolume{63} (2011), 798--825.

\bibitem{dixmier_1953}
J. Dixmier,
\arttitle{Formes lin\'eaires sur un anneau d'op\'erateurs}
\journal{Bull. Soc. Math. France} \jvolume{81} (1953), 9--39.

\bibitem{duflo_moore_1976}
M. Duflo and C.~C. Moore,
\arttitle{On the regular representation of a nonunimodular locally compact group}
\journal{J. Funct. Anal.} \jvolume{21} (1976), 209--243.

\bibitem{Eymard-1964}
P. Eymard,
\arttitle{L'alg\`ebre de Fourier d'un groupe localement compact}
\journal{Bull. Soc. Math. France} \jvolume{92} (1964), 181--236.

\bibitem{Eymard-Terp-1979}
P. Eymard and M. Terp,
\arttitle{La transformation de Fourier et son inverse sur le groupe des $ax+b$ d'un corps local}
pages 207--248 in:
\booktitle{Analyse harmonique sur les groupes de Lie (S\'em. Nancy-Strasbourg 1976-1978), II}
Lecture Notes in Math. 739.
Springer, Berlin, 1979.

\bibitem{Folland-1989}
G.~B. Folland,
\booktitle{Harmonic analysis in phase space}
Annals of Math. Studies, vol.\ 122.
Princeton Univ. Press, Princeton, 1989.

\bibitem{folland_course_1995}
G.~B. Folland,
\booktitle{A course in abstract harmonic analysis}
Studies in Advanced Mathematics.
CRC Press, Boca Raton, FL, 1995.

\bibitem{forrest_2011}
B.~E. Forrest, H.~H. Lee, and E. Samei,
\arttitle{Projectivity of modules over Fourier algebras}
\journal{Proc. Lond. Math. Soc. (3)} \jvolume{102} (2011), 697--730.

\bibitem{fournier_1977}
J.~J.~F. Fournier,
\arttitle{Sharpness in Young's inequality for convolution}
\journal{Pacific J. Math.} \jvolume{72} (1977), 383--397.

\bibitem{Fuehr_2005}
H. F\"uhr,
\booktitle{Abstract harmonic analysis of continuous wavelet transforms}
Lecture Notes in Mathematics, vol.\ 1863.
Springer--Verlag, Berlin, 2005.

\bibitem{Fuehr_2006}
H. F\"uhr,
\arttitle{Hausdorff-Young inequalities for group extensions}
\journal{Canad. Math. Bull.} \jvolume{49} (2006), 549--559.

\bibitem{GCMP-2003}
J. Garc\'ia-Cuerva, J.~M. Marco, and J. Parcet,
\arttitle{Sharp Fourier type and cotype with respect to compact semisimple Lie groups}
\journal{Trans. Amer. Math. Soc.} \jvolume{355} (2003), 3591--3609.

\bibitem{GCP-2004}
J. Garc\'ia-Cuerva and J. Parcet,
\arttitle{Vector-valued Hausdorff--Young inequality on compact groups}
\journal{Proc. London Math. Soc. (3)} \jvolume{88} (2004), 796--816.

\bibitem{Hilsum-1981}
M. Hilsum,
\arttitle{Les espaces $L^p$ d'une alg\`ebre de von Neumann d\'efinies par la deriv\'ee spatiale}
\journal{J. Funct. Anal.} \jvolume{40} (1981), 151--169.

\bibitem{hytonen_2016}
T. Hyt\"onen, J. van Neerven, M. Veraar, and L. Weis,
\booktitle{Analysis in Banach spaces. Vol. I. Martingales and Littlewood-Paley theory}
Ergebnisse der Mathematik und ihrer Grenzgebiete, vol.\ 63.
Springer--Verlag, Cham, 2016.

\bibitem{Inoue-1992}
J. Inoue,
\arttitle{$L^p$-Fourier transforms on nilpotent Lie groups and solvable Lie groups acting on Siegel domains}
\journal{Pacific J. Math.} \jvolume{155} (1992), 295--318.

\bibitem{Izumi-1997}
H. Izumi,
\arttitle{Constructions of non-commutative $L^p$-spaces with a complex parameter arising from modular actions}
\journal{Internat. J. Math.} \jvolume{8} (1997), 1029--1066.

\bibitem{Izumi-1998}
H. Izumi,
\arttitle{Natural bilinear forms, natural sesquilinear forms and the associated duality on non-commutative $L^p$-spaces}
\journal{Internat. J. Math.} \jvolume{9} (1998), 975--1039.

\bibitem{Kam-2000}
A. Kamaly,
\arttitle{A new local variant of the Hausdorff--Young inequality}
pages 107--130 in:
\booktitle{Complex analysis and related topics}
Edited by E. Ram\'irez de Arellano, M.~V. Shapiro, L.M. Tover, and N.L. Vasilevski.
Operator Theory Advances and Applications 114.
Birkh\"auser Verlag, Basel, 2000.

\bibitem{KST-1982}
C.~E. Kenig, R.~J. Stanton, and P.~A. Tomas,
\arttitle{Divergence of eigenfunction expansions}
\journal{J. Funct. Anal.} \jvolume{46} (1982), 28--44.

\bibitem{Kir}
A.~A. Kirillov,
\arttitle{Merits and demerits of the orbit method}
\journal{Bull. (N. S.) Amer. Math. Soc.} \jvolume{36} (1999), 433--488.

\bibitem{KR-1978}
A. Klein and B. Russo,
\arttitle{Sharp inequalities for Weyl operators and Heisenberg groups}
\journal{Math. Ann.} \jvolume{235} (1978), 175--194.

\bibitem{kleppner_lipsman_1972}
A. Kleppner and R.~L. Lipsman,
\arttitle{The Plancherel formula for group extensions. I}
\journal{Ann. Sci. \'Ecole Norm. Sup. (4)} \jvolume{5} (1972), 459--516.

\bibitem{kleppner_lipsman_1973}
A. Kleppner and R.~L. Lipsman,
\arttitle{The Plancherel formula for group extensions. II}
\journal{Ann. Sci. \'Ecole Norm. Sup. (4)} \jvolume{6} (1973), 103--132.

\bibitem{Knapp-2002}
A.~W. Knapp,
\booktitle{Lie groups beyond an introduction}
Progress in Mathematics, vol.\ 140.
Birk\"auser, Boston, 2002.

\bibitem{Kosaki-1984}
H. Kosaki,
\arttitle{Applications of the complex interpolation method to a von Neumann algebra: noncommutative $L^p$-spaces}
\journal{J. Funct. Anal.} \jvolume{56} (1984), 29--78.

\bibitem{Kun-1958}
R.A. Kunze,
\arttitle{$L^p$ Fourier transforms on locally compact unimodular groups}
\journal{Trans. Amer. Math. Soc.} \jvolume{89} (1958), 519--540.

\bibitem{lebedev}
N.N. Lebedev,
\booktitle{Special functions and their applications}
Revised edition, translated from the Russian and edited by R.A. Silverman.
Dover Publications, New York, 1972.

\bibitem{leptin}
H. Leptin,
\arttitle{Sur l'alg\`ebre de Fourier d'un groupe localement compact}
\journal{C. R. Acad. Sci. Paris S\'er. A} \jvolume{266} (1968), 1180--1182.

\bibitem{lieb_1990}
E.~H. Lieb,
\arttitle{Gaussian kernels have only Gaussian maximizers}
\journal{Invent. Math.} \jvolume{102} (1990), 179--208.

\bibitem{lipsman_1974}
R.~L. Lipsman,
\arttitle{Non-abelian Fourier analysis}
\journal{Bull. Sci. Math. (2)} \jvolume{98} (1974), 209--233.

\bibitem{markett}
C. Markett,
\arttitle{Mean Ces\`aro summability of Laguerre expansions and norm estimates with shifted parameter}
\journal{Anal. Math.} \jvolume{8} (1982), 19--37.

\bibitem{martini_joint_2017}
A. Martini,
\arttitle{Joint functional calculi and a sharp multiplier theorem for the Kohn Laplacian on spheres}
\journal{Math. Z.} \jvolume{286} (2017), 1539--1574.

\bibitem{mautner_unitary_1950}
F. Mautner,
\arttitle{Unitary representations of locally compact groups. II}
\journal{Ann. of Math. (2)} \jvolume{52} (1950), 528--556.

\bibitem{mitjagin_1974}
B.~S. Mitjagin,
\arttitle{Divergenz von Spektralentwicklungen in $L^p$-R\"aumen}
pages {521--530} in:
\booktitle{Linear operators and approximation, II (Proc. Conf., Oberwolfach Math. Res. Inst., Oberwolfach, 1974)}
Birk\"auser, Basel, 1974.

\bibitem{Par-2006}
J. Parcet,
\arttitle{A local Hausdorff--Young inequality on the classical compact Lie groups and related topics}
pages {1--25} in:
\booktitle{Focus on group theory research}
Nova Sci. Publ., New York, 2006.

\bibitem{Pisier-2003}
G. Pisier,
\booktitle{Introduction to operator space theory}
London Mathematical Society Lecture Note Series, vol.\ 294.
Cambridge University Press, Cambridge, 2003.

\bibitem{Pisier-Xu-2003}
G. Pisier and Q. Xu,
\arttitle{Non-commutative $L^p$-spaces}
pages 1459--1517 in:
\booktitle{Handbook of the geometry of Banach spaces, Vol. 2}
North-Holland, Amsterdam, 2003.

\bibitem{Ricard-Xu-2011}
\'E. Ricard and Q. Xu,
\arttitle{Complex interpolation of weighted noncommutative $L_p$-spaces}
\journal{Houston J. Math.} \jvolume{37} (2011), 1165--1179.

\bibitem{Russo-1974}
B. Russo,
\arttitle{The norm of the $L^p$-Fourier transform on unimodular groups}
\journal{Trans. Amer. Math. Soc.} \jvolume{192} (1974), 293--305.

\bibitem{Russo-1979}
B. Russo,
\arttitle{On the Hausdorff--Young theorem for integral operators}
\journal{Pacific J. Math.} \jvolume{68} (1977), 241--253.

\bibitem{segal-1950}
I.~E. Segal,
\arttitle{An extension of Plancherel's formula to separable unimodular groups}
\journal{Ann. of Math. (2)} \jvolume{52} (1950), 272--292.

\bibitem{segal-1953}
I.~E. Segal,
\arttitle{A non-commutative extension of abstract integration}
\journal{Ann. of Math. (2)} \jvolume{57} (1953), 401--457.

\bibitem{Siebert-1986}
E. Siebert,
\arttitle{Contractive automorphisms on locally compact groups}
\journal{Math. Z.} \jvolume{191} (1986), 73--90.

\bibitem{Sjo-1995}
P. Sj\"olin,
\arttitle{A remark on the Hausdorff--Young inequality}
\journal{Proc. Amer. Math. Soc.} \jvolume{123} (1995), 3085--3088.

\bibitem{tatsuuma_1972}
N. Tatsuuma,
\arttitle{Plancherel formula for non-unimodular locally compact groups}
\journal{J. Math. Kyoto Univ.} \jvolume{12} (1972), 179--261.

\bibitem{terp_1980}
M. Terp,
\arttitle{$L^p$ Fourier transformation on non-unimodular locally compact groups}
\journal{Adv. Oper. Theory} \jvolume{2} (2017), 547--583. Originally appeared in
\journal{Matematisk Institut K\o benhavn. Preprint series} \jvolume{11} (1980).

\bibitem{terp_1982}
M. Terp,
\arttitle{Interpolation spaces between a von Neumann algebra and its predual}
\journal{J. Operator Theory} \jvolume{8} (1982), 327--360.

\bibitem{thangavelu_1998}
S. Thangavelu,
\booktitle{Harmonic analysis on the Heisenberg group}
Progress in Mathematics, vol.\ 159.
Birk\"auser, Boston, 1998.

\bibitem{triebel_1978}
H. Triebel,
\booktitle{Interpolation theory, function spaces, differential operators}
North-Holland Mathematical Library, vol.\ 18.
North-Holland Publishing Co., Amsterdam, 1978.

\bibitem{weidmann}
J. Weidmann,
\booktitle{Linear operators in Hilbert spaces}
Graduate Texts in Mathematics, vol.\ 68.
Springer--Verlag, New York, 1980.

\end{thebibliography}
\end{document}